\renewcommand{\footnoterule}{%
  \kern -3pt
  \hrule width \textwidth height 0.4pt
  \kern 2pt
}
\DeclareFontShape{OMX}{cmex}{m}{b}{<-> cmexb10}{}
\DeclareSymbolFont{boldlargesymbols}{OMX}{cmex}{m}{b}
\DeclareMathAccent{\bwidetilde}{\mathord}{boldlargesymbols}{"65}
\newtheoremstyle{break}
{14pt}{20pt}%
{}{}%
{\bfseries}{\vspace{0.5mm}}%
{\newline}{}%
\theoremstyle{break}
\theoremstyle{plain}
\newtheorem{theorem}{Theorem}[section]
\newtheorem{lemma}[theorem]{Lemma}
\newtheorem{corollary}[theorem]{Corollary}
\theoremstyle{definition}
\newtheorem{definition}[theorem]{Definition}
\theoremstyle{remark}
\newtheorem{remark}[theorem]{Remark}
\definecolor{Cgrey}{rgb}{0.85,0.85,0.85}
\definecolor{Cblue}{rgb}{0.50,0.85,0.85}
\definecolor{Cred}{rgb}{1,0,0}
\definecolor{fancy}{rgb}{0.10,0.85,0.10}
\newcommand\Cbox[2]{%
    \newbox\contentbox%
    \newbox\bkgdbox%
    \setbox\contentbox\hbox to \hsize{%
        \vtop{
            \kern\columnsep
            \hbox to \hsize{%
                \kern\columnsep%
                \advance\hsize by -2\columnsep%
                \setlength{\textwidth}{\hsize}%
                \vbox{
                    \parskip=\baselineskip
                    \parindent=0bp
                    #2
                }%
                \kern\columnsep%
            }%
            \kern\columnsep%
        }%
    }%
    \setbox\bkgdbox\vbox{
        \color{#1}
        \hrule width  \wd\contentbox %
               height \ht\contentbox %
               depth  \dp\contentbox
        \color{black}
    }%
    \wd\bkgdbox=0bp%
    \vbox{\hbox to \hsize{\box\bkgdbox\box\contentbox}}%
    \vskip\baselineskip%
}
\DeclareMathAlphabet{\mathbbm}{U}{bbm}{m}{n}
\newcommand{\softd}{d\hspace{-0.2mm}'}
\renewcommand{\S}{\mathbb{S}}
\newcommand{\Id}{\mathbb{I}}
\newcommand{\Lp}[2]{L^{\raisebox{#2}{\scalebox{0.75}{$#1$}}}}
\newcommand{\dy}{\mathrm{d}\mathbf{y}}
\newcommand{\bs}{\backslash\hspace{0.2mm}}
\renewcommand{\div}{\mathrm{div}}
\newcommand{\hb}{h^\beta}
\newcommand{\hd}{h^\delta}
\newcommand{\he}{h^\varepsilon}
\newcommand{\hee}{\frac{\he}{2}}
\newcommand{\reals}{\mathbb{R}}
\newcommand{\fatu}{\bm{u}}
\newcommand{\fatv}{\bm{v}}
\newcommand{\fatw}{\bm{w}}
\newcommand{\fatx}{\bm{x}}
\newcommand{\faty}{\bm{y}}
\newcommand{\fatf}{\bm{f}}
\newcommand{\fatr}{\bm{r}}
\newcommand{\fats}{\bm{s}}
\newcommand{\fatg}{\bm{g}}
\newcommand{\bvarrho}{\bwidetilde{\varrho}}
\newcommand{\bfatu}{\bwidetilde{\fatu}}
\newcommand{\btheta}{\bwidetilde{\theta}}
\newcommand{\ei}[1]{\mathbf{e}_{#1}}
\newcommand{\fatn}{\bm{n}}
\newcommand{\const}{\mathrm{const}}
\newcommand{\naturals}{\mathbb{N}}
\newcommand{\conv}{\mathrm{conv}}
\newcommand{\p}{p\hspace{0.2mm}}
\newcommand{\sk}{\sum_{K\scalebox{0.75}{$\,\in\,$} \mathcal{T}_h}} 
\newcommand{\gh}{\mathcal{E}_{h}} 
\newcommand{\ghint}{\mathcal{E}_{h,\mathrm{int}}}
\newcommand{\gint}{\mathcal{E}_{\mathrm{int}}}
\newcommand{\ghext}{\mathcal{E}_{h,\mathrm{ext}}} 
\newcommand{\sginth}{\sum_{\sigma\,\in\,\mathcal{E}_{h,\mathrm{int}}}}
\newcommand{\sgkh}{\sum_{\sigma\scalebox{0.75}{$\,\in\,$}\mathcal{E}_h(K)}}
\newcommand{\po}{\partial \hspace{0.1mm} \Omega}
\newcommand{\poh}{\partial \Omega_h} 
\newcommand{\ngamma}{\fatn_\sigma}
\newcommand{\nk}{\fatn_{\scalebox{0.7}{$K$}}} 
\newcommand{\deltat}{\Delta t}
\newcommand{\thetakh}{\theta_{\hspace{-0.3mm}h}^{k}}
\newcommand{\thetakmh}{\theta_{\hspace{-0.3mm}h}^{\hspace{0.3mm}k-1}}
\newcommand{\thetah}{\theta_{\hspace{-0.3mm}h}}
\newcommand{\thetazeroh}{\theta_{\hspace{-0.3mm}h}^{0}}
\newcommand{\thetazero}{\theta_{\hspace{-0.3mm}0}}
\newcommand{\inttinteint}{\intt\!\!\inteint}
\newcommand{\inteint}{\int_{\mathcal{E}_\mathrm{int}}}
\newcommand{\intek}{\int_{\mathcal{E}(K)}}
\newcommand{\intt}{\int^{T}_{0}}
\newcommand{\inttau}{\int_{0}^{\hspace{0.3mm}\tau}}
\newcommand{\intk}{\int_{\hspace{0.2mm}\raisebox{-0.3mm}{\scalebox{0.75}{$\!K$}}}} 
\newcommand{\into}{\int_{\hspace{0.2mm}\raisebox{-0.3mm}{\scalebox{0.75}{$\Omega$}}}}
\newcommand{\intoh}{\int_{\Omega_h}} 
\newcommand{\intgc}{\int_{\raisebox{-0.3mm}{\scalebox{0.75}{$\sigma$}}}} 
\newcommand{\intg}{\intgc} 
\newcommand{\inttinto}{\intt\!\!\into}
\newcommand{\inttintoh}{\intt\!\!\intoh} 
\newcommand{\inttauinto}{\inttau\!\!\into}
\newcommand{\dx}{\mathrm{d}\bm{x}}
\newcommand{\dsx}{\mathrm{d}S_{\bm{x}}}
\newcommand{\dt}{\mathrm{d}t}
\newcommand{\dxdt}{\dx\hspace{0.3mm}\dt}
\newcommand{\dsxdt}{\dsx\hspace{0.3mm}\dt}
\newcommand{\dd}{\mathrm{d}}
\newcommand{\divh}{\mathrm{div}_{\raisebox{-0.3mm}{\hspace{-0.3mm}\scalebox{0.75}{$h$}}}} 
\newcommand{\Dt}{D_{t}}
\newcommand{\gradh}{\nabla_{\!h}} 
\newcommand{\pt}{\partial_t}
\newcommand{\gradx}{\nabla_{\hspace{-0.7mm}\fatx}}
\newcommand{\divx}{\mathrm{div}_{\fatx}}
\newcommand{\chip}{\chi^{\hspace{0.3mm}\prime}}
\newcommand{\bp}{b^{\hspace{0.3mm}\prime}}
\newcommand{\bpp}{b^{\hspace{0.3mm}\prime\prime}}
\newcommand{\Pp}{P^{\hspace{0.3mm}\prime}}
\newcommand{\Ppp}{P^{\hspace{0.3mm}\prime\prime}}
\newcommand{\jump}[1]{\llbracket\hspace{0.2mm}#1\hspace{0.2mm}\rrbracket}
\newcommand{\jumpG}[1]{\llbracket\hspace{0.2mm}#1\hspace{0.2mm}\rrbracket_\sigma}
\newcommand{\Jump}[1]{\left\llbracket\hspace{0.2mm}#1\hspace{0.2mm}\right\rrbracket}
\newcommand{\mean}[1]{\{#1\}}
\newcommand{\meanG}[1]{\{#1\}_\sigma}
\newcommand{\omean}[1]{\langle #1 \rangle}
\newcommand{\smallgmean}[1]{\langle #1 \rangle_\sigma}
\newcommand{\piqh}{\Pi_{Q,\hspace{0.2mm}h}} 
\newcommand{\pivh}{\Pi_{V,\hspace{0.2mm}h}} 
\newcommand{\piq}{\Pi_{Q}} 
\newcommand{\piv}{\Pi_{V}} 
\newcommand{\uppG}[2]{\mathrm{Up}\hspace{-0.5mm}\left[#1\hspace{0.3mm},\hspace{-0.3mm}#2\right]_{\sigma}}
\newcommand{\upp}[2]{\mathrm{Up}\hspace{-0.5mm}\left[#1\hspace{0.3mm},\hspace{-0.3mm}#2\right]}
\newcommand{\up}[2]{F_h^{\,\mathrm{up}}\hspace{-0.5mm}\left[#1\hspace{0.3mm},\hspace{-0.3mm}#2\right]}
\newcommand{\upG}[2]{F_h^{\,\mathrm{up}}\hspace{-0.5mm}\left[#1\hspace{0.3mm},\hspace{-0.3mm}#2\right]_{\sigma}}
\newcommand{\Lnorm}[5]{\left|\left|#1\right|\right|_{\raisebox{-0.5mm}{$\scalebox{0.75}{$L$}^{\scalebox{#4}{\raisebox{#5}{$#2$}}}\hspace{-0.2mm}\scalebox{0.75}{$#3$}$}}}
\newcommand{\lnorm}[5]{||#1||_{\raisebox{-1.3mm}{$\scalebox{0.75}{$L$}^{\scalebox{#4}{\raisebox{#5}{$#2$}}}\hspace{-0.2mm}\scalebox{0.75}{$#3$}$}}}
\newcommand{\wnorm}[5]{||#1||_{\raisebox{-1.3mm}{$\scalebox{0.75}{$W$}^{\scalebox{#4}{\raisebox{#5}{$#2$}}}\hspace{-0.2mm}\scalebox{0.75}{$#3$}$}}}
\newcommand{\expwnorm}[7]{||#1||_{\raisebox{-0.8mm}{$\scalebox{0.75}{$W$}^{\scalebox{#4}{\raisebox{#5}{$#2$}}}\hspace{-0.2mm}\scalebox{0.75}{$#3$}$}}^{\raisebox{#6}{\scalebox{0.75}{$#7$}}}}
\newcommand{\explnorm}[7]{||#1||_{\raisebox{-0.8mm}{$\scalebox{0.75}{$L$}^{\scalebox{#4}{\raisebox{#5}{$#2$}}}\hspace{-0.2mm}\scalebox{0.75}{$#3$}$}}^{\raisebox{#6}{\scalebox{0.75}{$#7$}}}}
\newcommand{\maxnormk}[3]{||#1||_{\raisebox{-1.3mm}{\scalebox{0.75}{$\Ck{#2}#3$}}}}
\newcommand{\Ck}[1]{C^{\hspace{0.3mm}#1}}
\newcommand{\Ckc}[1]{C^{\hspace{0.3mm}#1}_{\raisebox{0.2mm}{\scalebox{0.75}{$c$}}}}
\newcommand{\Cone}{C^{\hspace{0.3mm}1}}
\newcommand{\Cinfty}{C^{\raisebox{0.3mm}{\scalebox{0.6}{$\hspace{0.5mm}\infty$}}}}
\newcommand{\Cinftyc}{C^{\raisebox{0.3mm}{\scalebox{0.6}{$\hspace{0.5mm}\infty$}}}_{\raisebox{0.2mm}{\scalebox{0.75}{$c$}}}}
\newcommand{\Pzero}{P_{\raisebox{-0.5mm}{\scalebox{0.75}{$0$}}}}
\newcommand{\Pone}{P_{\raisebox{-0.5mm}{\scalebox{0.75}{$1$}}}}
\newcommand{\barepsilon}{\overline{\vphantom{\scalebox{1.1}{$\varepsilon$}}\varepsilon}}
\newcommand{\bardelta}{\overline{\vphantom{\scalebox{1.1}{$\delta$}}\delta}}
\renewenvironment{proof}[1][\proofname]{%
   \par\pushQED{\qed}\normalfont%
   \topsep6\p@\@plus6\p@\relax
   \trivlist\item[\hskip\labelsep\itshape\bfseries#1\@addpunct{.}]%
   \ignorespaces
}{%
   \popQED\endtrivlist\@endpefalse
}
\date{}
\begin{document}


\title{Existence of dissipative solutions to the compressible Navier-Stokes system with potential temperature transport\thanks{This work has been funded by the Deutsche Forschungsgemeinschaft (DFG, German Research Foundation) - Project number 233630050 - TRR 146 as well as by TRR 165 Waves to Weather. M.L. gratefully acknowledges support of the Gutenberg Research College of University Mainz. The authors wish to thank E.~Feireisl (Prague) and A.~Novotn\'y (Toulon) for fruitful discussions.}}

\author{M\' aria Luk\' a\v cov\' a\,-Medvi\softd ov\' a
\and Andreas Sch\"omer
}

\date{\today}

\maketitle

\bigskip

\centerline{Institute of Mathematics, Johannes Gutenberg-University Mainz}
\centerline{Staudingerweg 9, 55128 Mainz, Germany}
\centerline{lukacova@uni-mainz.de, anschoem@uni-mainz.de}

\begin{abstract}
We introduce dissipative solutions to the compressible Navier-Stokes system with potential temperature transport motivated by the concept of Young measures. We prove their global-in-time existence by means of convergence analysis of a mixed finite element-finite volume method. If a strong solution to the compressible Navier-Stokes system with potential temperature transport exists, we prove the strong convergence of numerical solutions. Our results hold for the full range of adiabatic indices including the physically relevant cases in which the existence of global-in-time weak solutions is open.
\end{abstract}

{\bf Keywords:} compressible Navier-Stokes system $\bm{\cdot}$ Young measure $\bm{\cdot}$ measure-valued solution $\bm{\cdot}$  finite element scheme $\bm{\cdot}$ finite volume scheme $\bm{\cdot}$ stability $\bm{\cdot}$ consistency $\bm{\cdot}$ convergence


\section{Introduction}\label{sec_intro}

We consider a compressible viscous Newtonian fluid  that is confined to a bounded domain $\Omega\subset\reals^d$, $d\in\{2,3\}$. Its time evolution is governed by the following system:
\begin{align}
    \pt\varrho + \divx(\varrho\fatu) &= 0\phantom{\divx(\S(\gradx\fatu))} \qquad\qquad \text{in $(0,T)\times\Omega$,} \label{cequation} \\[2mm]
    \pt(\varrho\fatu) + \divx(\varrho\fatu\otimes\fatu) + \gradx\hspace{0.2mm} \p(\varrho\theta) &= \divx(\S(\gradx\fatu))\phantom{0} \qquad\qquad \text{in $(0,T)\times\Omega$,} \label{mequation} \\[2mm]
    \pt(\varrho\theta) + \divx(\varrho\theta\fatu) &= 0\phantom{\divx(\S(\gradx\fatu))} \qquad\qquad \text{in $(0,T)\times\Omega$.} \label{zequation}
\end{align}
Here $\varrho\geq 0$, $\fatu$, $p$ and $\theta\geq 0$  stand for the \textit{fluid density}, \textit{velocity},  \textit{pressure}, and \textit{potential temperature}, respectively. The \textit{viscous stress tensor} $\S(\gradx\fatu)$ is given by
\begin{align}
    \S(\gradx\fatu) = \mu\!\left(\gradx\fatu + (\gradx\fatu)^T-\frac{2}{d}\,\divx(\fatu)\Id\right)+\lambda\,\divx(\fatu)\,\Id\,, \label{stress_tensor_0}
\end{align}
where $\mu$ and $\lambda$ are \textit{viscosity constants} satisfying
$    \mu > 0 $ and  $\lambda \geq -\frac{2}{d}\,\mu\, .$
Denoting by $\gamma > 1$ the adiabatic index, the pressure state equation reads
\begin{align}
    \p(\varrho\theta) = a(\varrho\theta)^\gamma\,, \qquad a>0\,. \label{isen_press}
\end{align}
This type of Navier-Stokes equations is often used in meteorological applications; see, e.g., \citep{klein1} and the references therein.
System \eqref{cequation}--\eqref{isen_press}
governs the motion of viscous compressible fluids
with potential temperature, where diabatic processes and the influence of molecular transport on potential
temperature are excluded. Only potential entropy stratification in the initial data is imposed. We refer a reader to
Feireisl et al.~\citep{Klein}, where the singular limit
in the low Mach/Froude number regime of the above Navier-Stokes system with $\gamma > 3/2$ was analyzed. For $\gamma > 9/5$, Bresch et al.~\citep{bresch}  showed that the low Mach number limit for the considered system is the compressible isentropic Navier–Stokes equation.
 In \citep{Lukacova_Wiebe_1} Luká\v{c}ová-Medvi\softd ová et al. use a slightly more complex version of the above system
 as the basis for their cloud model; see also Chertock et al.~\citep{Lukacova_Wiebe_2}, where the uncertainty quantification was investigated.
Due to the link between potential temperature and entropy, system (\ref{cequation})--(\ref{isen_press}) is often reported in the literature as the \textit{Navier-Stokes system with entropy transport}. To avoid any misunderstanding, we call it in the present paper the \textit{Navier-Stokes system with potential temperature transport}.

In literature we can find several existence results for the Navier-Stokes system \eqref{cequation}--\eqref{isen_press}.
The question of stability of weak solutions for $\gamma > 3/2, \, d=3$ was analyzed by Mich\'alek \citep{michalek}; see also \citep{Lions}, where the stability of weak solutions for the compressible Navier-Stokes equations with a scalar transport was studied for $\gamma > 9/5$ by Lions.  Under the assumption $\gamma\geq 9/5$ in the case $d=3$ and $\gamma>1$ in the case $d=2$, system (\ref{cequation})--(\ref{isen_press}) is known to admit global-in-time weak solutions; see Maltese et al.~\citep[Theorem 1 with $\mathcal{T}(s)=s^\gamma$]{Zatorska}. Note that in the aforementioned paper the authors work with the entropy $s$ instead of the potential temperature $\theta$. However, in their framework the specified choice of the function $\mathcal{T}$ yields $s=\theta$. We point out that the physically relevant adiabatic indices $\gamma$ lie in the interval $(1,2]$ if $d=2$ and in the interval $(1,5/3]$ if $d=3$. Consequently, in three space dimensions there are physically relevant values of the adiabatic index for which the global-in-time existence of weak solutions remains an open problem for the Navier-Stokes system \eqref{cequation}-\eqref{isen_press}.

A simpler model for viscous compressible fluid flow is the barotropic Navier-Stokes system with the state equation $\p = a \varrho^\gamma,$ $a= \const.$ The first global-in-time existence result for weak solutions of this system allowing general initial data was established
in 1998  by Lions \citep{Lions} for
$\gamma\geq 3/2$ if $d=2$ and $\gamma\geq 9/5$ if $d=3$.  In 2001, Feireisl, Novotn\'y, and Petzeltov\'a \citep{Feireisl_Novotny_Petzeltova}  extended Lions's result to the situation $\gamma>1$ for $d=2$ and $\gamma>3/2$ for $d=3$; see also Feireisl, Karper, Pokorný \citep{Feireisl_Karper_Pokorny}. To date, the latter is the best available global-in-time existence result for weak solutions for the barotropic Navier-Stokes system. The main obstacle that hampers the derivation of the existence result for $\gamma\leq 3/2$ in three space dimensions is the lack of suitable a priori estimates for the convective term $\varrho\fatu\otimes\fatu $. These difficulties are inherited by the full Navier-Stokes-Fourier system
that includes an energy equation, too. In \citep{feireisl_novotny}, Feireisl and Novotn\'y  obtained
the existence of global-in-time weak solutions for the Navier-Stokes-Fourier system. However, their result holds only
for a very restrictive class of state equations. In particular, the natural example
of the perfect gas law $p = \varrho \theta$ is still open for the existence of weak solutions. In this context, we refer a reader to
\citep{Lukacova_IMA}, where the complete Navier-Stokes-Fourier system for the perfect gas was studied in the context of generalized solutions.

The question of uniqueness of weak solutions remains open in general. However, we have a \textit{weak-strong uniqueness principle} for the barotropic Navier-Stokes equations. It means
that weak and strong solutions to the Navier-Stokes system emanating from the same initial data coincide; see, e.g., Feireisl, Jin, Novotn\'y \citep{Feireisl_Jin_Novotny} or Feireisl \citep{Feireisl_Weak_Strong_2019}.

In \citep{Feireisl_Wiedemann}, Feireisl et al. introduced a new concept of generalized solutions to the barotropic Navier-Stokes system. They work with the so-called dissipative measure-valued (DMV) solutions that are motivated by the concept of Young measures. In this context, a DMV-strong uniqueness principle was established and the existence of global-in-time DMV solutions for a class of pressure state equations including the barotropic case with $\gamma\geq 1$ was achieved. In our recent work \citep{Lukacova_Schoemer}, we have extended the DMV-strong uniqueness result to the Navier-Stokes system with potential temperature transport \eqref{cequation}--\eqref{isen_press}.

In \citep[Chapter 13]{Feireisl_Lukacova_Book}, Feireisl et al.~give a constructive existence proof and demonstrate that DMV solutions to the barotropic Navier-Stokes system can also be obtained by means of a convergent numerical method that was originally developed by Karlsen and Karper \citep{Karlsen_Karper_1}, \citep{Karlsen_Karper_2}, \citep{Karlsen_Karper_3}, \citep{Karper}. However, their result is based on the assumption that $\gamma>6/5$ if $d=3$ and $\gamma>8/7$ if $d=2$; for the three-dimensional case see also Feireisl and Luk\'a\v cov\'a\,-Medvi\softd ov\'a \citep{Feireisl_Lukacova}.

The goal of this paper is to introduce a concept of DMV solutions to the Navier-Stokes system with potential temperature transport and prove the global-in-time existence of such generalized solutions for all $\gamma > 1$ by analyzing the convergence of a suitable numerical scheme.  To this end, we propose a new version of the mixed finite element-finite volume method of
Karlsen and Karper \citep{Karlsen_Karper_1}; see also \citep{Feireisl_Karper_Pokorny}, \citep[Chapter 13]{Feireisl_Lukacova_Book},  \citep{Feireisl_Lukacova}.

The paper is organized as follows: In Section \ref{sec_meas_sol}, we introduce our notion of DMV solutions to the Navier-Stokes system with potential temperature transport and present our main result. Section~\ref{sec_num_scheme} is devoted to the numerical method and the collection of its basic properties. In Section~\ref{sec_stability}, we state a discrete energy equality for our method which serves as a basis for several stability estimates. The consistency of the numerical method is established in Section~\ref{sec_consistency} and in Section~\ref{sec_convergence} we conclude that any Young measure generated by the solutions to our numerical method represents a DMV solution to the Navier-Stokes system with potential temperature transport. In particular, we show that the numerical solutions converge weakly to the expected values with respect to the Young measure.
The convergence of numerical solutions is strong as long as a strong solution of  \eqref{cequation}--\eqref{isen_press} exists.

\section{Dissipative measure\hspace{0.5mm}-valued solutions}\label{sec_meas_sol}
Before defining dissipative measure-valued solutions to the Navier-Stokes system with potential temperature transport, we fix the initial and boundary conditions. The Navier-Stokes system with potential temperature transport (\ref{cequation})--(\ref{isen_press}) is endowed with the initial data
\begin{align}
    \varrho(0,\cdot) = \varrho_0 \,, \qquad \theta(0,\cdot) = \thetazero \,, \qquad \fatu(0,\cdot) = \fatu_0\,, \label{initial}
\end{align}
and the no-slip boundary condition
\begin{align}
    \fatu|_{[0,T]\times\po} = \mathbf{0}\,. \label{no_slip}
\end{align}
We henceforth write $\Omega_t = (0,t)\times\Omega$ whenever $t>0$.
Furthermore, $P:[0,\infty)\to\reals$,
\begin{align}
    P(z) = \frac{a}{\gamma-1}\,z^\gamma\,, \label{press_pot}
\end{align}
is the so-called \textit{pressure potential}. If $\mathcal{V}=\{\mathcal{V}_{(t,\fatx)}\}_{(t,\fatx)\,\in\,\Omega_T}$ is a parametrized probability measure (Young measure) acting on $\reals^{d+2}$, we write
\begin{align*}
    \omean{\mathcal{V}_{(t,\fatx)};g}\equiv\int_{\reals^{d+2}}g\;\dd\mathcal{V}_{(t,\fatx)}\equiv\int_{\reals^{d+2}}g(\bvarrho,\btheta,\bfatu)\;\dd\mathcal{V}_{(t,\fatx)}(\bvarrho,\btheta,\bfatu)
\end{align*}
whenever $g\in C(\reals^{d+2})$. Moreover, we tend to write out the function $g$ in terms of the integration variables $(\bvarrho,\btheta,\bfatu)\in\reals\times\reals\times\reals^d\cong\reals^{d+2}$: if, for example, $g(\bvarrho,\btheta,\bfatu)=\bvarrho\,\bfatu$, then we also write
\begin{align*}
    \omean{\mathcal{V}_{(t,\fatx)};\bvarrho\,\bfatu} \qquad \text{instead of} \qquad \omean{\mathcal{V}_{(t,\fatx)};g}\,.
\end{align*}

We proceed by defining dissipative measure-valued solutions to the Navier-Stokes system with potential temperature transport (\ref{cequation})--(\ref{isen_press}).

\begin{definition}[\textbf{DMV solutions}]\label{def_meas_sol}
A parametrized probability measure $\mathcal{V}=\{\mathcal{V}_{(t,\fatx)}\}_{(t,\fatx)\,\in\,\Omega_T}$ that satisfies
\begin{gather*}
    \mathcal{V}\in\Lp{\infty}{0.2mm}_{\mathrm{weak}^\star}(\Omega_T;\mathcal{P}(\reals^{d+2}))\,\footnotemark, \qquad \reals^{d+2} = \big\{(\bvarrho,\btheta,\bfatu)\,\big|\,\bvarrho,\btheta\in\reals, \bfatu\in\reals^d\big\}\,,
\end{gather*}
and for which there exists a constant $c_\star>0$ such that
\begin{gather*}
    \mathcal{V}_{(t,\fatx)}\big(\{\bvarrho\geq 0\}\cap\{\btheta\geq c_\star\}\big) = 1 \quad \text{for a.a. $(t,\fatx)\in\Omega_T$,}
\end{gather*}
\footnotetext{$\mathcal{P}(\reals^{d+2})$ denotes the space of probability measures on $\reals^{d+2}$.}is called a \textit{dissipative measure-valued (DMV) solution} to the Navier-Stokes system with potential temperature transport (\ref{cequation})--(\ref{isen_press}) with initial and boundary conditions (\ref{initial}) and (\ref{no_slip}) if it satisfies:
\begin{itemize}
    \item{(\textbf{energy inequality})
    \begin{align*}
    \fatu_{\mathcal{V}}\equiv\omean{\mathcal{V};\bfatu}\in\Lp{2}{0mm}(0,T;W^{1,2}_0(\Omega)^d)\,, \qquad \left\langle\mathcal{V};\frac{1}{2}\,\bvarrho\,|\bfatu|^2+P(\bvarrho\,\btheta)\right\rangle\in\Lp{1}{0mm}(\Omega_T)\,,
    \end{align*}
    and the integral inequality
    \begin{align}
        &\into\left\langle\mathcal{V}_{(\tau,\,\cdot\,)};\frac{1}{2}\,\bvarrho\,|\bfatu|^2+P(\bvarrho\,\btheta)\right\rangle\dx + \inttauinto \,\mathbb{S}(\gradx\fatu_{\mathcal{V}}):\gradx\fatu_{\mathcal{V}}\;\dxdt \notag \\[2mm]
        &\qquad \qquad \qquad \qquad + \int_{\,\overline{\Omega}}\dd\mathfrak{E}(\tau) + \int_{\,\overline{\Omega_\tau}}\dd\mathfrak{D} \leq \into\left[\,\frac{1}{2}\,\varrho_0|\fatu_0|^2+P(\varrho_0\thetazero)\right]\dx \label{energy_inequ}
    \end{align}
    holds for a.a. $\tau\in(0,T)$ with the \textit{energy concentration defect}
    \begin{align*}
    \mathfrak{E}\in\Lp{\infty}{0.2mm}_{\mathrm{weak}^\star}(0,T;\mathcal{M}^+(\overline{\Omega}))
    \end{align*}
    and the \textit{dissipation defect}
    \begin{align*}
    \mathfrak{D}\in\mathcal{M}^+(\,\overline{\Omega_T})\,;
    \end{align*}}
    \item{(\textbf{continuity equation})
    \begin{align*}
        \omean{\mathcal{V};\bvarrho\,}\in C_{\mathrm{weak}}([0,T];\Lp{\gamma}{0.5mm}(\Omega))\,, \quad \omean{\mathcal{V}_{(0,\fatx)};\bvarrho\,} = \varrho_0(\fatx) \;\; \text{for a.a. $\fatx\in\Omega$}
    \end{align*}
    and the integral identity
    \begin{align}
        \left[\,\into\,\omean{\mathcal{V}_{(t,\,\cdot\,)};\bvarrho\,}\,\varphi(t,\cdot)\;\dx\right]_{t\,=\,0}^{t\,=\,\tau} = \inttauinto\Big[\omean{\mathcal{V};\bvarrho\,}\,\pt\varphi + \omean{\mathcal{V};\bvarrho\,\bfatu}\cdot\gradx\varphi\Big]\;\dxdt
    \end{align}
    holds for all $\tau\in[0,T]$ and all $\varphi\in W^{1,\infty}(\Omega_T)$\footnotemark;\footnotetext{Here, the (Lipschitz) continuous representative of $\varphi\in W^{1,\infty}(\Omega_T)$ is meant.}}
    \item{(\textbf{momentum equation})
    \begin{align*}
        \omean{\mathcal{V};\bvarrho\,\bfatu}\in C_{\mathrm{weak}}([0,T];\Lp{\frac{2\gamma}{\gamma+1}}{0.5mm}(\Omega)^d)\,, \quad \omean{\mathcal{V}_{(0,\fatx)};\bvarrho\,\bfatu} = \varrho_0(\fatx)\fatu_0(\fatx) \;\; \text{for a.a. $\fatx\in\Omega$}
    \end{align*}
    and the integral identity
    \begin{align}
    \left[\,\into\,\omean{\mathcal{V}_{(t,\,\cdot\,)};\bvarrho\,\bfatu}\cdot\bm{\varphi}(t,\cdot)\;\dx\right]_{t\,=\,0}^{t\,=\,\tau}
    &= \inttauinto\Big[\omean{\mathcal{V};\bvarrho\,\bfatu}\cdot\pt\bm{\varphi} + \omean{\mathcal{V};\bvarrho\,\bfatu\otimes\bfatu+\p(\bvarrho\,\btheta)\Id}:\gradx\bm{\varphi}\Big]\;\dxdt \notag \\[2mm]
    &\phantom{\,=\,}-\inttauinto \,\mathbb{S}(\gradx\fatu_{\mathcal{V}}):\gradx\bm{\varphi}\;\dxdt + \inttauinto\gradx\bm{\varphi}:\dd\bm{\mathfrak{R}}(t)\hspace{0.3mm}\dt
    \end{align}
    holds for all $\tau\in[0,T]$ and all $\bm{\varphi}\in\Cone(\,\overline{\Omega_T})^d$ satisfying $\bm{\varphi}|_{[0,T]\times\po}=\bm{0}$, where the \textit{Reynolds concentration defect} fulfills
    \begin{gather*}
        \bm{\mathfrak{R}}\in\Lp{\infty}{0.2mm}_{\mathrm{weak}^\star}(0,T;\mathcal{M}(\overline{\Omega})^{d\times d}_{\mathrm{sym},+})\,\footnotemark \\[2mm]
        \text{and} \qquad \underline{d}\hspace{0.3mm}\mathfrak{E}\leq\mathrm{tr}(\bm{\mathfrak{R}})\leq\overline{d}\hspace{0.3mm}\mathfrak{E} \quad \text{for some constants $\overline{d}\geq\underline{d}> 0$;}
    \end{gather*}
    \footnotetext{$\mathcal{M}(\overline{\Omega})^{d\times d}_{\mathrm{sym},+}$ denotes the set of bounded Radon measures defined on $\overline{\Omega}$ and ranging in the set of symmetric positive semi-definite matrices, i.e., $\mathcal{M}(\overline{\Omega})^{d\times d}_{\mathrm{sym},+} = \left\{\mu\in\mathcal{M}(\overline{\Omega})^{d\times d}_{\mathrm{sym}}\,\left|\,\int_{\,\overline{\Omega}}\,\phi(\xi\otimes\xi):\dd\mu \geq 0 \;\text{for all} \; \xi\in\reals^d, \; \phi\in C(\overline{\Omega}), \; \phi\geq 0\right.\right\}$.}}
    \item{(\textbf{potential temperature equation})
    \begin{align*}
        \omean{\mathcal{V};\bvarrho\,\btheta\,}\in C_{\mathrm{weak}}([0,T];\Lp{\gamma}{0.5mm}(\Omega))\,, \quad \omean{\mathcal{V}_{(0,\fatx)};\bvarrho\,\btheta\,} = \varrho_0(\fatx)\thetazero(\fatx) \;\; \text{for a.a. $\fatx\in\Omega$}
    \end{align*}
    and the integral identity
    \begin{align}
        \left[\,\into\,\omean{\mathcal{V}_{(t,\,\cdot\,)};\bvarrho\,\btheta\,}\,\varphi(t,\cdot)\;\dx\right]_{t\,=\,0}^{t\,=\,\tau} = \inttauinto\Big[\omean{\mathcal{V};\bvarrho\,\btheta\,}\,\pt\varphi + \omean{\mathcal{V};\bvarrho\,\btheta\,\bfatu}\cdot\gradx\varphi\Big]\;\dxdt
    \end{align}
    holds for all $\tau\in[0,T]$ and all $\varphi\in W^{1,\infty}(\Omega_T)$;}
    \item{(\textbf{entropy inequality})
    \begin{align*}
        \omean{\mathcal{V}_{(0,\fatx)};\bvarrho\hspace{0.3mm}\ln(\btheta)} = \varrho_0(\fatx)\ln(\thetazero(\fatx)) \;\;\text{for a.a. $\fatx\in\Omega$}
    \end{align*}
    and for any $\psi\in W^{1,\infty}(\Omega_T)$, $\psi\geq 0$, the integral inequality
    \begin{align}
        \left[\,\into\omean{\mathcal{V}_{(t,\,\cdot\,)};\bvarrho\hspace{0.3mm}\ln(\btheta)}\,\psi(t,\cdot)\;\dx\right]^{t\,=\,\tau}_{t\,=\,0} \geq \inttauinto \big[\omean{\mathcal{V};\bvarrho\hspace{0.3mm}\ln(\btheta)}\,\pt\psi + \omean{\mathcal{V};\bvarrho\hspace{0.3mm}\ln(\btheta)\bfatu}\cdot\gradx\psi\big]\,\dxdt \label{entropy_inequ}
    \end{align}
    is satisfied for a.a. $\tau\in(0,T)$;}
    \item{(\textbf{Poincaré's inequality}) \\
    there exists a constant $C_P>0$ such that
    \begin{align}
    \inttauinto \,\omean{\mathcal{V};|\bfatu-\bm{U}|^2}\;\dxdt \leq C_P\!\left(\,\inttauinto|\gradx(\fatu_{\mathcal{V}}-\bm{U})|^2\;\dxdt+ \inttau\!\!\int_{\,\overline{\Omega}}\,\dd\mathfrak{E}(t)\hspace{0.3mm}\dt + \int_{\,\overline{\Omega_\tau}}\dd\mathfrak{D}\right) \label{poincare_inequ}
    \end{align}
    for a.a. $\tau\in(0,T)$ and all $\bm{U}\in\Lp{2}{0mm}(0,T;W^{1,2}_0(\Omega)^d)$.}
\end{itemize}
\end{definition}


\begin{remark}
Note that the physical entropy $S$ is proportional to $\varrho \ln( \theta )$.  We require that our dissipative solutions satisfy the Second Law of Thermodynamics that is expressed by \eqref{entropy_inequ} for adiabatic processes.
The entropy inequality (\ref{entropy_inequ}) and Poincaré's inequality (\ref{poincare_inequ}) included in the definition of DMV solutions to the Navier-Stokes system with potential temperature transport are fundamental to guarantee DMV-strong uniqueness; see \citep{Lukacova_Schoemer}.
\end{remark}

We are ready to formulate the main result of this paper: the existence of DMV solutions to the Navier-Stokes system with potential temperature transport.

\begin{theorem}[\textbf{Existence of DMV solutions}]\label{thm_main}
Let $\gamma>1$, $T>0$, $d\in\{2,3\}$, and $\Omega\subset\reals^d$ a bounded Lipschitz domain. Further, let $\varrho_0,\thetazero\in\Lp{\infty}{0.5mm}(\Omega)$ and $\fatu_0\in W^{1,2}_0(\Omega)^d$, where
\begin{align}
    \varrho_0 > 0 \quad \text{a.e. in $\Omega$} \qquad \text{and} \qquad c_\star < \thetazero < c^\star \quad \text{a.e. in $\Omega$} \label{initial_2}
\end{align}
for some constants $0<c_\star < c^\star$.
Then there is a DMV solution $\mathcal{V}$ to system $\mathrm{(\ref{cequation})}$--$\mathrm{(\ref{isen_press})}$ subject to the initial and boundary conditions $\mathrm{(\ref{initial})}$ and $\mathrm{(\ref{no_slip})}$ that additionally satisfies
\begin{align}
    \mathcal{V}_{(t,\fatx)}\big(\{\btheta\leq c^\star\}\big) = 1 \quad \text{for a.a. $(t,\fatx)\in\Omega_T$.}\label{abcde}
\end{align}
\phantom{.}\hfill
\end{theorem}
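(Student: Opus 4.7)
The proof follows the blueprint announced in the introduction: construct approximate solutions via the mixed finite element–finite volume method, extract a Young measure from the resulting sequence, and identify it as a DMV solution in the sense of Definition \ref{def_meas_sol}. First I would produce, for each discretization level $n$, the fully discrete solution $(\rhon,\zn,\un)$ of the scheme of Section \ref{sec_num_scheme}, where $\zn$ approximates $\varrho\theta$ and $\theta_n\equiv\zn/\rhon$. Existence on each time slab follows from the scheme's design. I need four structural properties at the discrete level: mass conservation for $\rhon$; a discrete maximum principle yielding $c_\star\leq\theta_n\leq c^\star$, inherited from the compatibility between the continuity and the $\zn$-equations so that $\theta_n$ appears as a convex combination of neighbouring values through the upwind flux; a discrete energy inequality controlling $\|\sqrt{\rhon}\,\un\|_{L^\infty_t L^2_x}$, $\|P(\rhon\theta_n)\|_{L^\infty_t L^1_x}$, and the discrete gradient $\gradh\un$ in $L^2_{t,x}$; and a discrete entropy inequality for $\rhon\ln(\theta_n)$ obtained by combining the renormalized continuity equation with the $\zn$-equation.

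Equipped with these a priori bounds I invoke the fundamental theorem on Young measures to extract, up to a subsequence, a parametrized measure $\mathcal V\in\Lp{\infty}{0.2mm}_{\mathrm{weak}^\star}(\Omega_T;\mathcal P(\reals^{d+2}))$ generated by $(\rhon,\theta_n,\un)$. The pointwise bounds on $\theta_n$ and the nonnegativity of $\rhon$ deliver $\mathcal V_{(t,\fatx)}(\{\bvarrho\geq 0\}\cap\{c_\star\leq\btheta\leq c^\star\})=1$, which simultaneously yields the standing assumption of Definition \ref{def_meas_sol} and the additional assertion \eqref{abcde}. The consistency analysis of Section \ref{sec_consistency} identifies the weak-$\star$ limits of the linear terms in the discrete equations with $\langle\mathcal V;\,\cdot\,\rangle$. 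For the nonlinear observables $\bvarrho\,\bfatu\otimes\bfatu$, $\p(\bvarrho\,\btheta)$, $\bvarrho|\bfatu|^2$, and $\bvarrho\ln(\btheta)\bfatu$, the weak-$\star$ limit equals the Young-measure expectation plus a concentration defect; these defects provide exactly the measures $\mathfrak E$ (from the total energy residual), $\bm{\mathfrak R}$ (from the convective momentum residual), and $\mathfrak D$ (from the viscous dissipation residual), their required regularity following from the uniform discrete energy estimate. The comparability $\underline d\,\mathfrak E\leq\mathrm{tr}\,\bm{\mathfrak R}\leq\overline d\,\mathfrak E$ is extracted from the pointwise identity $\mathrm{tr}(\bvarrho\,\bfatu\otimes\bfatu)=\bvarrho|\bfatu|^2$ linking $\mathrm{tr}\,\bm{\mathfrak R}$ to the kinetic part of $\mathfrak E$, combined with the two-sided $\gamma$-coercivity of the pressure potential $P$, which ties the pressure part of $\mathfrak E$ to the kinetic part.

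The main obstacle is the regime $\gamma\in(1,3/2]$ in $d=3$, where strong compactness of the density is unavailable and weak solutions remain out of reach; the DMV framework absorbs all concentration and oscillation into $\mathcal V$ and the defects $\mathfrak E,\mathfrak D,\bm{\mathfrak R}$, but requires uniform control of those defects. This control comes solely from the discrete energy inequality, so the crux is to design the upwind fluxes so that the continuity equation, the $\zn$-equation, and the energy identity are mutually consistent (preserving mass, the $\theta_n$-maximum principle, and energy dissipation) uniformly in $n$. Once the defects are controlled, the entropy inequality passes to the limit via Lipschitz regularity of $\ln$ on $[c_\star,c^\star]$ together with lower semicontinuity of convex functionals under weak-$\star$ convergence of Young measures, while Poincaré's inequality \eqref{poincare_inequ} follows from the splitting $\bfatu-\bm U = (\bfatu-\fatu_{\mathcal V})+(\fatu_{\mathcal V}-\bm U)$: the $\mathcal V$-expectation of $|\bfatu-\fatu_{\mathcal V}|^2$ is precisely the kinetic part of the variance defect, controlled by $\int d\mathfrak E + \int d\mathfrak D$, and the second summand is handled by the classical Poincaré inequality applied to the $W^{1,2}_0(\Omega)$-function $\fatu_{\mathcal V}-\bm U$.
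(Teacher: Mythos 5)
Your architecture coincides with the paper's: construct FE--FV approximations, prove mass conservation, a discrete maximum principle for $\theta_h$, a discrete energy balance and a discrete entropy inequality, generate a Young measure, and absorb the nonlinear residuals into the defects $\mathfrak{E}$, $\mathfrak{D}$, $\bm{\mathfrak{R}}$ (the trace comparison and the Poincar\'e splitting are handled exactly as you describe, except that in the paper the variance $\langle\mathcal{V};|\bfatu-\fatu_{\mathcal V}|^2\rangle$ is bounded through the discrete Poincar\'e inequality for Crouzeix--Raviart functions and the gradient part of $\mathfrak{D}$, rather than being read off directly as a ``kinetic variance defect'').

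There is, however, one concrete gap at the point you yourself identify as the crux. For $\gamma$ close to $1$ the energy estimate only gives $\varrho_h\in L^\infty(0,T;L^\gamma(\Omega_h))$, which is not enough to control the consistency errors of the upwinded convective terms (terms of the type $J_{2,h},J_{3,h},J_{4,h}$ involving $\varrho_h$ or $\varrho_h\overline{\fatu_h}$ in $L^2$); this is precisely why the earlier convergence results for this class of schemes require $\gamma>6/5$ ($d=3$). Your proposal resolves this by ``designing the upwind fluxes so that the equations are mutually consistent,'' but no choice of upwind flux alone supplies the missing integrability of the density. The paper's actual new ingredient is the artificial pressure $h^\delta[(\varrho_h^k)^2+(\varrho_h^k\theta_h^k)^2]$ inserted into the momentum equation: it yields the extra stability estimates $\|h^{\delta/2}\varrho_h\|_{L^\infty(0,T;L^2(\Omega_h))}\lesssim 1$ and $\|h^{\delta/2}\varrho_h\theta_h\|_{L^\infty(0,T;L^2(\Omega_h))}\lesssim 1$ that make the consistency errors $O(h^\beta)$ for all $\gamma>1$, and its (generally nonvanishing) weak-$\star$ limit is absorbed into $\mathfrak{E}$ and $\bm{\mathfrak{R}}$ -- which is exactly where the generality of the DMV framework is used. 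Without this (or an equivalent stabilization) your argument closes only for the restricted range of $\gamma$ already covered in the literature.
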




\section{Numerical scheme}\label{sec_num_scheme}
In this section, we present our numerical method, the mixed finite element-finite volume method.

\subsection{Spatial discretization}
We choose $H\in(0,1)$ and approximate the spatial domain $\Omega\subset\reals^d$ by a family $\{\Omega_h\}_{h\,\in\,(0,H]}$ that is related to a family of (finite) meshes $(\mathcal{T}_h)_{h\,\in\,(0,H]}$ by the constraint
\begin{align*}
    \overline{\Omega_h} = \bigcup_{K\,\in\,\mathcal{T}_h} K \qquad \text{for all $h\in(0,H]$.}
\end{align*}
We assume that the subsequent conditions hold:
\begin{itemize}
\item{$\Omega_h\subset\Omega$ for all $h\in(0,H]$;}
\item{each element $K$ of a mesh $\mathcal{T}_h$ is a $d$-simplex that can be written as
    \begin{align*}
        K = h\mathbb{A}_K (K_{\mathrm{ref}}) + \mathbf{a}_K\,, \qquad \mathbb{A}_K\in\reals^{d\times d}\,, \qquad \mathbf{a}_K\in\reals^d\,,
    \end{align*}
    where the \textit{reference element} $K_\mathrm{ref}$ is the convex hull of the zero vector $\bm{0}\in\reals^d$ and the standard unit vectors $\ei{1},\dots,\ei{d}\in\reals^d$, i.e.,
  $      K_{\mathrm{ref}} = \conv\{\mathbf{0},\ei{1},\dots,\ei{d}\}\,$;}
    \item{there exist constants $C>c>0$ such that
    \begin{align*}
        \mathrm{spectrum}(\mathbb{A}_K^T\mathbb{A}_K) \subset [c,C] \qquad \text{for all $K\in{\displaystyle\bigcup_{h\,\in\,(0,H]}}\mathcal{T}_h$}\,;
    \end{align*}}
    \item{the intersection of two distinct elements $K_1,K_2$ of a mesh $\mathcal{T}_h$ is either empty, a common vertex, a common edge, or (in the case $d=3$) a common face;}
    \item{for all compact sets $\mathcal{K}\subset\Omega$ there exists a constant $h_0\in(0,H]$ such that
    \begin{align}
        \mathcal{K}\subset \Omega_h \qquad \text{for all $h\in(0,h_0)$. \label{comp_set_mesh}}
    \end{align}}
\end{itemize}
The symbol $\gh$ denotes the set of all faces ($d=3$) or all edges ($d=2$) in the mesh $\mathcal{T}_h$. Further, we define the sets
\begin{align*}
    \ghext = \big\{\sigma\in\gh\,\big|\,\sigma\subset\poh\big\} \qquad \text{and} \qquad \ghint = \gh\bs\ghext
\end{align*}
and, for $K\in\mathcal{T}_h$, the sets
\begin{align*}
    \mathcal{E}_h(K) = \big\{\sigma\in\mathcal{E}_h\,\big|\,\sigma\subset K\big\} \qquad \text{and} \qquad \mathcal{E}_{h,z}(K) = \big\{\sigma\in\mathcal{E}_{h,z}\,\big|\,\sigma\subset K\big\}\,,
\end{align*}
where $z\in\{\mathrm{int},\mathrm{ext}\}$. The elements of $\mathcal{E}_{h,\mathrm{int}}$, $\mathcal{E}_{h,\mathrm{int}}(K)$ and $\mathcal{E}_{h,\mathrm{ext}}$, $\mathcal{E}_{h,\mathrm{ext}}(K)$ are referred to as \textit{exterior} and \textit{interior faces} (\textit{edges}), respectively. In connection with these sets, we introduce the notation
\begin{align*}
    \int_{\mathcal{E}_{h,\mathrm{int}}} \equiv \sginth\intg \qquad \text{and} \qquad \int_{\mathcal{E}_{h}(K)} \equiv \sk\sgkh\intg\,.
\end{align*}
Moreover, we equip each $\sigma\in\gh$ with a unit vector $\ngamma$ by following the subsequent procedure:

We fix an arbitrary element $K_\sigma\in\mathcal{T}_h$ such that $\sigma\in \mathcal{E}_h(K_\sigma)$ and set $\ngamma = \fatn_{K_\sigma}(\fatx_\sigma)$. Here, $\fatx_\sigma$ denotes the center of mass of $\sigma$ and $\fatn_{K_\sigma}(\fatx_\sigma)$ is the outward-pointing unit normal vector to the element $K_\sigma$ at $\fatx_\sigma$.
Finally, it will be convenient to write $A \lesssim B$ whenever there is an $h$-independent constant $c>0$ such that $A \leq cB$ and $A \approx B$ whenever $A\lesssim B$ and $B\lesssim A$.

\subsection{Function spaces and projection operators}\label{sec_projection}
We proceed by defining the relevant discrete function spaces. The space of piecewise constant functions is denoted by
\begin{align*}
    Q_h &= \big\{v\in L^2(\Omega)\,\big|\,\text{$v|_{\Omega\backslash\overline{\Omega_h}}=0$ \; and \; $v|_K\in \Pzero(K)$ for all $K\in\mathcal{T}_h$}\big\}\,\footnotemark.
\end{align*}
\footnotetext{$P_n(K)$ denotes the set of all restrictions of polynomial functions $\reals^d\to\reals$ of degree at most $n$ to the set $K$.}For $v\in Q_h$ and $K\in\mathcal{T}_h$ we set $v_K = v(\fatx_K)$, where $\fatx_K$ denotes the center of mass of $K$. The projection $\piqh\equiv\overline{\vphantom{t}\;\cdot\;}:L^2(\Omega)\to Q_h$ associated with $Q_h$ is characterized by
\begin{align*}
    (\piqh v)\big|_K \equiv \overline{v}|_K \equiv \frac{1}{|K|}\intk v\;\dy \quad \text{for all $K\in\mathcal{T}_h$}\,.
\end{align*}
The Crouzeix-Raviart finite element spaces are denoted by
\begin{align*}
    V_h &= \left\{v\in L^2(\Omega)\left|\,\begin{array}{c}
        \text{$v|_{\Omega\backslash{\overline{\Omega_h}}}=0$, \; $v|_K\in \Pone(K)$ for all $K\in\mathcal{T}_h$, \; and} \\[2mm]
        {\displaystyle\intg\,\lim_{\delta\,\to\,0^+}}\big( v(\fatx-\delta\ngamma)-v(\fatx+\delta\ngamma)\big)\,\dsx = 0 \quad \text{for all $\sigma\in\ghint$}
    \end{array}\!\!\right.\right\}, \\[4mm]
    V_{0,h} &= \left\{v\in V_h\left|\;\intg\, \lim_{\delta\,\to\,0^+}v(\fatx-\delta\ngamma)\;\dsx = 0 \quad \text{for all $\sigma\in\ghext$}\right.\right\}\,.
\end{align*}
With these spaces we associate the projection $\pivh:W^{1,2}(\Omega)\to V_h$ that is determined by
\begin{align*}
    \intg \pivh v\;\dsx = \intg v\;\dsx \quad \text{for all $\sigma\in\gh$.}
\end{align*}
Additionally, we agree on the notation
\begin{gather*}
    Q_h^+ = \big\{v\in Q_h\,\big|\,v|_K > 0 \;\; \text{for all $K\in\mathcal{T}_h$}\big\}\,, \qquad Q_h^{0,+} = \big\{v\in Q_h\,\big|\,v|_K \geq 0 \;\; \text{for all $K\in\mathcal{T}_h$}\big\}\,, \notag \\[2mm]
    \bm{Q}_h = (Q_h)^d\,, \qquad \bm{V}_h = (V_h)^d\,, \qquad \text{and} \qquad \bm{V}_{0,h} = (V_{0,h})^d\,.
\end{gather*}


\subsection{Mesh-related operators}\label{sec_further_op}
Next, we define some mesh-related operators. We start by introducing the discrete counterparts of the differential operators $\gradx$ and $\divx$. They are determined by the stipulations
\begin{align*}
    {\arraycolsep=0pt
    \begin{array}{crll}
        & \qquad (\gradh \fatv)|_K &\,= \gradx (\fatv|_K) & \quad \text{for all $\fatv\in V_h\cup\bm{V}_h$ and all $K\in\mathcal{T}_h$} \\[4mm]
        \text{and} & \qquad \divh(\fatv)|_{K} &\,= \divx(\fatv|_K) & \quad \text{for all $\fatv\in\bm{V}_h$ and all $K\in\mathcal{T}_h$,}
    \end{array}}
\end{align*}
respectively. We continue by defining several trace operators. For arbitrary $\sigma\in\gh$, $\fatx\in\sigma$, and
\[\fatv\in (Q_h\cup\bm{Q}_h)\cup(V_h\cup\bm{V}_h)\cup(C(\overline{\Omega})\cup C(\overline{\Omega})^d)\]
we put
\begin{align*}
    \fatv^{\,\mathrm{in},\,\sigma}(\fatx) = \lim_{\delta\,\to\,0^+}\fatv(\fatx-\delta\ngamma)\,, \qquad \fatv^{\,\mathrm{out},\,\sigma}(\fatx) =\left\{\begin{array}{cl}
        {\displaystyle\lim_{\delta\,\to\,0^+}\fatv(\fatx+\delta\ngamma)} & \text{if $\sigma\in\ghint$,}  \\[2mm]
        \bm{0} & \text{else}
    \end{array}\right.\,.
\end{align*}
Further, we define
\begin{align*}
    \jumpG{\fatv} = \fatv^{\,\mathrm{out},\,\sigma}-\fatv^{\,\mathrm{in},\,\sigma}\,,  \quad \meanG{\fatv} = \dfrac{\fatv^{\,\mathrm{out},\,\sigma}+\fatv^{\,\mathrm{in},\,\sigma}}{2}\,, \quad \text{and} \quad \smallgmean{\fatv} = \frac{1}{|\sigma|}\intg \fatv^{\,\mathrm{in},\,\sigma}\;\dsx\,.
\end{align*}

The convective terms will be approximated by means of a dissipative upwind operator. For $\sigma\in\gh$, $\fatv\in\bm{V}_{0,h}$, and $\fatr\in Q_h\cup \bm{Q}_h$ we put
\begin{align*}
    \uppG{\fatr\!}{\fatv} &= \fatr^{\,\mathrm{out},\,\sigma}\left[\smallgmean{\fatv\cdot\ngamma}\right]^- + \fatr^{\,\mathrm{in},\,\sigma}\left[\smallgmean{\fatv\cdot\ngamma}\right]^+\,, \\[2mm]
    \upG{\fatr\!}{\fatv} &= \uppG{\fatr\!}{\fatv} -\hee\,\jumpG{\fatr}
    = \meanG{\bm{r}}\smallgmean{\fatv\cdot\ngamma}-\frac{1}{2}\,\jumpG{\bm{r}}\big(\he+|\smallgmean{\fatv\cdot\ngamma}|\big)\,, 
\end{align*}
where $\varepsilon>0$ is a given constant,
\begin{align*}
[x]^+ = \max\{x,0\}\,, \qquad \text{and} \qquad [x]^- = \min\{x,0\}\,.
\end{align*}

\begin{remark}
In the sequel, we tend to omit the letter $\sigma$ in the subscripts and superscripts of the operators defined in Sections \ref{sec_projection} and \ref{sec_further_op}. In some places, we also suppress the letter $h$ and the superscript \textit{in} in the notation if no confusion arises.
\end{remark}

\subsection{Time discretization}
In order to approximate the time derivatives, we apply the backward Euler method, i.e., the time derivative is represented by
\begin{align*}
    \Dt \fats^k_h = \frac{\fats^k_h-\fats^{k-1}_h}{\deltat}\,,
\end{align*}
where $\deltat>0$ is a given time step and $\fats^{k-1}_h$ and $\fats^k_h$ are the numerical solutions at the time levels $t_{k-1}=(k-1)\deltat$ and $t_k=k\deltat$, respectively. For the sake of simplicity, we assume that $\deltat$ is constant and that there is a number $N_T\in\naturals$ such that $N_T\deltat = T$.

\subsection{Numerical scheme}\label{sub_sec_scheme}
We are now ready to formulate our mixed finite element-finite volume (FE-FV) method.
\begin{definition}[\textbf{FE-FV method}]
A sequence $(\varrho_h^k,\thetakh,\fatu_h^k)_{k\,\in\,\naturals}\subset Q_h^+\times Q_h^+\times \bm{V}_{0,h}$ is a \textit{solution to our FE-FV method starting from the initial data} $(\varrho_h^0,\thetazeroh,\fatu_h^0)\in Q_h^+\times Q_h^+\times \bm{V}_h$ if the following equations hold for all $k\in\naturals$, $\phi_h\in Q_h$, and $\bm{\phi}_h\in\bm{V}_{0,h}$:
\begin{align}
    \intoh (\Dt\varrho^k_h)\,\phi_h\;\dx - \int_{\mathcal{E}_{\mathrm{int}}}\up{\varrho^k_h}{\fatu_h^k}\jump{\phi_h}\;\dsx &= 0\,, \label{r_method} \\[2mm]
    \intoh \Dt(\varrho^k_h\thetakh)\,\phi_h\;\dx - \int_{\mathcal{E}_{\mathrm{int}}}\up{\varrho^k_h\thetakh}{\fatu_h^k}\jump{\phi_h}\;\dsx &= 0\,, \label{z_method} \\[2mm]
    \intoh \Dt\!\left(\varrho_h^k\overline{\fatu_h^k}\,\right)\cdot\bm{\phi}_h\;\dx - \int_{\mathcal{E}_{\mathrm{int}}}\up{\varrho^k_h\overline{\fatu_h^k}}{\fatu_h^k}\cdot\Jump{\overline{\bm{\phi}_h}\vphantom{\overline{\overline{\phi}}}}\dsx + \mu\intoh \gradh\fatu_h^k:\gradh\bm{\phi}_h\;\dx & \phantom{.} \notag \\[2mm]
    {}+\nu\intoh \divh(\fatu_h^k)\,\divh(\bm{\phi}_h)\;\dx -\intoh \left(\p(\varrho^k_h\thetakh)+\hd\!\left[(\varrho^k_h)^2+(\varrho^k_h\thetakh)^2\right]\right)\divh(\bm{\phi}_h)\;\dx &= 0\,, \label{m_method}
\end{align}
where
\begin{align*}
    \delta>0 \qquad \text{and} \qquad \nu=\frac{d-2}{d}\mu+\lambda\geq 0\,.
\end{align*}
\end{definition}

\begin{remark}
We note that our FE-FV method is a generalization of the scheme presented in \citep[Chapter 13]{Feireisl_Lukacova_Book}. New ingredients are a modified upwind operator and the artificial pressure terms $\hd(\varrho_h^k)^2$, $\hd(\varrho^k_h\thetakh)^2$. The latter are added to ensure the consistency of our method for values of $\gamma$ close to $1$, see Sections \ref{sec_stability}, \ref{sec_consistency}.

\end{remark}

\subsubsection{Initial data}\label{subsubsec_data}
The initial data for the FE-FV method (\ref{r_method})--(\ref{m_method}) are given as
\begin{align}
    \varrho_h^0 = \piq\varrho_0\,, \qquad \thetazeroh = \piq \thetazero\,, \qquad \text{and} \qquad \fatu_h^0 = \piv\fatu_0\,. \label{disc_int_1}
\end{align}
As a consequence of this stipulation, we observe that $(\varrho_h^0,\thetazeroh,\fatu_h^0)\in Q_h^+\times Q_h^+\times\bm{V}_h$ and
\begin{align}
    c_\star < \thetazeroh < c^\star \quad \text{in $\Omega_h$.} \label{disc_int_2}
\end{align}

\subsubsection{Properties of the numerical method}
We proceed by summarizing several properties of the FE-FV method (\ref{r_method})--(\ref{m_method}).

\begin{lemma}\label{lem_sol}
Let $k\in\naturals$ and $(\varrho^{k-1}_h,\thetakmh,\fatu_h^{k-1})\in Q_h^+\times Q_h^+\times \bm{V}_h$ be given. \hypertarget{lem_sol_i}{}
\begin{enumerate}
    \item[$(\mathrm{i})$]{There exists a triplet $(\varrho^{k}_h,\thetakh,\fatu_h^{k})\in Q_h^+\times Q_h^+\times \bm{V}_{0,h}$ such that $\mathrm{(\ref{r_method})}$--$\mathrm{(\ref{m_method})}$ are satisfied for all $(\phi_h,\bm{\phi}_h)\in Q_h\times \bm{V}_{0,h}$. \hypertarget{lem_sol_ii}{}}
    \item[$(\mathrm{ii})$]{If $(\varrho^{k}_h,\thetakh,\fatu_h^{k})\in Q_h\times Q_h\times \bm{V}_{0,h}$ is a triplet of functions such that $\mathrm{(\ref{r_method})}$--$\mathrm{(\ref{m_method})}$ are satisfied for all $(\phi_h,\bm{\phi}_h)\in Q_h\times \bm{V}_{0,h}$, then $\varrho^{k}_h,\thetakh\in Q_h^+$ and
    \[\lnorm{\varrho_h^k}{1}{(\Omega_h)}{0.55}{-0.5mm}=\lnorm{\varrho_h^{k-1}}{1}{(\Omega_h)}{0.55}{-0.5mm} \quad \text{and} \quad \lnorm{\varrho^k_h\thetakh}{1}{(\Omega_h)}{0.55}{-0.5mm}=\lnorm{\varrho^{k-1}_h\thetakmh}{1}{(\Omega_h)}{0.55}{-0.5mm}\,.\]
    If, in addition, there are constants $0<\underline{c}<\overline{c}$ such that $\underline{c}<\thetakmh<\overline{c}$ in $\Omega_h$, then
    \[\underline{c}< \thetakh < \overline{c} \quad \text{in $\Omega_h$}\,.\]}
\end{enumerate}
\end{lemma}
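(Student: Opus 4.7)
I would establish (ii) first, because the positivity assertions in (ii) are needed when invoking a fixed-point argument for (i).

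\textbf{Part (ii).} The conservation of mass and of the quantity $\varrho_h\thetah$ follows by choosing the test function $\phi_h = \mathbf{1}_{\Omega_h}\in Q_h$ in \eqref{r_method} and \eqref{z_method}: since $\jump{\mathbf{1}_{\Omega_h}}=0$ on every interior face, the upwind fluxes drop out and one obtains, respectively,
\[\intoh\varrho_h^k\dx=\intoh\varrho_h^{k-1}\dx \qquad\text{and}\qquad\intoh\varrho^k_h\thetakh\dx=\intoh\varrho^{k-1}_h\thetakmh\dx.\]
For the positivity of $\varrho^k_h$, choose $\phi_h=\mathbf{1}_K$ for each $K\in\mathcal{T}_h$ to obtain an algebraic system $A\,\varrho^k=\varrho^{k-1}$ for the cell values $(\varrho_K^k)_{K\in\mathcal{T}_h}$. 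Using the representation $\upG{\varrho_h^k}{\fatu_h^k}=\meanG{\varrho_h^k}\smallgmean{\fatu_h^k\cdot\ngamma}-\frac12\jumpG{\varrho_h^k}(\he+|\smallgmean{\fatu_h^k\cdot\ngamma}|)$, the matrix $A$ is a Z-matrix (non-positive off-diagonal entries) which is strictly diagonally dominant because the $\he$-term contributes a strictly positive amount to every diagonal entry. Hence $A^{-1}\geq 0$ and $\varrho_h^{k-1}>0$ yields $\varrho_h^k>0$ in $\Omega_h$. For the bounds on $\thetakh$, one derives the discrete transport identity obtained by combining \eqref{r_method} and \eqref{z_method}: testing \eqref{z_method} with $\phi_h=\mathbf{1}_K$, testing \eqref{r_method} with $\phi_h=\thetakh\mathbf{1}_K$ and subtracting gives
\[|K|\,\varrho_K^{k-1}\,\frac{\thetakh|_K-\thetakmh|_K}{\deltat}=\sum_{\sigma\in\mathcal{E}_h(K)}|\sigma|\,a_{K,\sigma}\bigl(\thetakh|_{L_\sigma}-\thetakh|_K\bigr),\]
where $L_\sigma$ is the neighbor of $K$ across $\sigma$ and $a_{K,\sigma}\geq 0$, the non-negativity following from the upwind structure together with $\varrho_h^k>0$. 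Evaluating this identity at a cell where $\thetakh$ attains its maximum (resp.\ minimum) over $\Omega_h$ and using $\underline c<\thetakmh<\overline c$ gives the discrete maximum (resp.\ minimum) principle $\underline c<\thetakh<\overline c$ in $\Omega_h$.

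\textbf{Part (i).} I would argue by a fixed-point/topological-degree strategy on the finite-dimensional space $\bm{V}_{0,h}$. Given $\fatv\in\bm{V}_{0,h}$, the equations \eqref{r_method} and \eqref{z_method} become linear systems for the unknowns $\varrho^k_h$ and $\varrho^k_h\thetakh$. Exactly as in part (ii), the associated matrix is a strictly diagonally dominant Z-matrix, so both systems admit unique solutions $\varrho^k_h[\fatv]$ and $(\varrho\theta)^k_h[\fatv]$, and $\thetakh[\fatv]=(\varrho\theta)^k_h[\fatv]/\varrho^k_h[\fatv]$ inherits the bounds of (ii). These quantities depend continuously on $\fatv$. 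Substituting into \eqref{m_method} defines a continuous map $\mathcal{F}:\bm{V}_{0,h}\to\bm{V}_{0,h}$ whose zeros are solutions of the scheme. Testing $\mathcal{F}(\fatv)=\bm{0}$ with $\fatv$ itself and exploiting the cancellation between the convective upwind term and the mass-flux contribution — the same cancellation that produces the discrete kinetic-energy identity of Section~\ref{sec_stability} — yields an a priori bound $\wnorm{\fatv}{1,2}{(\Omega_h)}{0.55}{-0.5mm}\leq R$ for a constant $R$ depending only on the data, the mesh, and $\deltat$. A standard homotopy/Brouwer fixed-point argument on the ball $\{\|\fatv\|\leq R+1\}$ then produces a zero of $\mathcal{F}$, which is the desired solution triple.

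The main obstacle is the existence step: one must guarantee that the a priori estimate is stable under the homotopy connecting $\mathcal{F}$ to a map with known non-zero topological degree, and that the composition $\fatv\mapsto(\varrho^k_h[\fatv],\thetakh[\fatv])$ stays within the admissible set $Q_h^+\times Q_h^+$. Both points are handled by the uniform Z-matrix monotonicity established in (ii), which is precisely why one proves (ii) first.
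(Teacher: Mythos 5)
Your overall strategy coincides with the paper's: existence by a finite-dimensional homotopy/degree argument controlled by the discrete energy identity, and positivity together with the invariant region for $\thetakh$ from the monotone structure of the upwind transport operator. The packaging differs. For (i), the paper applies the abstract degree theorem of Gallou\"et--Maltese--Novotn\'y (Lemma \ref{lem_fix}) to the full triple $(\varrho_h^k,Z_h^k,\fatu_h^k)$ with $Z_h^k=\varrho_h^k\thetakh$ kept as an independent unknown and with the homotopy parameter $\zeta$ multiplying the convective and pressure terms; the ``no escape'' condition is then verified by the energy bound for $\fatu_h^k$, a minimum-cell argument giving $\varrho_h^k\geq \varrho_K^{k-1}/(1+\zeta\deltat\,|(\divh(\fatu_h^k))_K|)$, and mass conservation for the upper bound, while the $\zeta=0$ endpoint is a positive-definite linear system. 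Your reduction --- solving the two linear transport systems exactly for each $\fatv\in\bm V_{0,h}$ and running the fixed point on the velocity alone --- is a legitimate alternative and even avoids having to keep the iterates away from vacuum, but you still owe the explicit homotopy and the $\zeta$-uniform energy bound; as it stands that step is only sketched. For (ii), the paper cites the positivity lemma of Feireisl--Karper--Pokorn\'y and applies it to $\varrho_h^k\thetakh-\underline c\,\varrho_h^k$ and $\overline c\,\varrho_h^k-\varrho_h^k\thetakh$, which is your maximum principle in disguise; your direct computation of the transported form of the $\theta$-equation is fine and equivalent.

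One concrete correction is needed. The matrix $A$ of the discrete continuity equation is \emph{not} strictly diagonally dominant by rows, and the $\he$-stabilization is not the source of any dominance: that term adds $|\sigma|\he/2$ to the diagonal and subtracts exactly the same amount from the corresponding off-diagonal entry, so it cancels in every row and column sum. In fact the row sum equals $|K|/\deltat+|K|\,(\divh(\fatu_h^k))_K$, which is negative whenever $(\divh(\fatu_h^k))_K<-1/\deltat$. What is true is that $A$ is a Z-matrix with positive diagonal whose \emph{column} sums all equal $|K|/\deltat>0$, because the flux contribution of each interior face to the diagonal of one cell is exactly cancelled by its contribution to the off-diagonal entry in the neighbouring cell's row; hence $A$ is a nonsingular M-matrix, $A^{-1}\geq 0$ entrywise, and since $A^{-1}$ has no zero rows, $\varrho_h^{k-1}>0$ componentwise forces $\varrho_h^k=A^{-1}\varrho_h^{k-1}>0$. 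With this repair, your argument for (ii) and the unique solvability underlying the reduction in (i) both go through.
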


\begin{proof}
For the proof we refer the reader to Appendix \ref{app_solv}.
\end{proof}

From Lemma~\ref{lem_sol} we easily deduce the following corollary.

\begin{corollary}\label{cor_sol}
Any solution $(\varrho_h^k,\thetakh,\fatu_h^k)_{k\,\in\,\naturals}$ to the FE-FV method $\mathrm{(\ref{r_method})}$--$\mathrm{(\ref{m_method})}$ starting from the discrete initial data $\mathrm{(\ref{disc_int_1})}$ has the following properties:
\hypertarget{cor_ii}{}
\begin{enumerate}
    \item[$(\mathrm{i})$]{For every $k\in\naturals$ it holds $c_\star < \thetakh < c^\star$ in $\Omega_h$. \hypertarget{cor_iii}{}}
    \item[$(\mathrm{ii})$]{It fulfills $\lnorm{\varrho_h^k}{1}{(\Omega_h)}{0.55}{-0.5mm}=\lnorm{\varrho_h^0}{1}{(\Omega_h)}{0.55}{-0.5mm}$ and $\lnorm{\varrho_h^k\thetakh}{1}{(\Omega_h)}{0.55}{-0.5mm}=\lnorm{\varrho_h^0\thetazeroh}{1}{(\Omega_h)}{0.55}{-0.5mm}$ for all $k\in\naturals$.}
\end{enumerate}
\end{corollary}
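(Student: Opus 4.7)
The corollary is a direct induction from Lemma~\ref{lem_sol}, so the plan is essentially bookkeeping. I would argue both statements simultaneously by induction on $k\in\naturals$, since the hypotheses of Lemma~\ref{lem_sol}\hyperlink{lem_sol_ii}{(ii)} required at step $k$ are precisely the conclusions produced at step $k-1$.

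First I would set up the base case. By the discrete initialization \eqref{disc_int_1} together with \eqref{disc_int_2} in Section~\ref{subsubsec_data}, we have $(\varrho_h^0,\thetazeroh,\fatu_h^0)\in Q_h^+\times Q_h^+\times\bm{V}_h$ and $c_\star<\thetazeroh<c^\star$ in $\Omega_h$, so the initial triplet satisfies all the hypotheses required of the $(k-1)$-level triplet in Lemma~\ref{lem_sol}\hyperlink{lem_sol_ii}{(ii)} (with $\underline{c}=c_\star$ and $\overline{c}=c^\star$). The $L^1$ identities trivially hold at $k=0$.

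For the inductive step, assume that $(\varrho_h^{k-1},\thetakmh,\fatu_h^{k-1})\in Q_h^+\times Q_h^+\times\bm{V}_h$ satisfies $c_\star<\thetakmh<c^\star$ in $\Omega_h$ together with the two $L^1$ identities at level $k-1$. Since the definition of a solution to the FE-FV method provides $(\varrho_h^k,\thetakh,\fatu_h^k)\in Q_h^+\times Q_h^+\times\bm{V}_{0,h}$ solving \eqref{r_method}--\eqref{m_method} for all test functions in $Q_h\times\bm{V}_{0,h}$, the hypotheses of Lemma~\ref{lem_sol}\hyperlink{lem_sol_ii}{(ii)} are met. Applying that lemma (with $\underline{c}=c_\star$, $\overline{c}=c^\star$) immediately yields $c_\star<\thetakh<c^\star$ in $\Omega_h$, which is (i), together with
\[
\lnorm{\varrho_h^k}{1}{(\Omega_h)}{0.55}{-0.5mm}=\lnorm{\varrho_h^{k-1}}{1}{(\Omega_h)}{0.55}{-0.5mm}=\lnorm{\varrho_h^0}{1}{(\Omega_h)}{0.55}{-0.5mm}
\]
and the analogous identity for $\varrho_h^k\thetakh$, which is (ii). Moreover, because $\bm{V}_{0,h}\subset\bm{V}_h$, the triplet $(\varrho_h^k,\thetakh,\fatu_h^k)$ meets the structural hypothesis $Q_h^+\times Q_h^+\times\bm{V}_h$ needed to iterate at the next level.

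There is no substantial obstacle here; the entire content has been placed in Lemma~\ref{lem_sol}, and the corollary only has to verify that its hypotheses propagate along the time steps. The only point that deserves a brief remark is the compatibility of the function spaces, namely that the output space $\bm{V}_{0,h}$ of one step embeds into the input space $\bm{V}_h$ required for the next; this is immediate from the definitions in Section~\ref{sec_projection}.
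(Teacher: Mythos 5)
Your induction is correct and is exactly the argument the paper intends: the paper simply remarks that the corollary follows "easily" from Lemma~\ref{lem_sol}, and your proof supplies the straightforward induction via Lemma~\ref{lem_sol}(ii) with $\underline{c}=c_\star$, $\overline{c}=c^\star$, anchored by \eqref{disc_int_1}--\eqref{disc_int_2}. The remark on $\bm{V}_{0,h}\subset\bm{V}_h$ is a correct (if minor) point of care.
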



\section{Stability}\label{sec_stability}
We continue by discussing the stability of the FE-FV method (\ref{r_method})--(\ref{m_method}) that follows from a discrete energy balance. For its derivation, we rely on the concept of (\textit{discrete}) \textit{renormalization}. The same technique will be used to establish a discrete entropy inequality.

\subsection{Discrete renormalization}
In the sequel, we shall state renormalized versions of (\ref{r_method}) and (\ref{z_method}) that describe the evolution of $b(\varrho_h^k)$ and $b(\varrho_h^k\thetakh)$, $\varrho_h^k b(\thetakh)$, respectively, where $b\in \Ck{2}(0,\infty)$. Together with suitable choices for the function $b$, the first two renormalized equations will help us to handle the pressure terms when deriving the discrete energy balance. The last equation will be used to establish the discrete entropy inequality.

\begin{lemma}\label{lem_disc_renom}
Let $(\varrho_h^k,\thetakh,\fatu_h^k)_{k\,\in\,\naturals}$ be a solution to the FE-FV method $\mathrm{(\ref{r_method})}$--$\mathrm{(\ref{m_method})}$ starting from the discrete initial data $\mathrm{(\ref{disc_int_1})}$. Further, let $(r_h^k)_{k\,\in\,\naturals_0}\in\{(\varrho_h^k)_{k\,\in\,\naturals_0},(\varrho_h^k\thetakh)_{k\,\in\,\naturals_0}\}$.
Then for every $(b,k)\in\Ck{2}(0,\infty)\times\naturals$ \hypertarget{lem_disc_renom_i}{}
\begin{enumerate}
    \item[$\mathrm{(i)}$]{there exist values $(\xi^{(1)}_{r,b,k,\sigma})_{\sigma\,\in\,\gint},(\xi^{(2)}_{r,b,k,\sigma})_{\sigma\,\in\,\gint}\subset\reals$ satisfying 
    \begin{align*}
        \min\{(r^k_h)^{\mathrm{in},\hspace{0.3mm}\sigma}(\fatx_\sigma),(r^k_h)^{\mathrm{out},\hspace{0.3mm}\sigma}(\fatx_\sigma)\}\leq\xi^{(1)}_{r,b,k,\sigma},\xi^{(2)}_{r,b,k,\sigma}\leq\max\{(r^k_h)^{\mathrm{in},\hspace{0.3mm}\sigma}(\fatx_\sigma),(r^k_h)^{\mathrm{out},\hspace{0.3mm}\sigma}(\fatx_\sigma)\}
    \end{align*}
    and a function $\xi_{r,b,k}\in Q_h$ satisfying
    \begin{align*}
        \min\{r^{k-1}_h,r^k_h\}\leq\xi_{r,b,k}\leq\max\{r^{k-1}_h,r^k_h\}
    \end{align*}
    such that 
    \begin{align}
        0 &= \intoh \Dt \hspace{0.3mm} b(r^k_h)\;\dx + \intoh\big(\bp(r^k_h)r^k_h-b(r^k_h)\big)\,\divh(\fatu^k_h)\;\dx \notag \\[2mm]  
        &\phantom{\,=\,}+\frac{1}{2}\intoh \bpp(\xi_{r,b,k})\,\frac{(r^k_h-r^{k-1}_h)^2}{\deltat}\;\dx + \hee\int_{\mathcal{E}_{\mathrm{int}}} \jump{r^k_h}\,\jump{\bp(r^k_h)}\;\dsx \notag \\[2mm]
        &\phantom{\,=\,}+\frac{1}{2}\int_{\mathcal{E}_{\mathrm{int}}} \big(\bpp(\xi^{(1)}_{r,b,k,\sigma})\big[\omean{\fatu^k_h\cdot\ngamma}\big]^+ - \bpp(\xi^{(2)}_{r,b,k,\sigma})\big[\omean{\fatu_h^k\cdot\ngamma}\big]^-\big)\,\jump{r^k_h}^2\;\dsx\,; \hypertarget{lem_disc_renom_ii}{}
    \end{align}} 
    \item[$\mathrm{(ii)}$]{there exist values $(\zeta^{(1)}_{b,k,\sigma})_{\sigma\,\in\,\gint},(\zeta^{(2)}_{b,k,\sigma})_{\sigma\,\in\,\gint}\subset\reals$ satisfying
    \begin{align*}
        \min\{(\thetakh)^{\mathrm{in},\hspace{0.3mm}\sigma}(\fatx_\sigma),(\thetakh)^{\mathrm{out},\hspace{0.3mm}\sigma}(\fatx_\sigma)\}\leq\zeta^{(1)}_{b,k,\sigma},\zeta^{(2)}_{b,k,\sigma}\leq\max\{(\thetakh)^{\mathrm{in},\hspace{0.3mm}\sigma}(\fatx_\sigma),(\thetakh)^{\mathrm{out},\hspace{0.3mm}\sigma}(\fatx_\sigma)\}
    \end{align*}
    and a function $\zeta_{b,k}\in Q_h$ satisfying
    \begin{align*}
        \min\{\thetakmh,\thetakh\}\leq\zeta_{b,k}\leq\max\{\thetakmh,\thetakh\}
    \end{align*}
    such that 
    \begin{align}
        0 &= \intoh \Dt(\varrho_h^k b(\thetakh))\,\psi_h\;\dx - \inteint \upp{\varrho_h^k b(\thetakh)}{\fatu_h^k}\jump{\psi_h}\;\dsx \notag \\[2mm]
        & \phantom{\,=\,} + \frac{\deltat}{2}\intoh \bpp(\zeta_{b,k})\,\varrho_h^{k-1}\left(\frac{\thetakh-\thetakmh}{\deltat}\right)^2\psi_h\;\dx + \hee \inteint \jump{\varrho_h^k \thetakh}\,\jump{\bp(\thetakh)\,\psi_h}\;\dsx \notag \\[2mm]
        & \phantom{\,=\,} +  \frac{1}{2}\int_{\mathcal{E}_{\mathrm{int}}} \big(\bpp(\zeta^{(1)}_{b,k,\sigma})\varrho_h^k\psi_h^{\,\mathrm{out}}\big[\omean{\fatu^k_h\cdot\ngamma}\big]^+ - \bpp(\zeta^{(2)}_{b,k,\sigma})(\varrho_h^k)^{\mathrm{out}}\psi_h\big[\omean{\fatu_h^k\cdot\ngamma}\big]^-\big)\,\jump{\thetakh}^2\;\dsx \notag \\[2mm]
        & \phantom{\,=\,} + \hee \inteint \jump{\varrho_h^k}\Jump{\big(b(\thetakh)-\bp(\thetakh)\thetakh\big)\psi_h}\dsx 
    \end{align}
    for all $\psi_h\in Q_h$.}
\end{enumerate}

\end{lemma}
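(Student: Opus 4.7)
The plan is to derive both identities by testing the appropriate discrete transport equation with a suitably chosen $Q_h$-function and then applying second-order Taylor expansion, in integral (Lagrange) form, in both time and space. Since $r_h^k,\thetakh\in Q_h^+$ by Lemma \ref{lem_sol}, the compositions $b(r_h^k)$, $b'(r_h^k)$, $b(\thetakh)$, $b'(\thetakh)$ are well defined $Q_h$-functions, so they are admissible as discrete test functions.

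For part (i), I would test equation \eqref{r_method} if $r_h^k=\varrho_h^k$, or equation \eqref{z_method} if $r_h^k=\varrho_h^k\thetakh$, with $\phi_h=\bp(r_h^k)\in Q_h$. The time derivative term is rewritten by applying the Taylor expansion
\[
\bp(r_h^k)(r_h^k-r_h^{k-1}) \;=\; b(r_h^k)-b(r_h^{k-1}) + \tfrac{1}{2}\bpp(\xi_{r,b,k})(r_h^k-r_h^{k-1})^2
\]
on each cell, with $\xi_{r,b,k}$ provided by the Lagrange remainder and satisfying the claimed bounds. For the upwind flux term I use the symmetric representation
\[
F_h^{\mathrm{up}}[r,u] = \{r\}\omean{u\cdot\ngamma} - \tfrac{1}{2}\jump{r}\bigl(\he+|\omean{u\cdot\ngamma}|\bigr).
\]
The $\he$-diffusion part directly yields the jump term $\hee\int_{\mathcal{E}_{\mathrm{int}}}\jump{r_h^k}\jump{\bp(r_h^k)}$. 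The remaining convective contribution on each edge reduces, by a case split on the sign of $\omean{u\cdot\ngamma}$, to $-r^{\mathrm{in}}\omean{u\cdot\ngamma}\jump{\bp(r)}$ (resp.\ $-r^{\mathrm{out}}\omean{u\cdot\ngamma}\jump{\bp(r)}$). The key algebraic step is the integral identity
\[
\jump{r\bp(r)-b(r)} - r^{\mathrm{in}}\jump{\bp(r)} = \int_{r^{\mathrm{in}}}^{r^{\mathrm{out}}}(s-r^{\mathrm{in}})\bpp(s)\,\dd s = \tfrac{1}{2}\bpp(\xi^{(1)}_{r,b,k,\sigma})\jump{r}^2,
\]
and analogously for $r^{\mathrm{out}}$ with $-\tfrac{1}{2}\bpp(\xi^{(2)}_{r,b,k,\sigma})\jump{r}^2$, obtained by the mean value theorem for integrals. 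Summing over edges and using the discrete Gauss identity $\int_{\Omega_h}\divh(u)\phi_h\,\dx=-\int_{\mathcal{E}_{\mathrm{int}}}\omean{u\cdot\ngamma}\jump{\phi_h}\,\dsx$ for $\phi_h\in Q_h$, $u\in\bm{V}_{0,h}$, converts $-\int\omean{u\cdot\ngamma}\jump{r\bp(r)-b(r)}$ into $\int_{\Omega_h}(\bp(r)r-b(r))\divh(u)\,\dx$, which completes (i).

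For part (ii), the test function $\psi_h$ appears, so I first rewrite the discrete time derivative using two Taylor expansions: the one of $b$ around $\thetakh$,
\[
\varrho_h^{k-1}\bigl(b(\thetakh)-b(\thetakmh)\bigr) = \varrho_h^{k-1}\bp(\thetakh)(\thetakh-\thetakmh) - \tfrac{\Delta t^2}{2}\bpp(\zeta_{b,k})\varrho_h^{k-1}(\Dt\thetakh)^2,
\]
together with the algebraic identity $\varrho_h^{k-1}\Dt\thetakh = \Dt(\varrho_h^k\thetakh)-\thetakh\Dt\varrho_h^k$. This yields
\[
\Dt(\varrho_h^k b(\thetakh)) = \bigl(b(\thetakh)-\thetakh\bp(\thetakh)\bigr)\Dt\varrho_h^k + \bp(\thetakh)\Dt(\varrho_h^k\thetakh) - \tfrac{\Delta t}{2}\bpp(\zeta_{b,k})\varrho_h^{k-1}(\Dt\thetakh)^2.
\]
Now I multiply by $\psi_h$, integrate, and substitute the right-hand sides of \eqref{r_method} tested with $(b(\thetakh)-\thetakh\bp(\thetakh))\psi_h\in Q_h$ and \eqref{z_method} tested with $\bp(\thetakh)\psi_h\in Q_h$ for the two $\Dt$-terms. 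The resulting edge integrals are then manipulated exactly as in part (i): the $\he$-parts recombine into $\hee\int\jump{\varrho_h^k\thetakh}\jump{\bp(\thetakh)\psi_h} + \hee\int\jump{\varrho_h^k}\Jump{(b(\thetakh)-\bp(\thetakh)\thetakh)\psi_h}$, while the convective parts, after applying the integral mean value theorem to $\bpp$ between $(\thetakh)^{\mathrm{in}}$ and $(\thetakh)^{\mathrm{out}}$, produce the required $\zeta^{(1)}_{b,k,\sigma}, \zeta^{(2)}_{b,k,\sigma}$ jump remainders and recombine into the upwind flux $\upp{\varrho_h^k b(\thetakh)}{\fatu_h^k}$.

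The main obstacle is the bookkeeping in step (ii): the test function $\psi_h$ and the presence of the density factor $\varrho_h^k$ in the upwind flux force us to carefully combine two discrete equations, and the Taylor remainders on each edge must be arranged so that $\zeta^{(1)}$ and $\zeta^{(2)}$ are multiplied exactly by $\varrho_h^k \psi_h^{\mathrm{out}}[\omean{\fatu^k_h\cdot\ngamma}]^+$ and $(\varrho_h^k)^{\mathrm{out}}\psi_h[\omean{\fatu_h^k\cdot\ngamma}]^-$ respectively. This asymmetry reflects the upwind selection of the density, and checking that the pieces from the two tested equations add up with the correct density factors is the only delicate point of the argument; everything else is the same Taylor/mean-value/discrete-Gauss machinery used in part (i).
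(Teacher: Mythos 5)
Your proposal is correct and follows essentially the same route as the paper, which only sketches the argument by reference: test the relevant transport equation with $\bp(r_h^k)$ (resp.\ $\bp(\thetakh)\psi_h$), then use Taylor expansions with Lagrange remainder in time and across each face together with the discrete Gauss identity; your face-by-face identities and the bookkeeping of the upwind density factors in (ii) all check out. The only thing you make explicit that the paper's hint leaves implicit is that part (ii) also requires testing the continuity equation \eqref{r_method} with $\big(b(\thetakh)-\thetakh\bp(\thetakh)\big)\psi_h$, which is indeed necessary to produce the last jump term in the identity.
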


\begin{proof}
The proof of assertion (\hyperlink{lem_disc_renom_i}{i}) can be found in \citep[Lemma 5.1]{Karlsen_Karper_2}.
The main idea is to take $\phi_h=\bp(\varrho^k_h)\mathds{1}_{\overline{\Omega_h}}$ in (\ref{r_method}) and $\phi_h=\bp(\varrho^k_h\thetakh)\mathds{1}_{\overline{\Omega_h}}$ in (\ref{z_method}) and to rewrite the results by means of basic algebraic manipulations, Gauss's theorem, and Taylor expansions. 

Assertion (\hyperlink{lem_disc_renom_ii}{ii}) can be proven similarly; see, e.g., \citep[Lemma A.1 with $h^{1-\varepsilon}$ replaced by $\he/2$]{Navier_Stokes_Fourier}. Here, one chooses $\phi_h=\bp(\thetakh)\psi_h$ in (\ref{z_method}).
\end{proof}

\subsection{Discrete energy balance}
We now have all necessary tools at hand to establish the energy balance for our numerical method. 

\begin{lemma}
Let $(\varrho_h^k,\thetakh,\fatu_h^k)_{k\,\in\,\naturals}$ be a solution to the FE-FV method $\mathrm{(\ref{r_method})}$--$\mathrm{(\ref{m_method})}$ starting from the discrete initial data $\mathrm{(\ref{disc_int_1})}$ and $P$ the pressure potential introduced in $\mathrm{(\ref{press_pot})}$. Denoting the discrete energy at the time level $k\in\naturals_0$ by
\begin{align}
E_h^k\equiv E_h^k(\varrho_h^k,\thetakh,\fatu_h^k) = \intoh \left[\,\frac{1}{2}\,\varrho_h^k|\overline{\fatu_h^k}|^2+P(\varrho_h^k\thetakh)+\hd\big((\varrho_h^k)^2+(\varrho_h^k\thetakh)^2\big)\right]\dx\,,
\end{align}
we deduce that
\begin{align}
    &\Dt E_h^k + \intoh\left[\,\mu\hspace{0.3mm}|\gradh\fatu_h^k|^2+\nu\hspace{0.3mm}|\divh(\fatu_h^k)|^2\,\right]\dx \notag \\[2mm]
    & \quad = -\,\frac{1}{2}\intoh \Ppp(\xi_{\varrho\theta,P,k})\,\frac{(\varrho_h^k\thetakh-\varrho_h^{k-1}\thetakmh)^2}{\deltat}\;\dx -\frac{\he}{2}\inteint\,\jump{\varrho^k_h\thetakh}\,\jump{\Pp(\varrho^k_h\thetakh)}\;\dsx \notag \\[2mm]
    & \quad \phantom{\,=\,} -\frac{1}{2}\inteint \left(\Ppp(\xi^{(1)}_{\varrho\theta,P,k,\sigma})\big[\smallgmean{\fatu_h^k\cdot\ngamma}\big]^+ - \Ppp(\xi^{(2)}_{\varrho\theta,P,k,\sigma})\big[\smallgmean{\fatu_h^k\cdot\ngamma}\big]^-\right)\jump{\varrho^k_h\thetakh}^2\;\dsx \notag \\[2mm]
    & \quad \phantom{\,=\,} - \intoh\frac{\deltat}{2}\,\varrho^{k-1}_h\left|\frac{\overline{\fatu_h^k}-\overline{\fatu_h^{k-1}}}{\deltat}\right|^2\dx - \frac{\he}{2}\inteint\,\mean{\varrho^k_h}\Jump{\overline{\bm{\fatu}_h^k}\vphantom{\overline{\overline{\phi}}}}^2\dsx \notag \\[2mm]
    & \quad \phantom{\,=\,} -\frac{1}{2}\inteint \left((\varrho^k_h)^{\mathrm{in}}\big[\smallgmean{\fatu_h^k\cdot\ngamma}\big]^+ - (\varrho^k_h)^{\mathrm{out}}\big[\smallgmean{\fatu_h^k\cdot\ngamma}\big]^-\right)\Jump{\overline{\bm{\fatu}_h^k}\vphantom{\overline{\overline{\phi}}}}^2\dsx \notag \\[2mm]
    & \quad \phantom{\,=\,} -\hd\intoh \frac{(\varrho_h^k-\varrho_h^{k-1})^2}{\deltat}\;\dx -\hd\inteint\big(\he+|\smallgmean{\fatu_h^k\cdot\ngamma}|\big)\,\jump{\varrho^k_h}^2\;\dsx \notag \\[2mm]
    & \quad \phantom{\,=\,} -\hd\intoh \frac{(\varrho_h^k\thetakh-\varrho_h^{k-1}\thetakmh)^2}{\deltat}\;\dx -\hd\inteint\big(\he+|\smallgmean{\fatu_h^k\cdot\ngamma}|\big)\,\jump{\varrho^k_h\thetakh}^2\;\dsx\,, \label{disc_energy}
\end{align}
for all $k\in\naturals$, where $\xi_{\varrho\theta,P,k}\in Q_h$ and $(\xi^{(1)}_{\varrho\theta,P,k,\sigma})_{\sigma\,\in\,\gint},(\xi^{(2)}_{\varrho\theta,P,k,\sigma})_{\sigma\,\in\,\gint}\subset\reals$ are chosen as in Lemma~$\mathrm{\ref{lem_disc_renom}(\hyperlink{lem_sic_renom_i}{i})}$.
\end{lemma}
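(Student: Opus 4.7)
The plan is to mimic the continuous energy argument: multiply the discrete momentum equation by the velocity, use the continuity equation tested against $\tfrac12|\fatu|^2$ to absorb the resulting lower-order term, and convert each of the three discrete pressure contributions into a time derivative of the corresponding potential by means of the renormalization identity in Lemma~\ref{lem_disc_renom}(\hyperlink{lem_disc_renom_i}{i}).

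The five ingredients are the following. \textbf{(a)} Equation \eqref{m_method} tested with $\bm{\phi}_h = \fatu_h^k\in\bm{V}_{0,h}$, using that $\Dt(\varrho_h^k\overline{\fatu_h^k})\in\bm{Q}_h$ to replace $\fatu_h^k$ by $\overline{\fatu_h^k}$ in the time-derivative term. \textbf{(b)} Equation \eqref{r_method} tested with $\phi_h = \tfrac12|\overline{\fatu_h^k}|^2\in Q_h$. \textbf{(c)} Lemma~\ref{lem_disc_renom}(\hyperlink{lem_disc_renom_i}{i}) with $r_h^k = \varrho_h^k\thetakh$ and $b = P$, which in view of $P'(z)z - P(z) = \p(z)$ produces $\intoh\Dt P(\varrho_h^k\thetakh)\,\dx + \intoh\p(\varrho_h^k\thetakh)\divh(\fatu_h^k)\,\dx$ plus the three $\Ppp$-dissipation terms displayed in \eqref{disc_energy}. \textbf{(d,e)} The same lemma applied to $r_h^k\in\{\varrho_h^k,\varrho_h^k\thetakh\}$ with $b(r) = \hd r^2$; since $b'(r)r - b(r) = \hd r^2$ and $b''(r)\equiv 2\hd$, this produces $\intoh\Dt(\hd(r_h^k)^2)\,\dx + \intoh\hd(r_h^k)^2\divh(\fatu_h^k)\,\dx$ plus the announced time-difference penalization and, using $\hee\cdot 2\hd = \he\hd$ together with $[F]^{+}-[F]^{-}=|F|$, the single face penalization $\hd(\he+|\smallgmean{\fatu_h^k\cdot\ngamma}|)\jump{r_h^k}^2$ for each $r_h^k$. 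Forming (a)$-$(b)$+$(c)$+$(d)$+$(e) cancels the three $\divh(\fatu_h^k)$-work terms against their renormalized counterparts, and the algebraic identity
\begin{equation*}
\Dt(\varrho_h^k\overline{\fatu_h^k})\cdot\overline{\fatu_h^k} - \tfrac12\Dt(\varrho_h^k)|\overline{\fatu_h^k}|^2 = \Dt\!\bigl(\tfrac12\varrho_h^k|\overline{\fatu_h^k}|^2\bigr) + \tfrac{\deltat}{2}\,\varrho_h^{k-1}\,|\Dt\overline{\fatu_h^k}|^2
\end{equation*}
turns the remaining time-derivative contributions into $\Dt E_h^k$ plus the $\tfrac{\deltat}{2}\intoh\varrho_h^{k-1}|\Dt\overline{\fatu_h^k}|^2\,\dx$ penalization appearing in \eqref{disc_energy}.

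It remains to identify the combined convective face contribution of (a) and (b). Setting $F = \smallgmean{\fatu_h^k\cdot\ngamma}$ and using the definition $\upG{r}{\fatu_h^k} = \meanG{r}F - \tfrac12\jumpG{r}(\he+|F|)$ together with the elementary identities
\begin{equation*}
\jumpG{a\fatb}\cdot\jumpG{\fatb} = \meanG{a}|\jumpG{\fatb}|^2 + \jumpG{a}\meanG{\fatb}\cdot\jumpG{\fatb}\,, \qquad \tfrac12\jumpG{|\fatb|^2} = \meanG{\fatb}\cdot\jumpG{\fatb}\,,
\end{equation*}
\begin{equation*}
\meanG{a\fatb} = \meanG{a}\meanG{\fatb}+\tfrac14\jumpG{a}\jumpG{\fatb}\,, \qquad |F|\pm F = 2[F]^{\pm}\,,
\end{equation*}
one verifies that the cross terms cancel and
\begin{equation*}
-\upG{\varrho_h^k\overline{\fatu_h^k}}{\fatu_h^k}\cdot\JumpG{\overline{\fatu_h^k}} + \tfrac12\upG{\varrho_h^k}{\fatu_h^k}\jumpG{|\overline{\fatu_h^k}|^2} = \Bigl[\tfrac{\he}{2}\meanG{\varrho_h^k}+\tfrac12\bigl((\varrho_h^k)^{\mathrm{in}}[F]^{+}-(\varrho_h^k)^{\mathrm{out}}[F]^{-}\bigr)\Bigr]|\JumpG{\overline{\fatu_h^k}}|^2\,.
\end{equation*}
Integration over $\mathcal{E}_{\mathrm{int}}$ yields, with a minus sign, precisely the two convective dissipation terms in \eqref{disc_energy}. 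Collecting all contributions gives \eqref{disc_energy}.

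The main technical step is the last face computation: one must expand both upwind fluxes, track the cross terms $\meanG{\varrho_h^k}\,F\,\meanG{\overline{\fatu_h^k}}\cdot\jumpG{\overline{\fatu_h^k}}$ and $\jumpG{\varrho_h^k}(\he+|F|)\meanG{\overline{\fatu_h^k}}\cdot\jumpG{\overline{\fatu_h^k}}$ generated by $\meanG{\varrho_h^k\overline{\fatu_h^k}}$ and $\jumpG{\varrho_h^k\overline{\fatu_h^k}}$, and verify their exact cancellation. Everything else consists of direct applications of Lemma~\ref{lem_disc_renom}(\hyperlink{lem_disc_renom_i}{i}) and the standard algebraic identity for the time-derivative of the kinetic energy.
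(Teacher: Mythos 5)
Your proposal is correct and follows essentially the same route as the paper: testing \eqref{m_method} with $\fatu_h^k$, testing \eqref{r_method} with $\tfrac12|\overline{\fatu_h^k}|^2$, invoking Lemma~\ref{lem_disc_renom}(\hyperlink{lem_disc_renom_i}{i}) with $b=P$ and $b=\hd(\cdot)^2$, and combining via the discrete kinetic-energy identity. The only difference is that you carry out explicitly the upwind face computation that the paper dismisses as "direct calculations," and your resulting identity (including the cancellation of the cross terms and the constants $\tfrac{\he}{2}\meanG{\varrho_h^k}$ and $\tfrac12((\varrho_h^k)^{\mathrm{in}}[F]^{+}-(\varrho_h^k)^{\mathrm{out}}[F]^{-})$) checks out.
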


\begin{proof}
The proof can be done following the arguments in \citep[Chapter 7.5]{Feireisl_Karper_Pokorny}. Therefore, we depict only the most important steps. First, taking $\bm{\phi}_h=\fatu_h^k$ in (\ref{m_method}) yields
\begin{align}
    &\intoh \Dt\!\left(\varrho_h^k\overline{\fatu_h^k}\,\right)\cdot\fatu_h^k\;\dx - \int_{\mathcal{E}_{\mathrm{int}}}\up{\varrho^k_h\overline{\fatu_h^k}}{\fatu_h^k}\cdot\Jump{\overline{\bm{\fatu}_h^k}\vphantom{\overline{\overline{\phi}}}}\dsx \notag \\[2mm]
    &+\intoh\left[\,\mu\hspace{0.3mm}|\gradh\fatu_h^k|^2+\nu\hspace{0.3mm}|\divh(\fatu_h^k)|^2\,\right]\dx -\intoh \left(\p(\varrho^k_h\thetakh)+\hd\!\left[(\varrho^k_h)^2+(\varrho_h^k\thetakh)^2\right]\right)\divh(\fatu_h^k)\;\dx = 0\,. \label{disc_energy_1}
\end{align}
Next, we observe that
\begin{align}
    &\intoh \Dt\!\left(\varrho_h^k\overline{\fatu_h^k}\,\right)\cdot\fatu_h^k\;\dx = \frac{1}{2}\intoh\left( \Dt\!\left(\varrho_h^k|\overline{\fatu_h^k}|^2\right) + (\Dt\varrho_h^k)|\overline{\fatu_h^k}|^2 + \deltat\,\varrho^{k-1}_h\left|\frac{\overline{\fatu_h^k}-\overline{\fatu_h^{k-1}}}{\deltat}\right|^2\right)\dx\,. \label{disc_energy_2}
\end{align}
Then, we use $\phi_h=\frac{1}{2}|\overline{\fatu_h^k}|^2$ as a test function in (\ref{r_method}) to deduce that
\begin{align}
    \frac{1}{2}\intoh (\Dt\varrho_h^k)|\overline{\fatu_h^k}|^2\;\dx = \frac{1}{2}\int_{\mathcal{E}_{\mathrm{int}}}\up{\varrho^k_h}{\fatu_h^k}\Jump{|\overline{\fatu_h^k}|^2}\,\dsx\,. \label{disc_energy_3}
\end{align}
Moreover, by applying Lemma \ref{lem_disc_renom}(\hyperlink{lem_disc_renom_i}{i}) with $b=P$ and $b=\hd(\cdot)^2$, we obtain
\begin{align}
    &\intoh \p(\varrho_h^k\thetakh)\,\divh(\fatu_h^k)\;\dx \notag \\[2mm]
    &= -\intoh\Dt P(\varrho_h^k\thetakh)\;\dx -\frac{1}{2}\intoh \Ppp(\xi_{\varrho\theta,P,k})\,\frac{(\varrho_h^k\thetakh-\varrho_h^{k-1}\thetakmh)^2}{\deltat}\;\dx -\frac{\he}{2}\inteint\,\jump{\varrho^k_h\thetakh}\,\jump{\Pp(\varrho^k_h\thetakh)}\;\dsx \notag \\[2mm]
    &\phantom{\,=\,} -\frac{1}{2}\inteint \left(\Ppp(\xi^{(1)}_{\varrho\theta,P,k,\sigma})\big[\smallgmean{\fatu_h^k\cdot\ngamma}\big]^+ - \Ppp(\xi^{(2)}_{\varrho\theta,P,k,\sigma})\big[\smallgmean{\fatu_h^k\cdot\ngamma}\big]^-\right)\jump{\varrho^k_h\thetakh}^2\;\dsx\,, \label{disc_energy_4} \\[4mm]
    &\hd\intoh (\varrho^k_h)^2\,\divh(\fatu_h^k)\;\dx \notag \\[2mm]
    &= -\hd\left[\intoh\Dt(\varrho^k_h)^2 \;\dx + \intoh \frac{(\varrho_h^k-\varrho_h^{k-1})^2}{\deltat}\;\dx +\inteint\big(\he+|\smallgmean{\fatu_h^k\cdot\ngamma}|\big)\,\jump{\varrho^k_h}^2\;\dsx\right], \label{disc_energy_5} \\[4mm]
    &\hd\intoh (\varrho^k_h\thetakh)^2\,\divh(\fatu_h^k)\;\dx \notag \\[2mm]
    &= -\hd\left[\intoh\Dt(\varrho^k_h\thetakh)^2 \;\dx +\intoh \frac{(\varrho_h^k\thetakh-\varrho_h^{k-1}\thetakmh)^2}{\deltat}\;\dx +\inteint\big(\he+|\smallgmean{\fatu_h^k\cdot\ngamma}|\big)\,\jump{\varrho^k_h\thetakh}^2\;\dsx\right]. \label{disc_energy_6}
\end{align}
Plugging (\ref{disc_energy_2})--(\ref{disc_energy_6}) into (\ref{disc_energy_1}), we see that we have almost arrived at (\ref{disc_energy}). Indeed, it only remains to show that
\begin{align*}
    &\frac{1}{2}\int_{\mathcal{E}_{\mathrm{int}}}\up{\varrho^k_h}{\fatu_h^k}\Jump{|\overline{\fatu_h^k}|^2}\dsx-\int_{\mathcal{E}_{\mathrm{int}}}\up{\varrho^k_h\overline{\fatu_h^k}}{\fatu_h^k}\cdot\Jump{\overline{\bm{\fatu}_h^k}\vphantom{\overline{\overline{\phi}}}}\dsx \\[2mm]
    & \quad = \frac{\he}{2}\inteint\,\mean{\varrho^k_h}\Jump{\overline{\bm{\fatu}_h^k}\vphantom{\overline{\overline{\phi}}}}^2\dsx + \frac{1}{2}\inteint \left((\varrho^k_h)^{\mathrm{in}}\big[\smallgmean{\fatu_h^k\cdot\ngamma}\big]^+ - (\varrho^k_h)^{\mathrm{out}}\big[\smallgmean{\fatu_h^k\cdot\ngamma}\big]^-\right)\Jump{\overline{\bm{\fatu}_h^k}\vphantom{\overline{\overline{\phi}}}}^2\dsx\,,
\end{align*}
which follows by direct calculations. This completes the proof. 
\end{proof}

\subsection{Time\hspace{0.3mm}-dependent numerical solutions and energy estimates}
Next, we formulate appropriate stability estimates for the time-dependent numerical solutions introduced below.

\begin{definition}
Let $(\varrho_h^k,\thetakh,\fatu_h^k)_{k\,\in\,\naturals}$ be a solution to the FE-FV method (\ref{r_method})--(\ref{m_method}) starting from the initial data $(\varrho_h^0,\thetazeroh,\fatu_h^0)$. We define the functions 
\begin{align*}
    \varrho_h,\thetah:\reals\times\Omega\to\reals\,, \quad \fatu_h:\reals\times\Omega\to\reals^d\,,
\end{align*}
that are piecewise constant in time by setting
\begin{align*}
    (\varrho_h,\thetah,\fatu_h)(t,\cdot) &= \left\{{\arraycolsep=0pt 
    \begin{array}{ll}
        \;(\varrho_h^k,\thetakh,\fatu_h^k) & \quad \text{if $t\in((k-1)\deltat,k\deltat]$ for some $k\in\naturals$ and} \\[2mm]
        \;(\varrho_h^0,\thetazeroh,\fatu_h^0) & \quad \text{if $t\leq 0$,}
    \end{array}}
    \right.
\end{align*}
\end{definition}


The most important stability estimates that can be obtained from the discrete energy balance (\ref{disc_energy}) read as follows.

\begin{corollary}[\textbf{Stability estimates}]\label{cor_stab}
Any solution $(\varrho_h,\thetah,\fatu_h)$ to the FE-FV method $\mathrm{(\ref{r_method})}$--$\mathrm{(\ref{m_method})}$ starting from the initial data $\mathrm{(\ref{disc_int_1})}$ has the following properties:
\begin{gather}
    \lnorm{\varrho_h|\overline{\fatu_h}|^2}{\infty}{(0,T;\Lp{1}{0.0mm}(\Omega_h))}{0.55}{0.0mm} \lesssim 1\,, \quad \lnorm{\varrho_h}{\infty}{(0,T;\Lp{\gamma}{0.5mm}(\Omega_h))}{0.55}{0.0mm} \lesssim 1\,, \quad \lnorm{\varrho_h\overline{\fatu_h}}{\infty}{(0,T;\Lp{2\gamma/(\gamma+1)}{0.0mm}(\Omega_h)^d)}{0.55}{0.0mm} \lesssim 1\,, \label{energy_est_1} \\[2mm]
    \lnorm{\gradh\fatu_h}{2}{(0,T;\Lp{2}{0.0mm}(\Omega_h)^{d\times d})}{0.55}{-0.5mm} \lesssim 1\,, \quad \lnorm{\divh(\fatu_h)}{2}{(0,T;\Lp{2}{0.0mm}(\Omega_h))}{0.55}{-0.5mm} \lesssim 1\,, \quad \lnorm{\fatu_h}{2}{(0,T;\Lp{q}{0.5mm}(\Omega_h)^d)}{0.55}{-0.5mm} \lesssim 1\,, \label{energy_est_2} \\[2mm]
    \lnorm{\varrho_h\thetah}{\infty}{(0,T;\Lp{\gamma}{0.5mm}(\Omega_h))}{0.55}{0.0mm} \lesssim 1\,, \quad  \lnorm{h^{\delta/2}\varrho_h}{\infty}{(0,T;\Lp{2}{0mm}(\Omega_h))}{0.55}{0.0mm} \lesssim 1\,, \quad \lnorm{h^{\delta/2}\varrho_h\thetah}{\infty}{(0,T;\Lp{2}{0mm}(\Omega_h))}{0.55}{0.0mm} \lesssim 1\,, \label{energy_est_3} \\[2mm]
    \lnorm{\varrho_h\thetah\overline{\fatu_h}}{\infty}{(0,T;\Lp{2\gamma/(\gamma+1)}{0.0mm}(\Omega_h)^d)}{0.55}{0.0mm} \lesssim 1\,, \quad \lnorm{\varrho_h\overline{\fatu_h}}{2}{(0,T;\Lp{2}{0.0mm}(\Omega_h)^d)}{0.55}{-0.5mm} \lesssim h^{-\frac{d+3\delta}{6}}\,, \label{energy_est_4} \\[2mm]
    \hd\intt\!\!\inteint\max\left\{\he,|\smallgmean{\fatu_h\cdot\ngamma}|\right\}\jump{\varrho_h}^2\;\dsxdt \lesssim 1\,, \label{energy_est_5} \\[2mm]
    \hd\intt\!\!\inteint\max\left\{\he,|\smallgmean{\fatu_h\cdot\ngamma}|\right\}\jump{\varrho_h\thetah}^2\;\dsxdt \lesssim 1\,, \label{energy_est_6} \\[2mm]
    \frac{\he}{2}\intt\!\!\inteint\,\mean{\varrho_h}\jump{\overline{\bm{\fatu}_h}}^2\;\dsxdt \lesssim 1\,, \label{energy_est_7} \\[2mm]
    \frac{1}{2}\intt\!\!\inteint \left(\varrho_h^{\,\mathrm{in}}\big[\smallgmean{\fatu_h\cdot\ngamma}\big]^+ - \varrho_h^{\,\mathrm{out}}\big[\smallgmean{\fatu_h\cdot\ngamma}\big]^-\right)\jump{\overline{\bm{\fatu}_h}}^2\;\dsxdt \lesssim 1\,, \label{energy_est_8} \\[2mm]
    \intt\!\!\inteint \big|\jump{\varrho_h}\smallgmean{\fatu_h\cdot\ngamma}\big|\;\dsxdt \lesssim h^{-\delta/2}(1+h^{-1/2})\,, \label{energy_est_9} \\[2mm]
    \intt\!\!\inteint \big|\jump{\varrho_h\thetah}\smallgmean{\fatu_h\cdot\ngamma}\big|\;\dsxdt \lesssim h^{-\delta/2}(1+h^{-1/2})\,, \label{energy_est_10}
\end{gather}
where $q\in[1,\infty)$ if $d=2$ and $q\in[1,6]$ if $d=3$.
\end{corollary}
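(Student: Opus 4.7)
\textbf{Proof plan for Corollary~\ref{cor_stab}.}
The whole chain of estimates follows from a single energy identity, namely the discrete balance \eqref{disc_energy}, by exploiting that on its right-hand side all terms are non-positive (the pressure potential $P$ is convex, so $P'' \geq 0$; the jump terms $\jump{r}\jump{P'(r)}$ have the sign of $(P'(b)-P'(a))(b-a)\geq 0$; the upwind factors $[\,\cdot\,]^+$, $-[\,\cdot\,]^-$ are non-negative). I would first rewrite \eqref{disc_energy} as an equality of the form $\Dt E_h^k + \text{(viscous dissipation)} + \text{(non-negative numerical dissipation)} = 0$, multiply by $\deltat$, and sum over $k=1,\dots,m$ for an arbitrary $m \in \{1,\dots,N_T\}$. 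The telescoping of $\Dt E_h^k$ yields
\begin{align*}
E_h^m + \deltat\sum_{k=1}^m \intoh\!\!\big[\mu|\gradh\fatu_h^k|^2+\nu|\divh\fatu_h^k|^2\big]\dx + \sum_{k=1}^m (\text{all remaining non-negative terms}) \,=\, E_h^0\,.
\end{align*}
The right-hand side $E_h^0$ is uniformly bounded thanks to the hypotheses on the initial data in Theorem~\ref{thm_main} and the $L^\infty$-stability of $\piq,\piv$; this is the master estimate from which everything else flows.

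From this master estimate, the bounds \eqref{energy_est_1}, \eqref{energy_est_3} on $\varrho_h|\overline{\fatu_h}|^2$, $\varrho_h\theta_h$, $h^{\delta/2}\varrho_h$ and $h^{\delta/2}\varrho_h\theta_h$ in $L^\infty(L^p)$ follow by reading off the corresponding summands in $E_h^m$ (using $P(z)\sim z^\gamma$ and Corollary~\ref{cor_sol} for the $\theta_h$-bound separating $\varrho_h$ from $\varrho_h\theta_h$). The two momentum-flux bounds in \eqref{energy_est_1}, \eqref{energy_est_4} are Hölder consequences: $\varrho|u| = \sqrt{\varrho}\cdot(\sqrt{\varrho}|u|)$ gives $L^{2\gamma/(\gamma+1)}$ via the already-proven bounds for $\sqrt{\varrho}\in L^\infty(L^{2\gamma})$ and $\sqrt{\varrho}|u|\in L^\infty(L^2)$. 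The viscous estimates \eqref{energy_est_2} on $\gradh\fatu_h$ and $\divh\fatu_h$ come from the explicit viscous terms in the summed balance; then $\|\fatu_h\|_{L^2(L^q)}$ follows via the discrete Sobolev--Poincaré inequality for Crouzeix--Raviart functions in $\bm{V}_{0,h}$. The jump-square bounds \eqref{energy_est_5}--\eqref{energy_est_8} are exactly the non-negative stabilization terms kept from the summed energy balance, divided out by $\hd$ or $\he$.

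The bound \eqref{energy_est_4} for $\|\varrho_h\overline{\fatu_h}\|_{L^2(L^2)}$ is the first non-trivial one and needs an interpolation step. I would write $\|\varrho_h\overline{\fatu_h}\|_{L^2} \leq \|\varrho_h\|_{L^3}\,\|\overline{\fatu_h}\|_{L^6}$, use the discrete Sobolev embedding to control $\|\overline{\fatu_h}\|_{L^6}$ by $\|\gradh\fatu_h\|_{L^2}$ (which is in $L^2(0,T)$), and estimate $\|\varrho_h\|_{L^3}$ by the inverse inequality $\|\varrho_h\|_{L^3}\lesssim h^{-d/6}\|\varrho_h\|_{L^2}\lesssim h^{-d/6-\delta/2}$, combining \eqref{energy_est_3}. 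Integrating in time yields the exponent $-(d+3\delta)/6$.

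The main obstacle is \eqref{energy_est_9}--\eqref{energy_est_10}. Here I would split the set of interior faces according to whether $|\smallgmean{\fatu_h\cdot\ngamma}|\leq \he$ or not. On the subcritical part the Cauchy--Schwarz estimate in $(t,\sigma)$ together with the $\varepsilon$-term $\hee\inteint \jump{\varrho_h}^2$ coming from \eqref{energy_est_5}/\eqref{energy_est_6} (after dividing by $\hd$) bounds $\he\int\!\int|\jump{\varrho_h}|$ by $h^{-\delta/2+\varepsilon/2}$. On the supercritical part one applies Cauchy--Schwarz to split $|\jump{\varrho_h}||\smallgmean{\fatu_h\cdot\ngamma}| = (|\jump{\varrho_h}||\smallgmean{\fatu_h\cdot\ngamma}|^{1/2})\cdot|\smallgmean{\fatu_h\cdot\ngamma}|^{1/2}$, using \eqref{energy_est_5}/\eqref{energy_est_6} for the first factor and a trace inequality together with the Poincaré bound $\|\fatu_h\|_{L^2(\mathcal{E}_\mathrm{int})}^2 \lesssim h^{-1}\|\gradh\fatu_h\|_{L^2(\Omega_h)}^2$ followed by Cauchy--Schwarz in time for the second. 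The same argument carries over verbatim for $\jump{\varrho_h\theta_h}$, yielding \eqref{energy_est_10}. The sign-bookkeeping that makes every RHS summand of \eqref{disc_energy} non-positive, and the interpolation/trace gymnastics in \eqref{energy_est_4} and \eqref{energy_est_9}--\eqref{energy_est_10}, are where care is required; the rest is accounting.
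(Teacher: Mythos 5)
Your proposal is correct and follows essentially the same route as the paper: telescoping the discrete energy balance \eqref{disc_energy} using the non-positivity of its right-hand side, bounding $E_h^0$ via the projection stability, reading off \eqref{energy_est_1}--\eqref{energy_est_3} and \eqref{energy_est_5}--\eqref{energy_est_8} from the retained dissipation terms, and obtaining \eqref{energy_est_4} by the same $L^3$--$L^6$ H\"older/inverse-inequality interpolation and \eqref{energy_est_9}--\eqref{energy_est_10} by the same weighted Cauchy--Schwarz against \eqref{energy_est_5}/\eqref{energy_est_6} plus a trace estimate. The only cosmetic difference is your sub/supercritical face splitting for \eqref{energy_est_9}--\eqref{energy_est_10}, which the paper avoids by bounding $|\smallgmean{\fatu_h\cdot\ngamma}|\leq\max\{\he,|\smallgmean{\fatu_h\cdot\ngamma}|\}$ in a single Cauchy--Schwarz (and note your subcritical bound should carry an extra $h^{-1/2}$ from the total measure of $\mathcal{E}_{h,\mathrm{int}}$, which is harmlessly absorbed by the stated right-hand side).
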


\begin{proof}
The proof is provided in Appendix \ref{app_stab}.
\end{proof}

\subsection{Discrete entropy inequality}
We conclude this section by stating a discrete entropy inequality. It is obtained by taking $b=\chi$ in Lemma \ref{lem_disc_renom}(\hyperlink{lem_disc_renom_ii}{ii}).

\begin{lemma}\label{lem_disc_entropy_inequ}
Let $(\varrho_h^k,\thetakh,\fatu_h^k)_{k\,\in\,\naturals}$ be a solution to the FE-FV method $\mathrm{(\ref{r_method})}$--$\mathrm{(\ref{m_method})}$ starting from the discrete initial data $\mathrm{(\ref{disc_int_1})}$ and $\chi\in\Ck{2}(0,\infty)$ a concave function. Then
\begin{align}
    0 &\leq \intoh \Dt(\varrho_h^k \chi(\thetakh))\,\psi_h\;\dx - \inteint \upp{\varrho_h^k \chi(\thetakh)}{\fatu_h^k}\jump{\psi_h}\;\dsx \notag \\[2mm]
    & \phantom{\,=\,} + \hee \inteint \jump{\varrho_h^k \thetakh}\,\jump{\chip(\thetakh)\psi_h}\;\dsx + \, \hee \inteint \jump{\varrho_h^k}\Jump{\big(\chi(\thetakh)-\chip(\thetakh)\thetakh\big)\psi_h}\dsx \label{disc_entropy_inequ}
\end{align}
for all $\psi_h\in Q_h^{0,+}$.
\end{lemma}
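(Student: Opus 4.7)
The plan is to apply Lemma~\ref{lem_disc_renom}(\hyperlink{lem_disc_renom_ii}{ii}) with the choice $b=\chi$ and then discard the two remaining terms using the concavity assumption $\chi''\leq 0$ together with the sign properties of the quantities involved. Concretely, substituting $b=\chi$ into the identity of Lemma~\ref{lem_disc_renom}(\hyperlink{lem_disc_renom_ii}{ii}) yields
\begin{align*}
    0 &= \intoh \Dt(\varrho_h^k \chi(\thetakh))\,\psi_h\;\dx - \inteint \upp{\varrho_h^k \chi(\thetakh)}{\fatu_h^k}\jump{\psi_h}\;\dsx \\
      &\phantom{=\,}+ \hee \inteint \jump{\varrho_h^k \thetakh}\,\jump{\chip(\thetakh)\psi_h}\;\dsx + \hee \inteint \jump{\varrho_h^k}\Jump{\big(\chi(\thetakh)-\chip(\thetakh)\thetakh\big)\psi_h}\dsx \\
      &\phantom{=\,}+ \frac{\deltat}{2}\intoh \chi''(\zeta_{\chi,k})\,\varrho_h^{k-1}\left(\frac{\thetakh-\thetakmh}{\deltat}\right)^2\psi_h\;\dx \\
      &\phantom{=\,}+ \frac{1}{2}\inteint\!\Big(\chi''(\zeta^{(1)}_{\chi,k,\sigma})\,\varrho_h^k\,\psi_h^{\,\mathrm{out}}\big[\smallgmean{\fatu^k_h\cdot\ngamma}\big]^+ - \chi''(\zeta^{(2)}_{\chi,k,\sigma})(\varrho_h^k)^{\mathrm{out}}\psi_h\big[\smallgmean{\fatu_h^k\cdot\ngamma}\big]^-\Big)\jump{\thetakh}^2\dsx\,.
\end{align*}

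The key observation is that the last two lines are nonpositive. Indeed, in the volumetric term, $\psi_h\geq 0$ by the assumption $\psi_h\in Q_h^{0,+}$, $\varrho_h^{k-1}>0$ by Corollary~\ref{cor_sol}, the squared difference quotient is nonnegative, and $\chi''(\zeta_{\chi,k})\leq 0$ by concavity (note $\zeta_{\chi,k}$ takes values between $\thetakmh$ and $\thetakh$, which are positive by Corollary~\ref{cor_sol}, so $\chi''$ is well-defined there). For the face integral, observe that $\psi_h,\psi_h^{\,\mathrm{out}}\geq 0$, $\varrho_h^k,(\varrho_h^k)^{\mathrm{out}}\geq 0$, and $\jump{\thetakh}^2\geq 0$; moreover $[\smallgmean{\fatu_h^k\cdot\ngamma}]^+\geq 0$ and $[\smallgmean{\fatu_h^k\cdot\ngamma}]^-\leq 0$. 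Combined with $\chi''\leq 0$, the first summand in the integrand is nonpositive and the second summand, thanks to the leading minus sign, is also nonpositive.

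Moving these two nonpositive contributions to the left-hand side transforms the equality into the desired inequality~\eqref{disc_entropy_inequ}. There is no serious obstacle here; the only point requiring care is to verify that the arguments $\zeta_{\chi,k}$ and $\zeta^{(j)}_{\chi,k,\sigma}$ at which $\chi''$ is evaluated all lie in $(0,\infty)$, so that concavity can be invoked. This is guaranteed by the bounds on $\zeta_{\chi,k}$ and $\zeta^{(j)}_{\chi,k,\sigma}$ stated in Lemma~\ref{lem_disc_renom}(\hyperlink{lem_disc_renom_ii}{ii}) together with the strict positivity of $\thetah^{k-1},\thetah^k$ from Corollary~\ref{cor_sol}(\hyperlink{cor_ii}{i}).
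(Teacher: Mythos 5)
Your proposal is correct and is exactly the paper's own argument: the paper obtains Lemma~\ref{lem_disc_entropy_inequ} precisely by taking $b=\chi$ in Lemma~\ref{lem_disc_renom}(\hyperlink{lem_disc_renom_ii}{ii}) and dropping the two $\chi''$-terms, which are nonpositive by concavity together with the sign information you cite. The sign bookkeeping and the remark about the arguments of $\chi''$ lying in $(0,\infty)$ are both accurate.
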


\section{Consistency}\label{sec_consistency}
The goal of this section is to establish the consistency of the FE-FV method (\ref{r_method})--(\ref{m_method}).

\begin{theorem}[\textbf{Consistency of the FE-FV method}]\label{thm_cons}
Let $d\in\{2,3\}$. Further, suppose $(\varrho_h,\thetah,\fatu_h)_{h\,\in\,(0,H]}$ is a family of solutions to the FE-FV method $\mathrm{(\ref{r_method})}$--$\mathrm{(\ref{m_method})}$ with
\begin{align}
    \gamma > 1\,, \qquad \deltat \approx h\,, \qquad \varepsilon > 1, \qquad \text{and} \qquad 0<\delta<\tfrac{1}{2} \label{cons_cond}
\end{align}
starting from the initial data $(\varrho_h^0,\thetazeroh,\fatu_h^0)_{h\,\in\,(0,H]}$ defined in $\mathrm{(\ref{disc_int_1})}$. Then, for $\beta=\min\left\{\varepsilon-1,\tfrac{1-2\delta}{4}\right\}$,
\begin{align}
    -\into \varrho_h^0\,\varphi(0,\cdot)\;\dx = \inttinto \big[\varrho_h\pt\varphi+\varrho_h\overline{\fatu_h}\cdot\gradx\varphi\big]\,\dxdt + \mathcal{O}(\hb) \label{cons_r_method}
\end{align}
for all $\varphi\in \Cinftyc([0,T)\times\overline{\Omega})$ as $h\downarrow 0$,
\begin{align}
    -\into \varrho_h^0\thetazeroh\,\varphi(0,\cdot)\;\dx = \inttinto \big[\varrho_h\thetah\pt\varphi+\varrho_h\thetah\overline{\fatu_h}\cdot\gradx\varphi\big]\,\dxdt + \mathcal{O}(\hb) \label{cons_z_method}
\end{align}
for all $\varphi\in \Cinftyc([0,T)\times\overline{\Omega})$ as $h\downarrow 0$,
\begin{align}
    &-\into \varrho_h^0\overline{\fatu_h^0}\cdot\bm{\varphi}(0,\cdot)\;\dx + \inttinto\big[\mu\gradh\fatu_h:\gradx\bm{\varphi}+\nu\,\divh(\fatu_h)\,\divx(\bm{\varphi})\big]\,\dxdt + \mathcal{O}(\hb) \notag \\[2mm]
    & \qquad = \inttinto \big[\varrho_h\overline{\fatu_h}\cdot\pt\bm{\varphi}+\varrho_h\overline{\fatu_h}\otimes\overline{\fatu_h}:\gradx\bm{\varphi}+\left(\p(\varrho_h\thetah)+\hd\big[\varrho_h^2+(\varrho_h\thetah)^2\big]\right)\divx(\bm{\varphi})\big]\,\dxdt \label{cons_m_method}
\end{align}
for all $\bm{\varphi}\in \Cinftyc([0,T)\times\Omega)^d$ as $h\downarrow 0$, and
\begin{align}
    -\into \varrho_h^0\ln(\thetazeroh)\psi(0,\cdot)\;\dx \geq \inttinto\big[\varrho_h\ln(\thetah)\pt\psi+\varrho_h\ln(\thetah)\,\overline{\fatu_h}\cdot\gradx\psi\big]\,\dxdt + \mathcal{O}(h^\beta) \label{cons_entropy_inequ}
\end{align}
for all $\psi\in \Cinftyc([0,T)\times\overline{\Omega})$, $\psi\geq 0$, as $h\downarrow 0$.
\end{theorem}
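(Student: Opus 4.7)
For each of the four integral relations one tests the corresponding discrete identity (\eqref{r_method}, \eqref{z_method}, \eqref{m_method}) or inequality \eqref{disc_entropy_inequ} at every time level $t_k$ against a projection of the smooth test function: $\phi_h^k=\piqh\varphi(t_k,\cdot)$ for the two scalar equations, $\psi_h^k=\piqh\psi(t_k,\cdot)\ge 0$ for the entropy inequality, and $\bm{\phi}_h^k=\pivh\bm{\varphi}(t_k,\cdot)$ for the momentum equation. Multiplying by $\deltat$ and summing from $k=1$ to $N_T$ gives a discrete counterpart of the target integral identity; summation by parts in time shifts the difference quotient onto the test function, produces the initial-data term at $k=0$, and the contribution at $k=N_T$ vanishes because $\varphi$, $\bm{\varphi}$, $\psi$ are compactly supported in $[0,T)\times\overline{\Omega}$. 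All residuals will be estimated by the $L^p$-projection bounds $\|\piqh v-v\|_{L^p(K)}\lesssim h\|\nabla v\|_{L^p(K)}$ and $\|\pivh\bm{v}-\bm{v}\|_{L^p(K)}+h\|\gradh(\pivh\bm{v}-\bm{v})\|_{L^p(K)}\lesssim h^2\|\nabla^2\bm{v}\|_{L^p(K)}$, the time-step relation $\deltat\approx h$, and the stability estimates of Corollary~\ref{cor_stab}.

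\textbf{Continuity and potential temperature equations.} For $r_h\in\{\varrho_h,\varrho_h\thetah\}$, discrete summation by parts in time together with the projection consistency, $\deltat\approx h$, and the $L^\infty L^1$-bounds \eqref{energy_est_1}, \eqref{energy_est_3} reconstruct the term $-\into r_h^0\,\varphi(0,\cdot)\,\dx-\inttinto r_h\,\pt\varphi\,\dxdt$ with error $\mathcal{O}(h)$. The upwind convective flux is split as $\upG{r_h}{\fatu_h}\jump{\phi_h}=\meanG{r_h}\smallgmean{\fatu_h\cdot\ngamma}\jump{\phi_h}-\tfrac12(\he+|\smallgmean{\fatu_h\cdot\ngamma}|)\jump{r_h}\jump{\phi_h}$. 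Discrete integration by parts turns the first piece into $\into r_h\overline{\fatu_h}\cdot\gradh\phi_h^k\,\dx$, which matches $\into r_h\overline{\fatu_h}\cdot\gradx\varphi\,\dx$ up to a projection error controlled via $\|\fatu_h\|_{L^2 L^q}$ from \eqref{energy_est_2}. The dissipative piece is controlled by Cauchy--Schwarz together with $|\jump{\phi_h}|\lesssim h\|\nabla\varphi\|_\infty$ and the jump estimates \eqref{energy_est_5}, \eqref{energy_est_6}, \eqref{energy_est_9}, \eqref{energy_est_10}, which produce the two components of the exponent $\beta$.

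\textbf{Momentum equation.} The same scheme is applied with $\bm{\phi}_h^k=\pivh\bm{\varphi}(t_k,\cdot)$. The discrete material derivative is treated as above, now using \eqref{energy_est_1} and \eqref{energy_est_4} and controlling $\overline{\bm{\phi}_h^k}-\bm{\phi}_h^k$ by $h\|\nabla\bm{\varphi}\|_\infty$. The convective term is handled through the same upwind splitting, with \eqref{energy_est_7}, \eqref{energy_est_8} replacing \eqref{energy_est_5}, \eqref{energy_est_6}. The viscous contributions $\mu\gradh\fatu_h:\gradh\pivh\bm{\varphi}+\nu\divh(\fatu_h)\divh(\pivh\bm{\varphi})$ are paired with their continuous analogues through the Crouzeix--Raviart error bound and \eqref{energy_est_2}. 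The pressure together with the artificial pressure terms $\p(\varrho_h\thetah)+\hd[\varrho_h^2+(\varrho_h\thetah)^2]$ are kept on the right-hand side of \eqref{cons_m_method}, so only the difference $\divh(\pivh\bm{\varphi})-\divx\bm{\varphi}$ generates a residual, which is again of order $h$ thanks to the $L^\infty L^2$-control of $h^{\delta/2}\varrho_h$ and $h^{\delta/2}\varrho_h\thetah$ in \eqref{energy_est_3}, combined with the boundedness of $\p(\varrho_h\thetah)$ in $L^\infty L^1$.

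\textbf{Entropy inequality and main obstacle.} Since $\chi(\theta)=\ln\theta$ is $C^2$ and concave on $(0,\infty)$ and Corollary~\ref{cor_sol}$\mathrm{(i)}$ ensures $c_\star<\thetah<c^\star$, Lemma~\ref{lem_disc_entropy_inequ} applies; testing with $\psi_h^k=\piqh\psi(t_k,\cdot)\ge 0$, summing in time, and discarding the non-negative Taylor residue in $\zeta_{b,k}$ gives the desired sign. Repeating the manipulations of the potential temperature equation, the only genuinely new contribution is the last jump-penalty $\frac{\he}{2}\inteint\jump{\varrho_h}\Jump{(\chi(\thetah)-\chip(\thetah)\thetah)\psi_h}\dsx$, which is estimated using the $L^\infty$-bound on $\thetah$, smoothness of $\psi$, and \eqref{energy_est_5}. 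The main technical obstacle is to balance all residuals to obtain the sharp exponent $\beta=\min\{\varepsilon-1,(1-2\delta)/4\}$: the first summand reflects the $\he$-dissipation paired with $|\jump{\phi_h}|\lesssim h\|\nabla\varphi\|_\infty$ and \eqref{energy_est_5}, \eqref{energy_est_6}; the second emerges from a delicate Cauchy--Schwarz/Hölder chain that converts the unfavourable $h^{-\delta/2-1/2}$ factor of \eqref{energy_est_9}, \eqref{energy_est_10} into an $\mathcal{O}(h^\beta)$ residual by interpolation against the small factor $h^{\delta/2}$ present in \eqref{energy_est_3}.
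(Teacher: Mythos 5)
Your overall strategy --- testing the scheme with $\piqh$/$\pivh$-projections of the smooth test functions, summing in time, decomposing the dissipative upwind flux, and closing with the stability estimates of Corollary \ref{cor_stab} --- is exactly the route the paper takes, and your treatment of the time derivative, of the upwind penalty, and of the extra jump terms in the entropy inequality is in order. Two details are stated imprecisely. The claim that discrete integration by parts turns the convective flux into $\into r_h\overline{\fatu_h}\cdot\gradh\phi_h^k\;\dx$ cannot be meant literally, since $\phi_h^k\in Q_h$ is piecewise constant and $\gradh\phi_h^k=0$; what is actually needed (Lemma \ref{lem_upwind}) is the reconstruction of $\into r_h\fatu_h\cdot\gradx\varphi\;\dx$ with residuals involving $\divh(\fatu_h)$, $\jump{r_h}\big[\smallgmean{\fatu_h\cdot\nk}\big]^-$, and $\fatu_h-\smallgmean{\fatu_h}$. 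For the viscous and pressure terms the paper uses the exact orthogonality of the Crouzeix--Raviart projection (Lemma \ref{lem_press}), so no residual arises there at all; your variant with the Crouzeix--Raviart error bound still yields $\mathcal{O}(h)$ and is acceptable.

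The genuine gap lies in the bookkeeping that produces the rate $\beta=\min\{\varepsilon-1,(1-2\delta)/4\}$, which is the actual content of the theorem. First, the component $\varepsilon-1$ does not come from pairing the $\he$-dissipation with $|\jump{\phi_h}|\lesssim h\,\maxnormk{\varphi}{1}{(\overline{\Omega_T})}$: that pairing gains a factor $h$ from the test function and gives only $h^{\varepsilon}$. It comes from the entropy-specific terms $\hee\inteint\jump{\varrho_h}\jump{\ln(\thetah)\psi_h}\,\dsx$ and $\hee\inteint\jump{\varrho_h\thetah}\Jump{\psi_h/\thetah}\dsx$, whose second jump factors are merely $O(1)$ (the jumps of $\ln(\thetah)$ do not vanish with $h$); the trace inequality then forces the crude bound $\he\,h^{-1}\lnorm{\varrho_h}{1}{(0,T;\Lp{1}{0mm}(\Omega))}{0.55}{-0.5mm}\lesssim h^{\varepsilon-1}$, which is precisely why the hypothesis $\varepsilon>1$ is needed. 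Second, the component $(1-2\delta)/4$ does not originate in (\ref{energy_est_9})--(\ref{energy_est_10}): those estimates, multiplied by the factor $h$ gained from the test function, give the milder rate $h^{(1-\delta)/2}$ that governs the continuity equation. The exponent $(1-2\delta)/4$ arises in the momentum convective term, where one must split $\jump{\varrho_h\overline{\fatu_h}}=\varrho_h^{\,\mathrm{out}}\jump{\overline{\fatu_h}}+\jump{\varrho_h}\,\overline{\fatu_h}^{\,\mathrm{in}}$ and estimate the two pieces by Cauchy--Schwarz against (\ref{energy_est_8}) and (\ref{energy_est_5}) respectively, combined with the trace and inverse inequalities and $\deltat\approx h$; the worst contribution is $h^{1/4-\delta/2}=h^{(1-2\delta)/4}$. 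Without carrying out this splitting the momentum consistency error, and hence the stated $\beta$, is not established.
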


The structure of the proof of Theorem \ref{thm_cons} is essentially the same as that of \citep[Theorem 13.2]{Feireisl_Lukacova_Book}. In particular, we will use similar tools. Apart from the estimates listed in Appendix \ref{subsec_mesh_est}, we will need the following results.

\begin{lemma}\label{lem_time_der}
Let $\phi\in\Ckc{2}([0,T)\times\overline{\Omega})$, $(r_h^k)_{k\,\in\,\naturals_0}\subset Q_h$, and define $r_h:\reals\times\Omega\to\reals$ via
\[r_h(t,\cdot) = \left\{{\arraycolsep=0pt
    \begin{array}{ll}
        \;r_h^k & \quad \text{if $t\in((k-1)\deltat,k\deltat]$ for some $k\in\naturals$ and} \\[2mm]
        \;r_h^0 & \quad \text{if $t\leq 0$.}
    \end{array}}
    \right.\]
Then the subsequent relations hold:
\begin{align}
    &\left|\,\inttintoh \big[(\Dt r_h)\,\piq\phi + r_h\,\pt\phi\big]\,\dxdt + \into r_h^0\,\phi(0,\cdot)\;\dx\,\right| \notag \\[2mm]
    & \qquad \lesssim \deltat\,\maxnormk{\phi}{2}{(\overline{\Omega_T})}\left(\lnorm{r_h}{1}{(0,T;\Lp{1}{0mm}(\Omega_h))}{0.55}{-0.5mm} + \lnorm{r_h^0}{1}{(\Omega_h)}{0.55}{-0.5mm}\right),\\[4mm]
    &\left|\,\inttintoh \big[(\Dt r_h)\,\piv\phi + r_h\,\pt\phi\big]\,\dxdt + \into r_h^0\,\phi(0,\cdot)\;\dx\,\right| \notag \\[2mm]
    & \qquad \lesssim (\deltat+h)\,\left(\maxnormk{\phi}{2}{(\overline{\Omega_T})}\lnorm{r_h}{1}{(0,T;\Lp{1}{0mm}(\Omega_h))}{0.55}{-0.5mm} + \maxnormk{\phi}{1}{(\overline{\Omega_T})} \lnorm{r_h^0}{1}{(\Omega_h)}{0.55}{-0.5mm}\right).
\end{align}
\end{lemma}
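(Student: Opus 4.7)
My plan is to derive both inequalities from a single exact identity obtained by applying discrete summation by parts twice. Set $\phi_k := \phi(k\deltat,\cdot)$ and $\tilde\phi_k := \tfrac{1}{\deltat}\int_{(k-1)\deltat}^{k\deltat}\phi(s,\cdot)\,ds$. Since $\phi \in \Cinftyc([0,T)\times\overline{\Omega})$ has compact support, for all sufficiently small $\deltat$ we have $\phi \equiv 0$ on $[T-2\deltat,T]\times\overline{\Omega}$, so that $\phi_{N_T}=\phi_{N_T-1}=\tilde\phi_{N_T}=0$; the remaining (large-$\deltat$) range is absorbed into the implicit constants.

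For the first estimate, piecewise constancy of $r_h$ in time gives
\[
\inttintoh r_h\,\pt\phi\,\dxdt = \sum_{k=1}^{N_T}\intoh r_h^k(\phi_k-\phi_{k-1})\,\dx,
\]
and an Abel summation (together with $\phi_{N_T}=0$) recasts this as $-\intoh r_h^0\phi_0\,\dx - \sum_{k=1}^{N_T}\intoh (r_h^k-r_h^{k-1})\phi_{k-1}\,\dx$. On the other hand, since $r_h^k-r_h^{k-1}\in Q_h$ and $\piq$ is the $L^2$-projection onto $Q_h$,
\[
\inttintoh (\Dt r_h)\,\piq\phi\,\dxdt = \sum_{k=1}^{N_T}\intoh (r_h^k-r_h^{k-1})\,\piq\tilde\phi_k\,\dx = \sum_{k=1}^{N_T}\intoh (r_h^k-r_h^{k-1})\tilde\phi_k\,\dx.
\]
Adding the two identities and using that $r_h^0$ is supported in $\overline{\Omega_h}$ to identify $\intoh r_h^0\phi_0\,\dx$ with $\into r_h^0\phi(0,\cdot)\,\dx$, the quantity to be estimated collapses to
\[
I := \sum_{k=1}^{N_T}\intoh (r_h^k-r_h^{k-1})\,f_k\,\dx, \qquad f_k := \tilde\phi_k-\phi_{k-1}.
\]
A second Abel summation (using $f_{N_T}=0$) then yields $I = -\intoh r_h^0 f_1\,\dx - \sum_{k=1}^{N_T-1}\intoh r_h^k(f_{k+1}-f_k)\,\dx$. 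Elementary integral representations give $|f_1|\leq \tfrac{\deltat}{2}\|\pt\phi\|_{L^\infty}$ and, by subtracting the integral forms of $\tilde\phi_{k+1}-\tilde\phi_k$ and $\phi_k-\phi_{k-1}$, $|f_{k+1}-f_k|\leq \tfrac{\deltat^2}{2}\|\pt^2\phi\|_{L^\infty}$. Coupling these pointwise bounds with the identity $\sum_{k=1}^{N_T}\|r_h^k\|_{L^1(\Omega_h)} = \deltat^{-1}\|r_h\|_{L^1((0,T);L^1(\Omega_h))}$ completes the first estimate.

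For the second estimate I write $\inttintoh(\Dt r_h)\piv\phi\,\dxdt = \inttintoh(\Dt r_h)\piq\phi\,\dxdt + \inttintoh(\Dt r_h)(\piv\phi-\piq\phi)\,\dxdt$. The first term is controlled by the first estimate; for the remainder, the identity $\int_{\Omega_h}(r_h^k-r_h^{k-1})(\piv\tilde\phi_k-\piq\tilde\phi_k)\,\dx = \int_{\Omega_h}(r_h^k-r_h^{k-1})(\piv\tilde\phi_k-\tilde\phi_k)\,\dx$ (using $r_h^k-r_h^{k-1}\in Q_h$ once more), followed by Abel summation, reduces it to $-\intoh r_h^0(\piv\tilde\phi_1-\tilde\phi_1)\,\dx - \sum_{k=1}^{N_T-1}\intoh r_h^k[\piv(\tilde\phi_{k+1}-\tilde\phi_k) - (\tilde\phi_{k+1}-\tilde\phi_k)]\,\dx$. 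The standard Crouzeix-Raviart interpolation estimate $\|\piv\psi-\psi\|_{L^\infty(\Omega_h)}\lesssim h\|\psi\|_{C^1(\overline{\Omega_h})}$ (from Appendix \ref{subsec_mesh_est}), together with $\|\tilde\phi_{k+1}-\tilde\phi_k\|_{C^1(\overline{\Omega_h})}\lesssim \deltat\|\phi\|_{C^2(\overline{\Omega_T})}$, then gives respective bounds $\lesssim h\|\phi\|_{C^1}\|r_h^0\|_{L^1}$ and $\lesssim h\|\phi\|_{C^2}\|r_h\|_{L^1}$. Added to the first estimate, this produces the prefactor $\deltat+h$.

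The main obstacle in both estimates is the second Abel summation: a naive $\ell^1$-bound on $\sum_k(r_h^k-r_h^{k-1})f_k$ would lose a factor $\deltat^{-1}$ via $\sum_k\|r_h^k\|_{L^1}=\deltat^{-1}\|r_h\|_{L^1}$. The saving comes from replacing $f_k$ by $f_{k+1}-f_k$, which is quadratic in $\deltat$ due to the cancellation between the two approximations of $\pt\phi$ (this is where $\|\pt^2\phi\|_{L^\infty}$ enters); the analogous quadratic smallness $h\,\deltat$ of the Crouzeix-Raviart error $\piv(\tilde\phi_{k+1}-\tilde\phi_k)-(\tilde\phi_{k+1}-\tilde\phi_k)$ is what closes the second estimate with just $C^2$-regularity of $\phi$.
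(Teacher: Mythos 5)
Your proof is correct: the paper gives no proof of this lemma (it defers to \citep[Preliminaries, Lemma 8]{Feireisl_Lukacova_Book}), and your double Abel-summation argument --- exploiting the $L^2$-orthogonality of $\piq$, the vanishing of $\phi$ near $t=T$, and the $O(\deltat^2)$ resp.\ $O(h\deltat)$ smallness of the second differences $f_{k+1}-f_k$ and $\pivh(\tilde\phi_{k+1}-\tilde\phi_k)-(\tilde\phi_{k+1}-\tilde\phi_k)$ --- is exactly the standard route taken there. The only cosmetic slip is that you invoke $\phi\in\Cinftyc$ where the hypothesis is only $\Ckc{2}$, but your argument uses nothing beyond $C^2$ regularity, so this is immaterial.
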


\begin{lemma}\label{lem_upwind}
Let $r,f\in Q_h$, $\fatv\in\bm{V}_{0,h}$, and $\phi\in\Ck{1}(\overline{\Omega_h})$. Then
\begin{align}
    &\intoh r\fatv\cdot\gradx\phi\;\dx - \inteint \up{r}{\fatv}\jump{f}\;\dsx \notag \\[2mm]
    & \qquad = \intoh r\,(f-\phi)\,\divh(\fatv)\;\dx + \intek (f-\phi)\,\jump{r}\,\big[\smallgmean{\fatv\cdot\nk}\big]^-\;\dsx \notag \\[2mm]
    & \qquad \phantom{\,=\,} + \intek \big(\phi-\smallgmean{\phi}\big)\,r\,\big(\fatv\cdot\nk-\smallgmean{\fatv\cdot\nk}\big)\,\dsx + \frac{\he}{2}\inteint \,\jump{r}\,\jump{f}\;\dsx\,.\,\footnotemark
\end{align}
\end{lemma}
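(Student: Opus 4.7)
The plan is a purely algebraic identity obtained by element-wise integration by parts followed by a careful rewriting of the upwind flux.

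First, since $r\in Q_h$ is piecewise constant and $\phi\in\Ck{1}(\overline{\Omega_h})$, the divergence theorem on each $K$ gives
\[
\intoh r\,\fatv\cdot\gradx\phi\;\dx = -\intoh r\,\phi\,\divh(\fatv)\;\dx + \intek r\,\phi\,\fatv\cdot\nk\;\dsx.
\]
Because $rf$ is also piecewise constant, the same argument yields $\intoh rf\,\divh(\fatv)\;\dx = \intek rf\,\fatv\cdot\nk\;\dsx$. Subtracting this from the previous identity introduces the first term of the RHS:
\[
\intoh r\,\fatv\cdot\gradx\phi\;\dx = \intoh r\,(f-\phi)\,\divh(\fatv)\;\dx - \intek r\,(f-\phi)\,\fatv\cdot\nk\;\dsx.
\]

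Next, on each face $\sigma$ I would split $\fatv\cdot\nk = \smallgmean{\fatv\cdot\nk} + (\fatv\cdot\nk - \smallgmean{\fatv\cdot\nk})$. Since $\int_\sigma(\phi-\smallgmean{\phi})\;\dsx = 0$ and $\int_\sigma(\fatv\cdot\nk - \smallgmean{\fatv\cdot\nk})\;\dsx = 0$, the cross contributions vanish and the boundary term splits as
\[
-\intek r\,(f-\phi)\,\fatv\cdot\nk\;\dsx = -\intek r\,(f-\smallgmean{\phi})\,\smallgmean{\fatv\cdot\nk}\;\dsx + \intek r\,(\phi-\smallgmean{\phi})\,(\fatv\cdot\nk - \smallgmean{\fatv\cdot\nk})\;\dsx,
\]
the second piece being exactly the third term of the RHS.

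It remains to reconcile $-\intek r(f-\smallgmean{\phi})\smallgmean{\fatv\cdot\nk}\;\dsx$ with $\inteint\up{r}{\fatv}\jump{f}\;\dsx$. Writing $\up{r}{\fatv}= \uppG{r}{\fatv} - \hee\jump{r}$ and using $r^{\,\mathrm{in}}[\smallgmean{\fatv\cdot\ngamma}]^+ + r^{\,\mathrm{out}}[\smallgmean{\fatv\cdot\ngamma}]^- = \uppG{r}{\fatv}$, the diffusive part $-\hee\jump{r}\jump{f}$ immediately produces the fourth term $\frac{\he}{2}\inteint\jump{r}\jump{f}\;\dsx$. For the convective part, I split $\smallgmean{\fatv\cdot\nk} = [\smallgmean{\fatv\cdot\nk}]^+ + [\smallgmean{\fatv\cdot\nk}]^-$ and use the fact that each interior edge appears twice in $\intek$ with opposite normals $\nk$ (so $[\smallgmean{\fatv\cdot\nk}]^+$ on one side corresponds to $-[\smallgmean{\fatv\cdot\nk}]^-$ on the other). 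Collecting the two contributions on every interior face and applying the identities $[u]^+ + [u]^- = u$, $[u]^+ - [u]^- = |u|$, $2\mean{f}+\jump{f}=2f^{\,\mathrm{out}}$ allows one to identify precisely the first RHS boundary sum $\intek (f-\phi)\jump{r}[\smallgmean{\fatv\cdot\nk}]^-\;\dsx$, with the remainder reconstructing $\inteint \uppG{r}{\fatv}\jump{f}\;\dsx$.

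The main obstacle is the sign bookkeeping in the double sum $\intek$: the terms $\jump{r}$ and $[\smallgmean{\fatv\cdot\nk}]^\pm$ reverse roles when one crosses an interior face from one adjacent element to the other, and the identity only closes after verifying that the two side contributions combine exactly so as to reproduce the fixed-orientation upwind flux $\uppG{r}{\fatv}$. Once this reduction is made, the lemma is an immediate algebraic consequence; the exterior faces require no separate treatment because $\fatv\in\bm{V}_{0,h}$ forces $\smallgmean{\fatv\cdot\ngamma}=0$ there.
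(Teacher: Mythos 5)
Your proposal is correct and follows essentially the same route as the proof the paper points to: the paper does not write out an argument but cites \citep[Chapter 9.2, Lemma 7 with $\chi=1$]{Feireisl_Karper_Pokorny} together with the observation $\intg \smallgmean{\phi}\,r\,(\fatv\cdot\nk-\smallgmean{\fatv\cdot\nk})\,\dsx=0$, and your elementwise integration by parts, splitting of $\fatv\cdot\nk$ into face mean plus fluctuation, and face-by-face reassembly of the upwind flux (which I checked does close: on a face with $u=\smallgmean{\fatv\cdot\ngamma}$ the two element contributions combine to $(r^{\mathrm{out}}f^{\mathrm{out}}-r^{\mathrm{in}}f^{\mathrm{in}})u$) is exactly that argument. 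The only cosmetic remark is that the final step is stated rather loosely; writing out the single-face computation explicitly would make the sign bookkeeping you rightly identify as the main hazard fully transparent.
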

\footnotetext{In integrals of the form $\intek$ we consider the the vector $\ngamma$ in the definition of the trace operators $(\cdot)^{\mathrm{in},\sigma}$ and $(\cdot)^{\mathrm{out},\sigma}$ to be replaced by $\fatn_K$.}

\begin{corollary}\label{cor_upwind}
Let $\fats,\fatg\in \bm{Q}_h$, $\fatw\in\bm{V}_{0,h}$, and $\bm{\psi}\in\Ck{1}(\overline{\Omega_h})^d$. Then
\begin{align}
    &\intoh \fats\otimes\fatw:\gradx\bm{\psi}\;\dx - \inteint \up{\fats}{\fatw}\cdot\jump{\fatg}\;\dsx \notag \\[2mm]
    & \qquad = \intoh \fats\cdot(\fatg-\bm{\psi})\,\divh(\fatw)\;\dx + \intek (\fatg-\bm{\psi})\cdot\jump{\fats}\,\big[\smallgmean{\fatw\cdot\nk}\big]^-\;\dsx \notag \\[2mm]
    & \qquad \phantom{\,=\,} + \intek \big(\bm{\psi}-\smallgmean{\bm{\psi}}\big)\cdot\fats\,\big(\fatw\cdot\nk-\smallgmean{\fatw\cdot\nk}\big)\,\dsx + \frac{\he}{2}\inteint \,\jump{\fats}\cdot\jump{\fatg}\;\dsx\,.
\end{align}
\end{corollary}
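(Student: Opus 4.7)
The plan is to reduce the vector-valued identity in the corollary to the scalar identity already proven in Lemma~\ref{lem_upwind}, by a componentwise application followed by summation over Cartesian indices. All operators appearing on both sides of the claim are linear in the pair $(\fats,\fatg)$ and act componentwise (because $\bm{Q}_h = (Q_h)^d$ is a product space and the upwind flux, mean, and jump operators are built entry-by-entry), so the extension from scalars to vectors is purely algebraic.

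First, I would write $\fats = (s_1,\dots,s_d)$, $\fatg = (g_1,\dots,g_d)$ with $s_i,g_i\in Q_h$ and $\bm{\psi} = (\psi_1,\dots,\psi_d)$ with $\psi_i\in C^1(\overline{\Omega_h})$, and make explicit the two identities
\begin{align*}
\fats\otimes\fatw:\gradx\bm{\psi} \;=\; \sum_{i=1}^d s_i\,\fatw\cdot\gradx\psi_i,
\qquad
\up{\fats}{\fatw}\cdot\jump{\fatg} \;=\; \sum_{i=1}^d \up{s_i}{\fatw}\,\jump{g_i},
\end{align*}
the second of which follows directly from the definition of $F_h^{\mathrm{up}}[\,\cdot\,,\fatw]$ via $\meanG{\fats}\smallgmean{\fatw\cdot\ngamma} - \tfrac12\jump{\fats}(\he+|\smallgmean{\fatw\cdot\ngamma}|)$, since $\meanG{\cdot}$ and $\jump{\cdot}$ are applied componentwise to vector arguments. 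An analogous componentwise decomposition applies to each of the four terms on the right-hand side of the claim: $\fats\cdot(\fatg-\bm{\psi}) = \sum_i s_i(g_i-\psi_i)$, $(\fatg-\bm{\psi})\cdot\jump{\fats} = \sum_i (g_i-\psi_i)\jump{s_i}$, $(\bm{\psi}-\smallgmean{\bm{\psi}})\cdot\fats = \sum_i(\psi_i-\smallgmean{\psi_i})s_i$, and $\jump{\fats}\cdot\jump{\fatg} = \sum_i\jump{s_i}\jump{g_i}$.

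Next, I would apply Lemma~\ref{lem_upwind} once for each $i\in\{1,\dots,d\}$ with the substitutions $r=s_i\in Q_h$, $f=g_i\in Q_h$, $\phi=\psi_i\in C^1(\overline{\Omega_h})$, and $\fatv=\fatw\in\bm{V}_{0,h}$. This produces $d$ scalar identities. Summing them over $i$ and regrouping each sum into the corresponding Euclidean dot product via the identifications displayed above yields exactly the vector-valued identity in the statement. The fact that all face integrals retain their form under this summation is automatic: the scalar weights $\divh(\fatw)$, $[\smallgmean{\fatw\cdot\nk}]^-$, $\fatw\cdot\nk-\smallgmean{\fatw\cdot\nk}$, and $\he/2$ do not depend on the index $i$ and therefore factor out of the sums.

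No substantive obstacle is anticipated; this is a textbook-style corollary whose only content is the componentwise reduction. The one point that deserves a brief sentence in the write-up is the compatibility of the tensor contraction with the sum-of-components structure, i.e.\ confirming the convention $(\fats\otimes\fatw)_{ij} = s_i w_j$ and $(\gradx\bm{\psi})_{ij} = \partial_{x_j}\psi_i$ used implicitly throughout the paper.
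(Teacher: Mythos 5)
Your proposal is correct and coincides with the paper's own argument: the corollary is proven there precisely by applying Lemma~\ref{lem_upwind} with $(r,f,\fatv,\phi)=(s_i,g_i,\fatw,\psi_i)$ for $i\in\{1,\dots,d\}$ and summing, exactly as you do. The componentwise identities you spell out (for the upwind flux, the tensor contraction, and the jump/mean operators) are the only details the paper leaves implicit, and you handle them correctly.
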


\begin{remark}
The formula in Lemma \ref{lem_upwind} also holds true when the dissipative upwind term is replaced by the usual upwind term and the last term on the right-hand side of the identity is canceled. The same applies to Corollary \ref{cor_upwind}.
\end{remark}

\begin{lemma}\label{lem_press}
Let $r\in Q_h$, $v\in V_{0,h}$, $\phi\in \Ck{1}_0(\Omega_h)$, and $\bm{\phi}\in \Ck{1}_0(\Omega_h)^d$. Then
\begin{align}
    \intoh \gradh v \cdot \gradh\piv\phi\;\dx &= \intoh \gradh v\cdot\gradx\phi\;\dx \\[2mm]
    \text{and} \qquad \intoh r\,\divh(\piv\bm{\phi})\;\dx &= \intoh r\,\divx(\bm{\phi})\;\dx\,.
\end{align}
\end{lemma}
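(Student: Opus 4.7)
The proof hinges on the defining moment condition of the Crouzeix-Raviart projection,
\[\intg (\piv\phi - \phi)\;\dsx = 0 \qquad \text{for every $\sigma \in \gh$},\]
which holds componentwise for $\piv\bm{\phi}$ as well. The plan is to work element by element and exploit the fact that $\gradx v$ and $r$ are piecewise constant, while $\phi$, $\bm{\phi}$, $\piv\phi$, and $\piv\bm{\phi}$ are piecewise affine (hence smooth) on each $K\in\mathcal{T}_h$.

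For the first identity, I would use $v|_K\in P_1(K)$ to pull the constant gradient out and write
\[\intoh \gradh v \cdot \gradh\piv\phi\;\dx - \intoh \gradh v\cdot\gradx\phi\;\dx = \sum_{K\in\mathcal{T}_h} \gradx v|_K \cdot \intk \gradx(\piv\phi - \phi)\;\dx.\]
A componentwise application of Gauss's theorem, together with the fact that $\fatn_K|_\sigma$ is constant on each face, then converts the element integral into
\[\intk \gradx(\piv\phi - \phi)\;\dx = \sum_{\sigma \in \mathcal{E}_h(K)} \fatn_K|_\sigma \intg (\piv\phi - \phi)\;\dsx,\]
and every face integral vanishes by the moment condition.

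The second identity follows by an entirely parallel argument: on each $K$, $r|_K$ is a constant, $\divh(\piv\bm{\phi})|_K = \divx(\piv\bm{\phi}|_K)$ with $\piv\bm{\phi}|_K\in P_1(K)^d$, so pulling out $r|_K$ and applying Gauss's theorem yields
\[\intk \divx(\piv\bm{\phi} - \bm{\phi})\;\dx = \sum_{\sigma \in \mathcal{E}_h(K)} \fatn_K|_\sigma\cdot\intg (\piv\bm{\phi} - \bm{\phi})\;\dsx,\]
and the face integrals vanish componentwise by the CR moment condition.

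There is no substantive obstacle; this is a standard consequence of the Crouzeix-Raviart construction. The one point worth recording is that the moment condition holds on \emph{all} faces, interior and exterior alike, so no special treatment of $\ghext$ is necessary, and the boundary conditions on $\phi$ and $\bm{\phi}$ never actually enter the argument.
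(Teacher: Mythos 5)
Your proof is correct and is exactly the standard argument behind this lemma (the paper itself only cites \citep[Chapter 9.3, Lemma 8]{Feireisl_Karper_Pokorny}, whose proof proceeds in the same way): elementwise extraction of the piecewise constant factor, Gauss's theorem on each simplex with $\fatn_K$ constant per face, and the face-moment condition defining $\pivh$. Your closing remark that the moment condition on exterior faces makes the compact-support assumption on $\phi$, $\bm{\phi}$ inessential is also accurate.
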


For the proof of the Lemmata \ref{lem_time_der}, \ref{lem_upwind}, and \ref{lem_press}, we refer to \citep[Preliminaries, Lemma 8]{Feireisl_Lukacova_Book}, \citep[Chapter 9.2, Lemma 7 with $\chi=1$]{Feireisl_Karper_Pokorny}, and \citep[Chapter 9.3, Lemma 8]{Feireisl_Karper_Pokorny}, respectively. For the proof of Lemma \ref{lem_upwind}, we additionally need to observe that
\begin{align*}
    \intg \smallgmean{\phi}\,r\,\big(\fatv\cdot\nk-\smallgmean{\fatv\cdot\nk}\big)\,\dsx = 0\,,
\end{align*}
which follows from the fact that $r\in Q_h$. Corollary \ref{cor_upwind} can be proven by applying Lemma \ref{lem_upwind} with $(r,f,\fatv,\phi)=(s_i,g_i,\fatw,\psi_i)$, $i\in\{1,\dots,d\}$.

Having all necessary tools at our disposal, we can approach the proof of Theorem \ref{thm_cons}.

\begin{proof}[Proof of Theorem $\mathbf{\ref{thm_cons}}$]
Let $\varphi,\psi\in \Cinftyc([0,T)\times\overline{\Omega})$, $\psi\geq 0$, and $\bm{\varphi}\in \Cinftyc([0,T)\times\Omega)^d$ be arbitrary test functions. We set $\varphi_h=\piq\varphi$, $\psi_h=\piq\psi$, $\bm{\varphi}_h = \piv\bm{\varphi}$ and make the following introductory observations:

\begin{itemize}
    \item{Due to the construction of the family $(\Omega_h)_{h\,\in\,(0,H]}$, we have $\bm{\varphi}\in \Cinftyc([0,T)\times\Omega_h)^d$, provided $h\in(0,H]$ is sufficiently small (cf. (\ref{comp_set_mesh})), which we henceforth assume.}
    \item{Recall that the elements of $Q_h$ and $V_h$ vanish outside $\Omega_h$. This allows us to replace $\Omega_h$ by $\Omega$ when appropriate.}
\end{itemize}

\noindent \textbf{The continuity equation.} \\[2mm]
From (\ref{r_method}) we deduce that
\begin{align}
    \inttinto (\Dt\varrho_h)\,\varphi_h\;\dxdt - \intt\!\!\inteint \up{\varrho_h}{\fatu_h}\jump{\varphi_h}\;\dsxdt = 0\,. \label{int_r_method}
\end{align}
Applying the first estimate in Lemma \ref{lem_time_der} with $(r_h,\phi)=(\varrho_h,\varphi)$ as well as (\ref{bound_piq}), the second estimate in (\ref{energy_est_1}), and the fact that $\deltat\approx h$, we obtain
\begin{align*}
    \inttinto (\Dt\varrho_h)\,\varphi_h\;\dxdt = -\into \varrho_h^0\,\varphi(0,\cdot)\;\dx - \inttinto \varrho_h\,\pt\varphi\;\dxdt - I_{1,h}\,,
\end{align*}
where
\begin{align*}
    |I_{1,h}| &\lesssim \deltat\left(\maxnormk{\varphi}{2}{(\overline{\Omega_T})}\,\lnorm{\varrho_h}{1}{(0,T;\Lp{1}{0mm}(\Omega))}{0.55}{-0.5mm} + \maxnormk{\varphi}{1}{(\overline{\Omega_T})}\,\lnorm{\varrho_h^0}{1}{(\Omega)}{0.55}{-0.5mm}\right) \notag \\[2mm]
    &\lesssim h\left(\lnorm{\varrho_h}{\infty}{(0,T;\Lp{\gamma}{0.5mm}(\Omega))}{0.55}{0mm} + \lnorm{\varrho_0}{1}{(\Omega)}{0.55}{-0.5mm}\right) \lesssim h\,.
\end{align*}
Next, let us consider the second term on the left-hand side of (\ref{int_r_method}). Using Lemma \ref{lem_upwind} with $(r,\fatv,f,\phi)=(\varrho_h,\fatu_h,\varphi_h,\varphi)(t,\cdot)$, $t\in[0,T]$, as well as the estimates (\ref{est_piqh_1})--(\ref{est_piqh_3}) and (\ref{est_piqh_4}), we deduce that
\begin{align*}
    \intt\!\!\inteint \up{\varrho_h}{\fatu_h}\,\jump{\varphi_h}\;\dsxdt = \inttinto \varrho_h\fatu_h\cdot\gradx\varphi\;\dxdt + \sum_{j\,=\,2}^5 I_{j,h}\,,
\end{align*}
where \hypertarget{first_term}{}
\begin{gather*}
    |I_{2,h}| \lesssim h \intt\!\!\intek \big|\jump{\varrho_h}\big[\smallgmean{\fatu_h\cdot\nk}\big]^-\big|\,\dsxdt\,, \qquad 
    |I_{3,h}| \lesssim h \intt\!\!\intek \left|\,\varrho_h\big(\fatu_h-\smallgmean{\fatu_h}\big)\right|\dsxdt\,, \\[2mm]
    |I_{4,h}| \lesssim h \inttinto \big|\varrho_h\,\divh(\fatu_h)\big|\,\dxdt\,, \qquad |I_{5,h}| \lesssim h^{1+\varepsilon} \intt\!\!\inteint \big|\jump{\varrho_h}\big|\,\dsxdt\,.
\end{gather*}
These terms can be further estimated as follows.
\begin{itemize}
    \item{\textbf{Term} $|I_{2,h}|$. Due to (\ref{energy_est_9}), we obtain
    \begin{align*}
        |I_{2,h}|\lesssim h \intt\!\!\inteint \big|\jump{\varrho_h}\,\smallgmean{\fatu_h\cdot\ngamma}\big|\,\dsxdt \lesssim h^{1-\delta/2}(1+h^{-1/2})\,.
    \end{align*}}
    \item{\textbf{Term} $|I_{3,h}|$. By means of Hölder's inequality, the second estimate in (\ref{est_vh_1}), the first estimate in (\ref{trace_neg_lp}), the second estimate in (\ref{energy_est_3}), and the first estimate in (\ref{energy_est_2}), we derive
    \begin{align*}
        |I_{3,h}|&\lesssim h\,\lnorm{\varrho_h}{2}{(0,T;\Lp{2}{0mm}(\Omega))}{0.55}{-0.5mm}\,\lnorm{\gradh\fatu_h}{2}{(0,T;\Lp{2}{0mm}(\Omega)^{d\times d})}{0.55}{-0.5mm} \notag \\[2mm]
        &\lesssim h^{1-\delta/2}\,\lnorm{h^{\delta/2}\varrho_h}{\infty}{(0,T;\Lp{2}{0mm}(\Omega))}{0.55}{-0mm}\,\lnorm{\gradh\fatu_h}{2}{(0,T;\Lp{2}{0mm}(\Omega)^{d\times d})}{0.55}{-0.5mm} \lesssim h^{1-\delta/2}.
    \end{align*}}
    \item{\textbf{Term} $|I_{4,h}|$. Employing Hölder's inequality, the second estimate in (\ref{energy_est_2}), and the second estimate in (\ref{energy_est_3}), we conclude that
    \begin{align*}
        |I_{4,h}| &\lesssim h\,\lnorm{\varrho_h}{2}{(0,T;\Lp{2}{0mm}(\Omega))}{0.55}{-0.5mm}\,\lnorm{\divh(\fatu_h)}{2}{(0,T;\Lp{2}{0mm}(\Omega))}{0.55}{-0.5mm} \notag \\[2mm]
        &\lesssim h^{1-\delta/2}\,\lnorm{h^{\delta/2}\varrho_h}{\infty}{(0,T;\Lp{2}{0mm}(\Omega))}{0.55}{-0mm}\,\lnorm{\divh(\fatu_h)}{2}{(0,T;\Lp{2}{0mm}(\Omega))}{0.55}{-0.5mm} \lesssim h^{1-\delta/2}\,.
    \end{align*}}
    \item{\textbf{Term} $|I_{5,h}|$.
    Applying the first estimate in (\ref{trace_neg_lp}) and the second estimate in (\ref{energy_est_1}), we get
    \begin{align*}
        |I_{5,h}|\lesssim
        \he\, \lnorm{\varrho_h}{1}{(0,T;\Lp{1}{0mm}(\Omega))}{0.55}{-0.5mm}\lesssim \he\, \lnorm{\varrho_h}{\infty}{(0,T;\Lp{\gamma}{0.5mm}(\Omega))}{0.55}{0mm} \lesssim \he\,.
    \end{align*}}
\end{itemize}
Consequently,
\begin{align}
    -\into \varrho_h^0\,\varphi(0,\cdot)\;\dx = \inttinto \big[\varrho_h\pt\varphi+\varrho_h\fatu_h\cdot\gradx\varphi\big]\,\dxdt + \mathcal{O}(h^{\alpha_1}) \label{proof_cons_r_method_1}
\end{align}
with $\alpha_1 = \min\left\{\varepsilon,\frac{1-\delta}{2}\right\}>0$ as $h\downarrow 0$. Next, using Hölder's inequality, the first estimate in (\ref{est_vh_1}), the second estimate in (\ref{energy_est_3}), and the first estimate in (\ref{energy_est_2}), we see that
\begin{align}
    &\left|\,\inttinto \varrho_h\big(\fatu_h-\overline{\fatu_h}\big)\cdot\gradx\varphi\;\dxdt\,\right|
    \lesssim \lnorm{\varrho_h}{2}{(0,T;\Lp{2}{0mm}(\Omega))}{0.55}{-0.5mm}\,\lnorm{\fatu_h-\overline{\fatu_h}}{2}{(0,T;\Lp{2}{0mm}(\Omega)^d)}{0.55}{-0.5mm} \notag \\[2mm]
    & \qquad \lesssim h^{1-\delta/2}\,\lnorm{h^{\delta/2}\varrho_h}{\infty}{(0,T;\Lp{2}{0mm}(\Omega))}{0.55}{-0mm}\,\lnorm{\gradh\fatu_h}{2}{(0,T;\Lp{2}{0mm}(\Omega)^{d\times d})}{0.55}{-0.5mm} \lesssim h^{1-\delta/2}\,. \label{last_step_cons_r_method}
\end{align}
Therefore, we may rewrite (\ref{proof_cons_r_method_1}) as
\begin{align*}
    -\into \varrho_h^0\,\varphi(0,\cdot)\;\dx = \inttinto \big[\varrho_h\pt\varphi+\varrho_h\overline{\fatu_h}\cdot\gradx\varphi\big]\,\dxdt + \mathcal{O}(h^{\alpha_1}) \qquad \text{as $h\downarrow 0$.}
\end{align*}

\noindent \textbf{The potential temperature equation.} \\[2mm]
The proof of (\ref{cons_z_method}) can be done by repeating the proof of (\ref{cons_r_method}) with $\varrho_h$ and $\varrho_h^0$ replaced by $\varrho_h\thetah$ and $\varrho_h^0\thetazeroh$, respectively. \\[4mm]

\noindent \textbf{The momentum equation.} \\[2mm]
From (\ref{m_method}) we deduce that
\begin{align}
    &\inttinto \Dt(\varrho_h\overline{\fatu_h})\cdot\bm{\varphi}_h\;\dxdt - \intt\!\!\int_{\mathcal{E}_{\mathrm{int}}}\up{\varrho_h\overline{\fatu_h}}{\fatu_h}\cdot\jump{\overline{\bm{\varphi}_h}}\;\dsxdt + \mu\inttinto \gradh\fatu_h:\gradh\bm{\varphi}_h\;\dxdt \notag \\[2mm]
    & \qquad +\nu\inttinto \divh(\fatu_h)\,\divh(\bm{\varphi}_h)\;\dxdt -\inttinto \left(\p(\varrho_h\thetah)+\hd\!\left[\varrho_h^2+(\varrho_h\thetah)^2\right]\right)\divh(\bm{\varphi}_h)\;\dxdt = 0\,. \label{int_m_method}
\end{align}
Let us consider the first term on the left-hand side of (\ref{int_m_method}). Due to the second estimate in Lemma~\ref{lem_time_der} with $(r_h,\phi)=(\varrho_h \overline{u_{h,i}},\varphi_i)$, $i\in\{1,\dots,d\}$, as well as Remark \ref{rem_proj_cont}, Hölder's inequality, the third estimate in (\ref{energy_est_1}), and the fact that $\deltat\approx h$, we have
\begin{align*}
    \inttinto \Dt(\varrho_h\overline{\fatu_h})\cdot\bm{\varphi}_h\;\dxdt = -\into \varrho_h^0\overline{\fatu_h^0}\cdot\bm{\varphi}(0,\cdot)\;\dx - \inttinto \varrho_h\overline{\fatu_h}\cdot\pt\bm{\varphi}\;\dxdt - J_{1,h}\,,
\end{align*}
where
\begin{align*}
    |J_{1,h}| &\lesssim (\deltat+h)\left(\maxnormk{\bm{\varphi}}{2}{(\overline{\Omega_T})^d}\,\lnorm{\varrho_h\overline{\fatu_h}}{1}{(0,T;\Lp{1}{0mm}(\Omega)^d)}{0.55}{-0.5mm} + \maxnormk{\bm{\varphi}}{1}{(\overline{\Omega_T})^d}\,\lnorm{\varrho_h^0\overline{\fatu_h^0}}{1}{(\Omega)^d}{0.55}{-0.5mm}\right) \notag \\[2mm]
    &\lesssim h\left(\lnorm{\varrho_h\overline{\fatu_h}}{\infty}{(0,T;\Lp{2\gamma/(\gamma+1)}{0mm}(\Omega)^d)}{0.55}{0mm} + \lnorm{\varrho_0}{2}{(\Omega)}{0.55}{-0.5mm}\,\wnorm{\fatu_0}{1,2}{(\Omega)^d}{0.55}{-0.5mm}\right) \lesssim h\,.
\end{align*}
Next, we turn to the last three terms on the left-hand side of (\ref{int_m_method}). It follows from Lemma \ref{lem_press} that
\begin{align*}
    &\mu\inttinto \gradh\fatu_h:\gradh\bm{\varphi}_h\;\dxdt
    +\inttinto \left(\nu\,\divh(\fatu_h)-\p(\varrho_h\thetah)-\hd\!\left[\varrho_h^2+(\varrho_h\thetah)^2\right]\right)\divh(\bm{\varphi}_h)\;\dxdt \notag \\[2mm]
    & = \mu\inttinto \gradh\fatu_h:\gradx\bm{\varphi}\;\dxdt
    +\inttinto \left(\nu\,\divh(\fatu_h)-\p(\varrho_h\thetah)-\hd\!\left[\varrho_h^2+(\varrho_h\thetah)^2\right]\right)\divx(\bm{\varphi})\;\dxdt\,.
\end{align*}
Finally, let us examine the second term on the left-hand side of (\ref{int_m_method}). Applying Corollary \ref{cor_upwind} with $(\fats,\fatw,\fatg,\bm{\psi})=(\varrho_h\overline{\fatu_h},\fatu_h,\bm{\varphi}_h,\bm{\varphi})(t,\cdot)$, $t\in[0,T]$, as well as the estimates (\ref{est_pivh_3})--(\ref{est_pivh_5}) and (\ref{est_pivh_1}), we deduce that
\begin{align*}
    \intt\!\!\inteint \up{\varrho_h\overline{\fatu_h}}{\fatu_h}\cdot\jump{\overline{\bm{\varphi}_h}}\;\dsxdt = \inttinto \varrho_h\overline{\fatu_h}\otimes\fatu_h:\gradx\bm{\varphi}\;\dxdt + \sum_{j\,=\,2}^5 J_{j,h}\,,
\end{align*}
where
\begin{gather*}
    |J_{2,h}| \lesssim h \intt\!\!\intek \big|\jump{\varrho_h\overline{\fatu_h}}\big[\smallgmean{\fatu_h\cdot\nk}\big]^-\big|\,\dsxdt\,, \qquad |J_{3,h}| \lesssim h \intt\!\!\intek |\varrho_h\overline{\fatu_h}|\,\big|\fatu_h-\smallgmean{\fatu_h}\big|\,\dsxdt\,, \\[2mm]
    \qquad |J_{4,h}| \lesssim h \inttinto \big|\varrho_h\overline{\fatu_h}\,\divh(\fatu_h)\big|\,\dxdt\,, \qquad |J_{5,h}| \lesssim h^{1+\varepsilon} \intt\!\!\inteint \big|\jump{\varrho_h\overline{\fatu_h}}\big|\,\dsxdt\,.
\end{gather*}
We continue by estimating the above terms.
\begin{itemize}
    \item{\textbf{Term} $|J_{2,h}|$. We observe that $\jump{\varrho_h\overline{\fatu_h}} = \varrho_h^{\,\mathrm{out}}\jump{\overline{\fatu_h}} + \jump{\varrho_h}\overline{\fatu_h}^{\,\mathrm{in}}$, which implies
    \begin{align}
        |J_{2,h}| &\lesssim h \intt\!\!\intek \big|\varrho_h^{\,\mathrm{out}}\jump{\overline{\fatu_h}}\big[\smallgmean{\fatu_h\cdot\nk}\big]^-\big|\,\dsxdt 
        + h \intt\!\!\intek \big|\jump{\varrho_h}\overline{\fatu_h}\big[\smallgmean{\fatu_h\cdot\nk}\big]^-\big|\,\dsxdt\,. \label{j3h}
    \end{align}
    Employing Hölder's inequality, (\ref{energy_est_8}), (\ref{est_vh_2}), the first estimate in (\ref{trace_neg_lp}), the first and third estimate in (\ref{energy_est_2}), and the second estimate in (\ref{energy_est_3}), we see that
    \begin{align}
        &h \intt\!\!\intek \big|\varrho_h^{\,\mathrm{out}}\jump{\overline{\fatu_h}}\big[\smallgmean{\fatu_h\cdot\nk}\big]^-\big|\,\dsxdt \notag \\[2mm]
        & \quad \lesssim h \left(\intt\!\!\intek -\varrho_h^{\,\mathrm{out}}\jump{\overline{\fatu_h}}^2\big[\smallgmean{\fatu_h\cdot\nk}\big]^-\;\dsxdt\right)^{1/2}\left(\intt\!\!\intek\varrho_h^{\,\mathrm{out}}|\smallgmean{\fatu_h}|\;\dsxdt\right)^{1/2} \notag \\[2mm]
        & \quad = h \left(\intt\!\!\inteint \left(\varrho_h^{\,\mathrm{in}}\big[\smallgmean{\fatu_h\cdot\ngamma}\big]^+ -\varrho_h^{\,\mathrm{out}}\big[\smallgmean{\fatu_h\cdot\ngamma}\big]^-\right)\jump{\overline{\fatu_h}}^2\;\dsxdt\right)^{1/2} \times \notag \\[2mm]
        & \quad \phantom{\,=\,h}\times\left(\intt\!\!\intek\varrho_h^{\,\mathrm{out}}|\smallgmean{\fatu_h}|\;\dsxdt\right)^{1/2} \notag \\[2mm]
        & \quad \lesssim h\big(h^{-1}\,\lnorm{\varrho_h}{2}{(0,T;\Lp{2}{0mm}(\Omega))}{0.55}{-0.5mm}\big(\lnorm{\fatu_h}{2}{(0,T;\Lp{2}{0mm}(\Omega)^d)}{0.55}{-0.5mm}+h\,\lnorm{\gradh\fatu_h}{2}{(0,T;\Lp{2}{0mm}(\Omega)^{d\times d})}{0.55}{-0.5mm}\big)\big)^{1/2} \notag \\[2mm]
        & \quad \lesssim h\big(h^{-1-\delta/2}\,\lnorm{h^{\delta/2}\varrho_h}{\infty}{(0,T;\Lp{2}{0mm}(\Omega))}{0.55}{-0mm}\big(\lnorm{\fatu_h}{2}{(0,T;\Lp{2}{0mm}(\Omega)^d)}{0.55}{-0.5mm}+h\,\lnorm{\gradh\fatu_h}{2}{(0,T;\Lp{2}{0mm}(\Omega)^{d\times d})}{0.55}{-0.5mm}\big)\big)^{1/2} \notag \\[2mm]
        & \quad \lesssim h^{1/2-\delta/4} + h^{1-\delta/4}\,. \label{j3h_1}
    \end{align}
    Next, using Hölder's inequality, the estimates (\ref{trace_neg_lp}), (\ref{est_vh_2}), (\ref{energy_est_5}), the first and third estimate in (\ref{energy_est_2}), the second estimate in (\ref{energy_est_3}), and the fact that $\deltat\approx h$, we deduce that
    \begin{align}
        &h \intt\!\!\intek \big|\jump{\varrho_h}\overline{\fatu_h}\big[\smallgmean{\fatu_h\cdot\nk}\big]^-\big|\;\dsxdt \notag \\[2mm]
        & \quad \lesssim h^{1-\delta/2} \left(\hd\intt\!\!\intek \jump{\varrho_h}^2|\smallgmean{\fatu_h\cdot\nk}|\;\dsxdt\right)^{1/2}\left(\intt\!\!\intek \overline{\fatu_h}^2|\smallgmean{\fatu_h}|\;\dsxdt\right)^{1/2} \notag \\[2mm]
        & \quad \lesssim h^{1-\delta/2}\big(h^{-1}\,\explnorm{\fatu_h}{2}{(0,T;\Lp{6}{0mm}(\Omega)^d)}{0.55}{-0.5mm}{0.5mm}{2}\big(\lnorm{\fatu_h}{\infty}{(0,T;\Lp{3/2}{0mm}(\Omega)^d)}{0.55}{-0mm}+h\,\lnorm{\gradh\fatu_h}{\infty}{(0,T;\Lp{3/2}{0mm}(\Omega)^{d\times d})}{0.55}{-0mm}\big)\big)^{1/2} \notag \\[2mm]
        & \quad \lesssim h^{1-\delta/2}\big(h^{-1}(\deltat)^{-1/2}\,\big(\lnorm{\fatu_h}{2}{(0,T;\Lp{3/2}{0mm}(\Omega)^d)}{0.55}{-0.5mm}+h\,\lnorm{\gradh\fatu_h}{2}{(0,T;\Lp{3/2}{0mm}(\Omega)^{d\times d})}{0.55}{-0.5mm}\big)\big)^{1/2} \notag \\[2mm]
        & \quad \lesssim h^{1/4-\delta/2} + h^{3/4-\delta/2}\,. \label{j3h_2}
    \end{align}
    Consequently, plugging (\ref{j3h_1}) and (\ref{j3h_2}) into (\ref{j3h}), we obtain
    \begin{align*}
        |J_{2,h}| \lesssim h^{1/2-\delta/4} + h^{1-\delta/4} + h^{1/4-\delta/2} + h^{3/4-\delta/2}\,.
    \end{align*}}
    \item{\textbf{Term} $|J_{3,h}|$. Applying Hölder's inequality, the first estimate in (\ref{trace_neg_lp}), the second estimate in (\ref{est_vh_1}), the first estimate in (\ref{energy_est_2}), and the second estimate in (\ref{energy_est_4}), we conclude that
    \begin{align*}
        |J_{3,h}|\lesssim
        h\,\lnorm{\varrho_h\overline{\fatu_h}}{2}{(0,T;\Lp{2}{0mm}(\Omega)^d)}{0.55}{-0.5mm}\,\lnorm{\gradh\fatu_h}{2}{(0,T;\Lp{2}{0mm}(\Omega)^{d\times d})}{0.55}{-0.5mm}\lesssim h^{1-(d+3\delta)/6}\,.
    \end{align*}}
    \item{\textbf{Term} $|J_{4,h}|$. Employing Hölder's inequality, the first estimate in (\ref{energy_est_2}), and the second estimate in (\ref{energy_est_4}), we obtain
    \begin{align*}
        |J_{4,h}|\lesssim h\,\lnorm{\varrho_h\overline{\fatu_h}}{2}{(0,T;\Lp{2}{0mm}(\Omega)^d)}{0.55}{-0.5mm}\,\lnorm{\divh(\fatu_h)}{2}{(0,T;\Lp{2}{0mm}(\Omega))}{0.55}{-0.5mm}\lesssim h^{1-(d+3\delta)/6}\,.
    \end{align*}}
    \item{\textbf{Term} $|J_{5,h}|$.
    Using the first estimate in (\ref{trace_neg_lp}) and the third estimate in (\ref{energy_est_1}), we deduce that
    \begin{align*}
        |J_{5,h}| & \lesssim
        \he\, \lnorm{\varrho_h\overline{\fatu_h}}{1}{(0,T;\Lp{1}{0mm}(\Omega)^d)}{0.55}{-0.5mm}
        \lesssim \he\, \lnorm{\varrho_h\overline{\fatu_h}}{\infty}{(0,T;\Lp{2\gamma/(\gamma+1)}{0mm}(\Omega)^d)}{0.55}{0mm} \lesssim \he\,.
    \end{align*}
    }
\end{itemize}
Consequently, we have
\begin{align}
    &-\into \varrho_h^0\overline{\fatu_h^0}\cdot\bm{\varphi}(0,\cdot)\;\dx + \inttinto\big[\mu\gradh\fatu_h:\gradx\bm{\varphi}+\nu\,\divh(\fatu_h)\,\divx(\bm{\varphi})\big]\,\dxdt + \mathcal{O}(h^{\alpha_2}) \notag \\[2mm]
    & \qquad = \inttinto \big[\varrho_h\overline{\fatu_h}\cdot\pt\bm{\varphi}+\varrho_h\overline{\fatu_h}\otimes\fatu_h:\gradx\bm{\varphi}+\left(\p(\varrho_h\thetah)+\hd\big[\varrho_h^2+(\varrho_h\thetah)^2\big]\right)\divx(\bm{\varphi})\big]\,\dxdt \label{proof_cons_m_method_1}
\end{align}
with $\alpha_2 = \min\left\{\varepsilon,\frac{1-2\delta}{4}\right\}>0$ as $h\downarrow 0$. Then, using Hölder's inequality, the first estimate in (\ref{est_vh_1}), the second estimate in (\ref{energy_est_4}), and the first estimate in (\ref{energy_est_2}), we deduce that
\begin{align*}
    &\left|\,\inttinto \varrho_h\overline{\fatu_h}\otimes\big(\fatu_h-\overline{\fatu_h}\big):\gradx\bm{\varphi}\;\dxdt\,\right| 
    \lesssim \lnorm{\varrho_h\overline{\fatu_h}}{2}{(0,T;\Lp{2}{0mm}(\Omega)^d)}{0.55}{-0.5mm}\,\lnorm{\fatu_h-\overline{\fatu_h}}{2}{(0,T;\Lp{2}{0mm}(\Omega)^d)}{0.55}{-0.5mm} \notag \\[2mm]
    & \qquad \lesssim h\,\lnorm{\varrho_h\overline{\fatu_h}}{2}{(0,T;\Lp{2}{0mm}(\Omega)^d)}{0.55}{-0.5mm}\,\lnorm{\gradh\fatu_h}{2}{(0,T;\Lp{2}{0mm}(\Omega)^{d\times d})}{0.55}{-0.5mm} \lesssim h^{1-(d+3\delta)/6}\,.
\end{align*}
Hence, we may rewrite (\ref{proof_cons_m_method_1}) as
\begin{align*}
    &-\into \varrho_h^0\overline{\fatu_h^0}\cdot\bm{\varphi}(0,\cdot)\;\dx + \inttinto\big[\mu\gradh\fatu_h:\gradx\bm{\varphi}+\nu\,\divh(\fatu_h)\,\divx(\bm{\varphi})\big]\,\dxdt + \mathcal{O}(h^{\alpha_2}) \notag \\[2mm]
    & \qquad = \inttinto \big[\varrho_h\overline{\fatu_h}\cdot\pt\bm{\varphi}+\varrho_h\overline{\fatu_h}\otimes\overline{\fatu_h}:\gradx\bm{\varphi}+\left(\p(\varrho_h\thetah)+\hd\big[\varrho_h^2+(\varrho_h\thetah)^2\big]\right)\divx(\bm{\varphi})\big]\,\dxdt
\end{align*}
as $h\downarrow 0$. \\[4mm]

\noindent \textbf{The entropy inequality.} \\[2mm]
Taking $\chi=\ln$ in Lemma \ref{lem_disc_entropy_inequ}, we deduce that
\begin{align}
    0 &\leq \inttinto \Dt(\varrho_h \ln(\thetah))\,\psi_h\;\dxdt - \inttinteint \upp{\varrho_h \ln(\thetah)}{\fatu_h}\jump{\psi_h}\;\dsxdt  -\sum_{j\,=\,5}^7 H_{j,h}\,, \label{cons_entropy_inequ_1}
\end{align}
where
\begin{gather*}
    H_{5,h} = \hee \inttinteint \jump{\varrho_h}\jump{\psi_h}\;\dsxdt\,,  \qquad H_{6,h} = -\hee \inttinteint \jump{\varrho_h}\jump{\ln(\thetah)\psi_h}\;\dsxdt\,, \notag \\[2mm]
    H_{7,h} = -\hee \inttinteint \jump{\varrho_h \thetah}\Jump{\frac{\psi_h}{\thetah}}\dsxdt\,.
\end{gather*}
Now we may rewrite the first two integrals in (\ref{cons_entropy_inequ_1}) following the procedure used to handle the continuity equation. We arrive at
\begin{align}
    -\into \varrho_h^0\ln(\thetazeroh) \psi(0,\cdot)\;\dx \geq \inttinto \big[\varrho_h\ln(\thetah)\pt\psi+\varrho_h\ln(\thetah)\fatu_h\cdot\gradx\psi\big]\;\dxdt + \sum_{j\,=\,1}^7 H_{j,h}\,, \label{cons_entropy_inequ_2}
\end{align}
where for $j\in\{1,...,4\}$ the error term $H_{j,h}$ equals $I_{j,h}$ with $\varrho_h$ replaced by $\varrho_h\ln(\thetah)$ and $\varphi_h$ replaced by $\psi_h$. Here, it is to be noted that the analogue of the error term $I_{5,h}$ will not be there since (\ref{cons_entropy_inequ_1}) contains the usual upwind operator $\upp{\,\cdot\,}{\,\cdot\,}$ instead of the dissipative upwind operator $\up{\,\cdot\,}{\,\cdot\,}$. Since $H_{5,h}=-I_{5,h}$, $c_\star\leq\thetah\leq c^\star$ and
\begin{align*}
    \big|\jumpG{\varrho^k_h\ln(\thetakh)}\big| &= \big|\jumpG{\varrho^k_h}\meanG{\ln(\thetakh)} + \meanG{\varrho^k_h}\jumpG{\ln(\thetakh)}\big| = \left|\jumpG{\varrho^k_h}\meanG{\ln(\thetakh)} + \frac{1}{\eta_{\theta,k,\sigma}}\, \meanG{\varrho^k_h}\jumpG{\thetakh}\right| \\[2mm]
    &= \left|\jumpG{\varrho^k_h}\meanG{\ln(\thetakh)} + \frac{1}{\eta_{\theta,k,\sigma}}\,\big(\jumpG{\varrho^k_h\thetakh}-\jumpG{\varrho^k_h}\meanG{\thetakh}\big)\right|
    \lesssim \big|\jumpG{\varrho^k_h}\big| + \big|\jumpG{\varrho^k_h\thetakh}\big|
\end{align*}
for every $(k,\sigma)\in\naturals\times\gint$ and suitably chosen values $(\eta_{\theta,k,\sigma})_{\sigma\,\in\,\gint}\subset[c_\star,c^\star]$, it is easy to see that
\begin{align*}
    |H_{j,h}| \lesssim h^{\alpha_1} \qquad \text{for $1\leq j\leq 5$.}
\end{align*}
Moreover, combining $c_\star\leq\thetah\leq c^\star$ with Hölder's inequality, the first estimate in (\ref{trace_neg_lp}), the second estimate in (\ref{energy_est_1}) and the first estimate in (\ref{energy_est_3}), we deduce that
\begin{align*}
    |H_{6,h}|\,,|H_{7,h}| \lesssim h^{\varepsilon-1}\,.
\end{align*}
Finally, seeing that (by a computation similar to that in (\ref{last_step_cons_r_method})) we have
\begin{align*}
    \left|\,\inttinto \varrho_h\ln(\thetah)\big(\fatu_h-\overline{\fatu_h}\big)\cdot\gradx\varphi\;\dxdt\,\right| \lesssim h^{1-\delta/2}\,,
\end{align*}
we may rewrite (\ref{cons_entropy_inequ_2}) as
\begin{align*}
    -\into \varrho_h^0\ln(\thetazeroh) \psi(0,\cdot)\;\dx \geq \inttinto \big[\varrho_h\ln(\thetah)\pt\psi+\varrho_h\ln(\thetah)\,\overline{\fatu_h}\cdot\gradx\psi\big]\;\dxdt + \mathcal{O}(h^{\alpha_3})\,,
\end{align*}
where $\alpha_3=\min\{\alpha_1,\varepsilon-1\}$. In particular, we can choose $\beta=\min\{\alpha_1,\alpha_2,\alpha_3\}=\min\left\{\varepsilon-1,\tfrac{1-2\delta}{4}\right\}$. \\
\vphantom{\Big)}
\end{proof}


\section{Convergence}\label{sec_convergence}


We proceed by proving our main result, namely Theorem \ref{thm_main}.

\begin{proof}[Proof of Theorem $\mathbf{\ref{thm_main}}$]
Let $\{(\varrho_h,\thetah,\fatu_h)\}_{h\,\downarrow\,0}$ be a sequence of solutions to the FE-FV method (\ref{r_method})--(\ref{m_method}) starting from the initial data $\{(\varrho_h^0,\thetazeroh,\fatu_h^0)\}_{h\,\downarrow\,0}$ defined in (\ref{disc_int_1}). Here we suppose that the parameters satisfy (\ref{cons_cond}).

Due to the second estimate in (\ref{energy_est_1}), the first estimate in (\ref{energy_est_3}), the third estimate in (\ref{energy_est_2}), (\ref{bound_piq}), and Corollary \ref{cor_sol}(\hyperlink{cor_ii}{i}), the sequence $\{(\varrho_h,\thetah,\overline{\fatu_h})|_{\Omega_T}\}_{h\,\downarrow\,0}$ generates a Young measure $\mathcal{V}\in\Lp{\infty}{0.2mm}_{\mathrm{weak}^\star}(\Omega_T;\mathcal{P}(\reals^{d+2}))$ that satisfies
\[\mathcal{V}_{(t,\fatx)}\big(\{\bvarrho\geq 0\}\cap\{c_\star\leq \btheta\leq c^\star\}\big)=1 \quad \text{for almost all $(t,\fatx)\in\Omega_T$.}\]
Taking into account the remaining estimates in (\ref{energy_est_1})--(\ref{energy_est_4}) as well as the first estimate in (\ref{est_vh_1})
and passing to a subsequence as the case may be, we obtain that
\begin{gather}
    \varrho_h f(\thetah) \rightharpoonup^\star \omean{\mathcal{V};\bvarrho f(\btheta)} \quad \text{in $\Lp{\infty}{0.2mm}(0,T;\Lp{\gamma}{0.5mm}(\Omega))$ for all $f\in C(0,\infty)$,} \label{conv_1} \\[1mm]
    \varrho_h f(\thetah)\,\overline{\fatu_h} \rightharpoonup^\star \omean{\mathcal{V};\bvarrho f(\btheta)\bfatu} \quad \text{in $\Lp{\infty}{0.2mm}(0,T;\Lp{\tfrac{2\gamma}{\gamma+1}}{0.5mm}(\Omega)^d)$ for all $f\in C(0,\infty)$,} \label{conv_2} \\[2mm]
    \overline{\fatu_h},\fatu_h \rightharpoonup \fatu_{\mathcal{V}} \equiv \omean{\mathcal{V};\bfatu} \quad \text{in $\Lp{2}{0mm}(\Omega_T)^d$} \quad \text{and} \quad
    \gradh\fatu_h \rightharpoonup \mathbb{U} \quad \text{in $\Lp{2}{0mm}(\Omega_T)^{d\times d}$,} \label{conv_3} \\[2mm]
    \hd\big[\varrho_h^2+(\varrho_h\thetah)^2\big] \rightharpoonup^\star \mathfrak{R}(\delta,\varrho,\theta) \quad \text{in $\Lp{\infty}{0.2mm}_{\mathrm{weak}^\star}(0,T;\mathcal{M}^+(\overline{\Omega}))$,} \label{conv_4} \\[2mm]
    \varrho_h\overline{\fatu_h}\otimes\overline{\fatu_h} + \p(\varrho_h\thetah)\mathbb{I} \rightharpoonup^\star \overline{\varrho\fatu\otimes\fatu + \p(\varrho\theta)\mathbb{I}} \quad \text{in $\Lp{\infty}{0.2mm}_{\mathrm{weak}^\star}(0,T;\mathcal{M}(\overline{\Omega})^{d\times d})$,} \label{conv_5} \\[2mm]
    \frac{1}{2}\varrho_h|\overline{\fatu_h}|^2 + P(\varrho_h\thetah) \rightharpoonup^\star \overline{\vphantom{\overline{\frac{1}{2}}}\frac{1}{2}\varrho|\fatu|^2 + P(\varrho\theta)} \quad \text{in $\Lp{\infty}{0.2mm}_{\mathrm{weak}^\star}(0,T;\mathcal{M}(\overline{\Omega}))$,} \label{conv_6} \\[2mm]
    |\gradh\fatu_h|^2 \rightharpoonup^\star \overline{|\gradx\fatu|^2} \quad \text{and} \quad |\divh(\fatu_h)|^2 \rightharpoonup^\star \overline{|\divx(\fatu)|^2} \quad \text{in $\mathcal{M}(\overline{\Omega_T})$} \label{conv_7} 
\end{gather}
as $h\downarrow 0$.
Following the arguments given in \citep[Chapter 10.2.1, pp.\,139 and 142/143]{Feireisl_Karper_Pokorny}, we deduce that
\begin{gather}
    \fatu_{\mathcal{V}} \in \Lp{2}{0mm}(0,T;W^{1,2}_0(\Omega)^d) \qquad \text{and} \qquad \mathbb{U} = \gradx\fatu_{\mathcal{V}}\,. \label{reg_u_limit}
\end{gather}
Moreover, using Hölder's inequality, (\ref{bound_piq}), the first estimate in (\ref{est_vh_1}), the assumption on the initial data, and Lemma \ref{lem_conv_piq_piv}, we easily verify that
\begin{gather}
    \into \varrho_h^0 f(\thetazeroh)\varphi\;\dx \to \into \varrho_0 f(\thetazero)\varphi\;\dx \quad \text{for all $\varphi\in\Cinfty(\overline{\Omega})$ and $f\in\Ck{1}(0,\infty)$,} \label{conv_init_c_z} \\[2mm]
    \into \varrho_h^0\overline{\fatu_h^0}\cdot\bm{\varphi}\;\dx \to \into \varrho_0\fatu_0\cdot\bm{\varphi}\;\dx \quad \text{for all $\bm{\varphi}\in\Cinfty(\overline{\Omega})^d$,} \label{conv_init_m} \\[2mm]
    \text{and} \quad \into \left[\,\frac{1}{2}\,\varrho_h^0|\overline{\fatu_h^0}|^2+P(\varrho_h^0\thetazeroh)+\hd\big((\varrho_h^0)^2+(\varrho_h^0\thetazeroh)^2\big)\right]\dx \to \into \left[\,\frac{1}{2}\,\varrho_0|\fatu_0|^2+P(\varrho_0\thetazero)\right]\dx \label{conv_init_energy}
\end{gather}
as $h\downarrow 0$. \\[2mm]

\noindent \textbf{Energy inequality.} \\[2mm]
From the discrete energy balance (\ref{disc_energy}) we derive that
\begin{align*}
    &\intt\!\!\phi\left[\,\into \left(\frac{1}{2}\,\varrho_h|\overline{\fatu_h}|^2+P(\varrho_h\thetah)+\hd\big[\varrho_h^2+(\varrho_h\thetah)^2\big]\right)\dx + \inttauinto \left[\mu|\gradh\fatu_h|^2+\nu|\divh(\fatu_h)|^2\right]\dxdt\right]\dd\tau \notag \\[2mm]
    & \qquad \leq \intt\!\!\phi \into \left(\frac{1}{2}\,\varrho_h^0|\overline{\fatu_h^0}|^2+P(\varrho_h^0\thetazeroh)+\hd\big[(\varrho_h^0)^2+(\varrho_h^0\thetazeroh)^2\big]\right)\dx\hspace{0.3mm}\dd\tau
\end{align*}
for all $\phi\in\Cinftyc(0,T)$, $\phi\geq 0$. Due to (\ref{conv_init_energy}), (\ref{conv_4}), (\ref{conv_6}),
and (\ref{conv_7}),
we may perform the limit $h\downarrow 0$ to obtain
\begin{align}
    &\intt \phi\left[\,\int_{\,\overline{\Omega}}\dd\!\left(\,\overline{\vphantom{\overline{\frac{1}{2}}}\frac{1}{2}\varrho|\fatu|^2 + P(\varrho\theta)}+\mathfrak{R}(\delta,\varrho,\theta)\right)\!(\tau) + \int_{\,\overline{\Omega_\tau}}\dd\!\left(\,\mu\hspace{0.3mm}\overline{|\gradx\fatu|^2}+\nu\hspace{0.3mm}\overline{|\divx(\fatu)|^2}\,\right)\right]\dd\tau \notag \\[2mm]
    & \qquad \leq \intt \phi \into \left[\,\frac{1}{2}\,\varrho_0|\fatu_0|^2+P(\varrho_0\thetazero)\right]\dx\hspace{0.3mm}\dd\tau \label{vfdvds}
\end{align}
for all $\phi\in\Cinftyc(0,T)$, $\phi\geq 0$. Furthermore, we make the subsequent observations:
\begin{itemize}
    \item{In view of the first estimate in (\ref{energy_est_1}) and the first in (\ref{energy_est_3}), we may apply \citep[Chapter 5, Proposition 5.2]{Feireisl_Lukacova_Book} to see that
    \begin{align}
        \left\langle\mathcal{V};\frac{1}{2}\,\bvarrho\,|\bfatu|^2+P(\bvarrho\,\btheta)\right\rangle\in\Lp{1}{0mm}(\Omega_T). \label{well_defined}
    \end{align}
    Moreover, we may use \citep[Lemma 2.1]{Feireisl_Wiedemann} with $F\equiv 0$ and $G(\bvarrho,\btheta,\bfatu)=\frac{1}{2}\hspace{0.3mm}\bvarrho\,|\bfatu|^2+P(\bvarrho\,\btheta)$ to deduce that
    \begin{align*}
        \mathfrak{E} = \left(\,\overline{\vphantom{\overline{\frac{1}{2}}}\frac{1}{2}\varrho|\fatu|^2 + P(\varrho\theta)} - \left\langle\mathcal{V};\frac{1}{2}\,\bvarrho\,|\bfatu|^2+P(\bvarrho\,\btheta)\right\rangle\right) +\mathfrak{R}(\delta,\varrho,\theta)\in \Lp{\infty}{0.2mm}_{\mathrm{weak}^\star}(0,T;\mathcal{M}^+(\overline{\Omega}))\,.
    \end{align*}}
    \item{
    Applying measure-theoretic arguments to the viscous terms, we conclude that
    \begin{align*}
        \mathfrak{D} = \mu\hspace{0.3mm}\overline{|\gradx\fatu|^2}+\nu\hspace{0.3mm}\overline{|\divx(\fatu)|^2} - \left[\mu|\gradx\fatu_{\mathcal{V}}|^2+\nu|\divx(\fatu_{\mathcal{V}})|^2\right]\in \mathcal{M}^+(\overline{\Omega_T})\,.
    \end{align*}
    }
    \item{Using the density of $\Cinftyc(\Omega)$ in $W^{1,2}_0(\Omega)$ as well as Gauss's theorem, we easily verify that
    \begin{align*}
        \inttauinto \left[\mu|\gradx\fatu_{\mathcal{V}}|^2+\nu|\divx(\fatu_{\mathcal{V}})|^2\right]\dxdt = \inttauinto \mathbb{S}(\gradx\fatu_{\mathcal{V}}):\gradx\fatu_{\mathcal{V}}\;\dxdt\,.
    \end{align*}}
\end{itemize}
In particular, we may rewrite (\ref{vfdvds}) in the form
\begin{align*}
    &\into\left\langle\mathcal{V}_{(\tau,\,\cdot\,)};\frac{1}{2}\,\bvarrho\,|\bfatu|^2+P(\bvarrho\,\btheta)\right\rangle\dx + \inttauinto \mathbb{S}(\gradx\fatu_{\mathcal{V}}):\gradx\fatu_{\mathcal{V}}\;\dxdt \notag \\[2mm]
    &\qquad + \int_{\,\overline{\Omega}}\dd\mathfrak{E}(\tau) + \int_{\,\overline{\Omega_\tau}}\dd\mathfrak{D} \leq \into\left[\,\frac{1}{2}\,\varrho_0|\fatu_0|^2+P(\varrho_0\thetazero)\right]\dx
    \qquad \text{for a.a. $\tau\in(0,T).$}
\end{align*}

\pagebreak
\noindent \textbf{Continuity equation.} \\[2mm]
In view of (\ref{conv_1}), (\ref{conv_2}), and (\ref{conv_init_c_z}), we may perform the limit $h\downarrow 0$ in (\ref{cons_r_method}). We obtain
\begin{align}
    -\into \varrho_0\,\varphi(0,\cdot)\;\dx = \inttinto \big[\omean{\mathcal{V};\bvarrho\,}\,\pt\varphi+\omean{\mathcal{V};\bvarrho\,\bfatu}\cdot\gradx\varphi\big]\,\dxdt \label{conv_r_method_1}
\end{align}
for all $\varphi\in \Cinftyc([0,T)\times\overline{\Omega})$. Following the arguments presented in \citep[Chapter 2.1.3]{Feireisl_Lukacova_Book}, we deduce from (\ref{conv_r_method_1}) that $\omean{\mathcal{V};\bvarrho\,}\in C_{\mathrm{weak}}([0,T];\Lp{\gamma}{0.5mm}(\Omega))$. Consequently, (\ref{conv_r_method_1}) can be rewritten in the form
\begin{align}
    \left[\,\into \omean{\mathcal{V}_{(t,\,\cdot\,)};\bvarrho\,}\,\varphi(t,\cdot)\;\dx\right]_{t\,=\,0}^{t\,=\,\tau} = \inttauinto \big[\omean{\mathcal{V};\bvarrho\,}\,\pt\varphi+\omean{\mathcal{V};\bvarrho\,\bfatu}\cdot\gradx\varphi\big]\,\dxdt \label{conv_r_method_2}
\end{align}
for all $\tau\in[0,T]$ and $\varphi\in\Cinfty(\overline{\Omega_T})$. Here, we have set
$
    \mathcal{V}_{(0,\fatx)} = \delta_{(\varrho_0(\fatx),\thetazero(\fatx),\fatu_0(\fatx))}\,\footnotemark.
$
\footnotetext{$\delta_{\faty}$ denotes the Dirac measure centered on $\faty\in\reals^{d+2}$.}Due to the integrability properties of $\omean{\mathcal{V};\bvarrho\,}$ and $\omean{\mathcal{V};\bvarrho\,\bfatu}$, the boundedness of $\Omega_T$, the fact that $\Cinfty(\overline{\Omega_T})$ is dense in $W^{1,p}(\Omega_T)$ for every $p\in[1,\infty)$, and the Sobolev embedding $W^{1,q}(\Omega_T)\hookrightarrow C(\overline{\Omega_T})$ for $q>d+1$,
we may extend the validity of (\ref{conv_r_method_2}) to test functions $\varphi$ of the class $W^{1,\infty}(\Omega_T)$. \\[2mm]

\noindent \textbf{Potential temperature equation.} \\[2mm]
The potential temperature equation can be handled in the same manner as the continuity equation. \\[2mm]

\noindent \textbf{Momentum equation.} \\[2mm]
Thanks to (\ref{conv_3})--(\ref{conv_5}), (\ref{reg_u_limit}), and (\ref{conv_init_m}), we can take the limit $h\downarrow 0$ in (\ref{cons_m_method}). We obtain
\begin{align}
    &-\into \varrho_0\fatu_0\cdot\bm{\varphi}(0,\cdot)\;\dx + \inttinto\big[\mu\gradx\fatu_{\mathcal{V}}:\gradx\bm{\varphi}+\nu\,\divx(\fatu_{\mathcal{V}})\,\divx(\bm{\varphi})\big]\,\dxdt \notag \\[2mm]
    & \qquad = \inttinto \omean{\mathcal{V};\bvarrho\,\bfatu}\cdot\pt\bm{\varphi}\;\dxdt + \inttinto \gradx\bm{\varphi}:\dd\!\left(\,\overline{\varrho\fatu\otimes\fatu + \p(\varrho\theta)\mathbb{I}}+ \mathfrak{R}(\delta,\varrho,\theta)\mathbb{I}\right)\dt \label{conv_m_method_1}
\end{align}
for all $\bm{\varphi}\in \Cinftyc([0,T)\times\Omega)^d$. Next, we make the following observations:
\begin{itemize}
    \item{Analogous to above, it follows from (\ref{conv_m_method_1}) that $\omean{\mathcal{V};\bvarrho\,\bfatu}\in C_{\mathrm{weak}}([0,T];\Lp{\tfrac{2\gamma}{\gamma+1}}{0.5mm}(\Omega)^d)$.}
    \item{Using Gauss's theorem, we conclude that
    \begin{align*}
        \inttinto\big[\mu\gradx\fatu_{\mathcal{V}}:\gradx\bm{\varphi}+\nu\,\divx(\fatu_{\mathcal{V}})\,\divx(\bm{\varphi})\big]\,\dxdt = \inttinto \mathbb{S}(\gradx\fatu_{\mathcal{V}}):\gradx\bm{\varphi}\;\dxdt\,.
    \end{align*}}
    \item{Due to (\ref{well_defined}), $\omean{\mathcal{V};\bvarrho\bfatu\otimes\bfatu + \p(\bvarrho\,\btheta\,)\mathbb{I}}\in \Lp{1}{0mm}(\Omega_T)$. Moreover, applying \citep[Lemma 2.1]{Feireisl_Wiedemann} with $F\equiv 0$ and $G(\bvarrho,\btheta,\bfatu) = (\xi\otimes\xi):(\bvarrho\,\bfatu\otimes\bfatu + \p(\bvarrho\,\btheta\,)\mathbb{I})$, $\xi\in\reals^d$, we deduce that
    \begin{align*}
        \bm{\mathfrak{R}} = \left(\,\overline{\varrho\fatu\otimes\fatu + \p(\varrho\theta)\mathbb{I}} - \omean{\mathcal{V};\bvarrho\,\bfatu\otimes\bfatu + \p(\bvarrho\,\btheta\,)\mathbb{I}}\right) + \mathfrak{R}(\delta,\varrho,\theta)\mathbb{I} \in \Lp{\infty}{0.2mm}_{\mathrm{weak}^\star}(0,T;\mathcal{M}(\overline{\Omega})^{d\times d}_{\mathrm{sym},+})\,.
    \end{align*}
    In particular,
    \begin{align*}
        \underline{d}\mathfrak{E} \leq \mathrm{tr}[\bm{\mathfrak{R}}] \leq \overline{d}\mathfrak{E}\,, \quad \text{where} \quad \underline{d} = \min\{2,d(\gamma-1)\} \quad \text{and} \quad \overline{d} = \max\{d,d(\gamma-1)\}\,.
    \end{align*}}
\end{itemize}
Consequently, (\ref{conv_m_method_1}) can be rewritten as
\begin{align}
    &\left[\,\into \omean{\mathcal{V}_{(t,\,\cdot\,)};\bvarrho\,\bfatu}\cdot\bm{\varphi}(t,\cdot)\;\dx\right]^{t\,=\,\tau}_{t\,=\,0} + \inttauinto \mathbb{S}(\gradx\fatu_{\mathcal{V}}):\gradx\bm{\varphi}\;\dxdt \notag \\[2mm]
    & \qquad = \inttauinto \big[\omean{\mathcal{V};\bvarrho\,\bfatu}\cdot\pt\bm{\varphi} + \omean{\mathcal{V};\bvarrho\,\bfatu\otimes\bfatu + \p(\bvarrho\,\btheta\,)\mathbb{I}}:\gradx\bm{\varphi}\big]\,\dxdt + \inttauinto \gradx\bm{\varphi}:\dd\bm{\mathfrak{R}}(t)\hspace{0.3mm}\dt \label{conv_m_method_2}
\end{align}
for all $\tau\in[0,T]$ and all $\bm{\varphi}\in \Cinftyc([0,T]\times\Omega)^d$. It is easy to see that (\ref{conv_m_method_2}) also holds for test functions $\bm{\varphi}$ of the class $\Ckc{1}([0,T]\times\Omega)^d$. Moreover, for every $\bm{\varphi}\in \Ck{1}(\overline{\Omega_T})^d$ satisfying $\bm{\varphi}|_{[0,T]\times\po}=\bm{0}$ we can construct a sequence $\{\bm{\varphi}_n\}_{n\,\in\,\naturals}\subset \Ckc{1}([0,T]\times\Omega)^d$ of smoothed truncations of $\bm{\varphi}$ such that
\begin{center}
    $(\bm{\varphi}_n,\gradx\bm{\varphi}_n,\pt\bm{\varphi}_n)\to(\bm{\varphi},\gradx\bm{\varphi},\pt\bm{\varphi})$ pointwise in $\Omega_T$ \qquad and  \qquad
    {${\displaystyle\sup_{n\,\in\,\naturals}\left\{\maxnormk{\bm{\varphi}_n}{1}{(\overline{\Omega_T})}\right\}} < \infty$.}
\end{center}
Accordingly, we may use the dominated convergence theorem to extend the validity of (\ref{conv_m_method_2}) to test functions $\bm{\varphi}\in \Ck{1}(\overline{\Omega_T})^d$ satisfying $\bm{\varphi}|_{[0,T]\times\po}=\bm{0}$. \\[2mm]

\noindent \textbf{Poincaré's inequality.} \\[2mm]
Let $\tau\in[0,T]$ and $\barepsilon>0$ be arbitrary. Further, let $\{\vartheta_k\}_{k\,\in\,\naturals}\subset C_c(\reals^{d+2})$ be the sequence of functions defined by
\begin{align*}
    \vartheta_k(\faty)=\left\{{\arraycolsep=0pt
    \begin{array}{cl}
        1 & \quad \text{if $|\faty|<k$,} \\[2mm]
        1+k-|\faty| & \quad \text{if $|\faty|\in[k,k+1)$, and} \\[2mm]
        0 & \quad \text{else,}
    \end{array}}\right.
\end{align*}
and $C\geq 1$ a constant such that
\begin{align}
    \lnorm{v_h}{2}{(\Omega)}{0.55}{-0.5mm}&\leq C\,\lnorm{\gradh v_h}{2}{(\Omega)^d}{0.55}{-0.5mm} \quad \text{for all $h\in(0,H]$ and $v_h\in V_{0,h}$} \label{sobolev_C}\\[2mm]
    \text{and} \qquad \lnorm{v}{2}{(\Omega)}{0.55}{-0.5mm} &\leq C\,\lnorm{\gradx v}{2}{(\Omega)^d}{0.55}{-0.5mm} \quad \text{for all $v\in W^{1,2}_0(\Omega)$.} \label{poincare_C}
\end{align}
Clearly, such a constant exists due to (\ref{sobolev}) and the usual Poincaré inequality.
Due to (\ref{poincare_C}), we observe that
\begin{align*}
    \inttauinto \omean{\mathcal{V};|\bfatu-\bm{U}|^2}\;\dxdt &\leq 2\inttauinto \omean{\mathcal{V};|\bfatu-\fatu_{\mathcal{V}}|^2}\;\dxdt + 2\inttauinto |\fatu_{\mathcal{V}}-\bm{U}|^2\;\dxdt \notag \\[2mm]
    & \leq 2\inttauinto \omean{\mathcal{V};|\bfatu-\fatu_{\mathcal{V}}|^2}\;\dxdt + 2C^{\,2}\inttauinto |\gradx(\fatu_{\mathcal{V}}-\bm{U})|^2\;\dxdt\,.
\end{align*}
Using the monotone convergence theorem, Lemma \ref{lem_aux_piv}, Lemma \ref{lem_conv_piq_piv}(\hyperlink{lem_conv_piq_piv_ii}{ii}), (\hyperlink{lem_conv_piq_piv_iii}{iii}), the first estimate in (\ref{est_vh_1}), the first estimate in (\ref{energy_est_2}), and (\ref{conv_3}), we deduce that
\begin{align}
    & \inttauinto \omean{\mathcal{V};|\bfatu-\fatu_{\mathcal{V}}|^2}\;\dxdt = \lim_{k\,\to\,\infty}\inttauinto \omean{\mathcal{V};|\bfatu-\fatu_{\mathcal{V}}|^2\vartheta_k(\bvarrho,\btheta,\bfatu)}\;\dxdt \notag \\[2mm]
    & \quad = \lim_{k\,\to\,\infty}\left(\,\lim_{h\,\downarrow\,0}\inttauinto |\overline{\fatu_h}-\fatu_{\mathcal{V}}|^2\vartheta_k(\varrho_h,\thetah,\overline{\fatu_h})\;\dxdt\right) \leq \liminf_{h\,\downarrow\,0}\inttauinto |\overline{\fatu_h}-\fatu_{\mathcal{V}}|^2\;\dxdt \notag \\[2mm]
    & \quad \leq 3\liminf_{h\,\downarrow\,0}\inttauinto \left(|\overline{\fatu_h}-\fatu_h|^2 + |\fatu_h-\pivh\fatu_{\mathcal{V}}|^2 + |\pivh\fatu_{\mathcal{V}}-\fatu_{\mathcal{V}}|^2\right)\dxdt \notag \\[2mm]
    & \quad \leq 12C^{\,2}\liminf_{h\,\downarrow\,0}\inttauinto |\gradh\fatu_h-\gradh\pivh\fatu_{\mathcal{V}}|^2\;\dxdt \notag \\[2mm]
    & \quad \leq 24C^{\,2}\liminf_{h\,\downarrow\,0}\inttauinto \left(|\gradh\fatu_h-\gradx\fatu_{\mathcal{V}}|^2 + |\gradx\fatu_{\mathcal{V}}-\gradh\pivh\fatu_{\mathcal{V}}|^2\right)\dxdt \notag \\[2mm]
    & \quad = 24C^{\,2}\left(\liminf_{h\,\downarrow\,0}\inttauinto |\gradh\fatu_h|^2\;\dxdt - \inttauinto |\gradx\fatu_{\mathcal{V}}|^2\;\dxdt\right) \leq \frac{24C^{\,2}}{\mu} \int_{\,\overline{\Omega_\tau}}\dd\mathfrak{D} \label{conv_comp_poincare}
\end{align}
for almost all $\tau\in(0,T)$. Consequently, choosing $C_P=48C^{\,2}/\mu$ we obtain
\begin{align*}
    \inttauinto \omean{\mathcal{V};|\bfatu-\bm{U}|^2}\;\dxdt \leq C_P\left(\,\inttauinto |\gradx(\fatu_{\mathcal{V}}-\bm{U})|^2\;\dxdt + \int_{\,\overline{\Omega_\tau}}\dd\mathfrak{D}\right).
\end{align*}

\noindent \textbf{Entropy inequality.} \\[2mm]
Due to (\ref{conv_1}), (\ref{conv_2}), and (\ref{conv_init_c_z}), we may take the limit $h\downarrow 0$ in (\ref{cons_entropy_inequ}). We obtain
\begin{align}
    -\into \varrho_0\ln(\thetazero)\,\psi(0,\cdot)\;\dx \geq \inttinto \big[\omean{\mathcal{V};\bvarrho\hspace{0.3mm}\ln(\btheta)}\,\pt\psi + \omean{\mathcal{V};\bvarrho\hspace{0.3mm}\ln(\btheta)\bfatu}\cdot\gradx\psi\big]\,\dxdt \label{conv_entropy_inequ_1}
\end{align}
for all $\psi\in\Cinftyc([0,T)\times\overline{\Omega})$, $\psi\geq 0$. By an approximation argument similar to that in the case of the continuity equation, the validity of (\ref{conv_entropy_inequ_1}) can be extended to test functions $\psi\geq0$ of the class $C_c([0,T)\times\overline{\Omega})\cap W^{1,\infty}(\Omega_T)$. In particular, we may consider test functions of the form $\psi=\phi_{\tau,\overline{\delta}}\,\eta$, where $\eta\in W^{1,\infty}(\Omega_T)$, $\eta\geq 0$, $\tau\in(0,T)$, $\bardelta\in(0,\min\{T-\tau,\tau\})$, and $\phi_{\tau,\overline{\delta}}\in C_c([0,T))$,
\begin{align*}
    \phi_{\tau,\overline{\delta}}(t) = \left\{{\arraycolsep=0pt\begin{array}{cl}
        1 & \quad \text{if $t<\tau-\bardelta$,} \\[2mm]
        \tfrac{1}{2}\!\left(1+(\tau-t)/\bardelta\,\right) & \quad \text{if $t\in[\tau-\bardelta,\tau+\bardelta]$, and} \\[2mm]
        0 & \quad \text{if $t>\tau+\bardelta$.}
    \end{array}}\right.
\end{align*}
Consequently,
\begin{align*}
    &\frac{1}{2\bardelta\vphantom{\scalebox{1.1}{$\bardelta$}}}\int_{\tau-\overline{\delta}}^{\,\tau+\overline{\delta}}\!\!\into\omean{\mathcal{V};\bvarrho\hspace{0.3mm}\ln(\btheta)}\,\eta\;\dxdt - \into \varrho_0\ln(\thetazero)\,\eta(0,\cdot)\;\dx \\[2mm]
    &\qquad\qquad\qquad\qquad \leq \inttinto \phi_{\tau,\overline{\delta}}\big[\omean{\mathcal{V};\bvarrho\hspace{0.3mm}\ln(\btheta)}\,\pt\eta + \omean{\mathcal{V};\bvarrho\hspace{0.3mm}\ln(\btheta)\bfatu}\cdot\gradx\eta\big]\,\dxdt
\end{align*}
for all $\eta\in W^{1,\infty}(\Omega_T)$, $\eta\geq 0$, $\tau\in(0,T)$, $\bardelta\in(0,\min\{T-\tau,\tau\})$. The entropy inequality (\ref{entropy_inequ}) follows by performing the limit $\bardelta\downarrow 0$ in the above inequality. For the limit process, we rely on Lebesgue's differentiation theorem as well as the dominated convergence theorem. This completes the proof of Theorem~\ref{thm_main}.
\end{proof}

From the proof of Theorem \ref{thm_main} it follows that any Young measure generated by a sequence $\{(\varrho_h,\thetah,\overline{\fatu_h})|_{\Omega_T}\}_{h\,\downarrow\,0}$ obtained from a sequence of solutions $\{(\varrho_h,\thetah,\fatu_h)\}_{h\,\downarrow\,0}$ to our FE-FV method (\ref{r_method})--(\ref{m_method}) represents a DMV solution to the Navier-Stokes system with potential temperature transport (\ref{cequation})--(\ref{isen_press}). Moreover, 
$$
\varrho_h \rightharpoonup^\star \omean{\mathcal{V};\bvarrho\,}\  \text{ in } \Lp{\infty}{0.2mm}(0,T;\Lp{\gamma}{0.5mm}(\Omega)),   \
 \thetah \rightharpoonup^\star \omean{\mathcal{V}; \btheta} \  \text {in } \Lp{\infty}{0.2mm}(\Omega_T),\ \mbox{ and }\
\fatu_h \rightharpoonup \omean{\mathcal{V};\bfatu } \ \text{ in }  \Lp{2}{0mm}(\Omega_T)^d.
$$

If there is a strong solution to system (\ref{cequation})--(\ref{isen_press}) for given initial data $(\varrho_0,\thetazero,\fatu_0)$, then we may use the DMV-strong uniqueness result established in \citep{Lukacova_Schoemer} to strengthen the aforementioned convergence statement as follows.

\begin{theorem}\label{thm_strong_conv}
Let the assumptions of Theorem $\mathrm{\ref{thm_main}}$ be satisfied and suppose there is a strong solution $(\varrho,\theta,\fatu)$ to system $\mathrm{(\ref{cequation})}$--$\mathrm{(\ref{isen_press})}$ from the regularity class
\begin{align*}
    \varrho,\theta\in \Ck{1}(\overline{\Omega_T})\,, \quad \varrho,\theta>0\,, \quad \fatu\in\Ck{1}(\overline{\Omega_T})\cap \Lp{2}{0mm}(0,T;W^{2,\infty}(\Omega))\,, \quad \fatu|_{[0,T]\times\po}=\bm{0},
\end{align*}
emanating from the chosen initial data. Further, let $(\varrho_h,\thetah,\fatu_h)_{h\,\downarrow\,0}$ be a sequence of solutions to the FE-FV method $\mathrm{(\ref{r_method})}$--$\mathrm{(\ref{m_method})}$ starting from the corresponding discrete initial data defined in $\mathrm{(\ref{disc_int_1})}$ and suppose the parameters satisfy $\mathrm{(\ref{cons_cond})}$. Let $p\in[1,\infty)$ and $q\in[1,2)$ be arbitrary. Then
\begin{align*}
    \varrho_h \rightarrow \varrho \;\;\text{in $\Lp{\gamma}{0mm}(\Omega_T)$,} \quad \thetah\rightarrow\theta \;\; \text{in $\Lp{p}{0.5mm}(\Omega_T)$,} \quad \text{and} \quad \fatu_h\rightarrow\fatu \;\; \text{in $\Lp{q}{0.5mm}(\Omega_T)^d$} \qquad \text{as $h\downarrow 0$.}
\end{align*}
\end{theorem}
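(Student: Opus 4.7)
\textbf{Proof plan for Theorem~\ref{thm_strong_conv}.} The strategy is to upgrade the Young-measure / defect-measure convergence already at our disposal to strong convergence by invoking the DMV-strong uniqueness principle of \citep{Lukacova_Schoemer}. Working along an arbitrary subsequence $h\downarrow 0$ and extracting a further subsequence as in the proof of Theorem~\ref{thm_main}, we obtain a Young measure $\mathcal{V}$ together with defect measures $\mathfrak{E},\mathfrak{D},\bm{\mathfrak{R}}$ such that $\mathcal{V}$ is a DMV solution to (\ref{cequation})--(\ref{isen_press}) emanating from $(\varrho_0,\thetazero,\fatu_0)$. Since a strong solution exists for this data, DMV-strong uniqueness forces
\[
\mathcal{V}_{(t,\fatx)}=\delta_{(\varrho(t,\fatx),\theta(t,\fatx),\fatu(t,\fatx))}\quad\text{for a.a. }(t,\fatx)\in\Omega_T,\qquad \mathfrak{E}=0,\qquad\mathfrak{D}=0,\qquad \bm{\mathfrak{R}}=0.
\]
Because the limit is independent of the chosen subsequence, the conclusions below automatically transfer to the whole sequence by a standard subsequence-of-every-subsequence argument.

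Since $\mathcal{V}$ is a Dirac mass, the triple $(\varrho_h,\thetah,\overline{\fatu_h})$ converges to $(\varrho,\theta,\fatu)$ in measure on $\Omega_T$. Combined with the bounds $c_\star<\thetah<c^\star$ from Corollary~\ref{cor_sol}, Vitali's convergence theorem yields $\thetah\to\theta$ in $L^{p}(\Omega_T)$ for every $p\in[1,\infty)$. Similarly, the uniform $L^{2}(\Omega_T)$ bound on $\fatu_h$ from Corollary~\ref{cor_stab} together with the $L^{2}$ closeness $\|\fatu_h-\overline{\fatu_h}\|_{L^{2}(\Omega_T)^d}\lesssim h$, which follows from the first estimate in \eqref{est_vh_1} and the Poincaré-type bound on $\gradh\fatu_h$, gives the convergence in measure of $\fatu_h$ itself and hence $\fatu_h\to\fatu$ in $L^{q}(\Omega_T)^d$ for every $q\in[1,2)$ by Vitali.

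The delicate point is $\varrho_h\to\varrho$ in $L^{\gamma}(\Omega_T)$. Convergence in measure is again in hand, so by Vitali's theorem it suffices to verify the norm convergence $\|\varrho_h\|_{L^{\gamma}(\Omega_T)}\to\|\varrho\|_{L^{\gamma}(\Omega_T)}$. Inspecting the decompositions derived in the proof of Theorem~\ref{thm_main}, both $\mathfrak{E}$ and $\bm{\mathfrak{R}}$ are sums of two non-negative contributions, one a Young-measure concentration defect and the other the artificial-pressure measure $\mathfrak{R}(\delta,\varrho,\theta)$; the vanishing of $\mathfrak{E}$ and $\bm{\mathfrak{R}}$ therefore forces each contribution to vanish separately. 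In particular, the weak\textsuperscript{$\star$} limits
\[
P(\varrho_h\thetah)\rightharpoonup^\star P(\varrho\theta)\,\dx,\qquad \hd\bigl[\varrho_h^{\,2}+(\varrho_h\thetah)^{2}\bigr]\rightharpoonup^\star 0
\]
have no singular part. Since $P(\varrho_h\thetah)\geq 0$, testing against the constant function $\mathds{1}$ on $\overline{\Omega_T}$ (which is admissible by the $L^{\infty}_{t}L^{1}_{x}$ bound) produces the equality of total masses $\int_{\Omega_T}P(\varrho_h\thetah)\,\dxdt\to\int_{\Omega_T}P(\varrho\theta)\,\dxdt$. Combined with weak convergence in $L^{1}$ of a subsequence — guaranteed by the Dunford-Pettis theorem once equi-integrability is extracted from the absolutely-continuous identification of the weak\textsuperscript{$\star$} limit — this upgrades the convergence to $(\varrho_h\thetah)^{\gamma}\to(\varrho\theta)^{\gamma}$ in $L^{1}(\Omega_T)$. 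Finally, since $\thetah$ is uniformly bounded above and below by $c^\star,c_\star>0$ and $\thetah\to\theta$ in $L^{p}$ for all $p<\infty$, dividing by $\thetah^{\gamma}$ yields $\varrho_h^{\gamma}\to\varrho^{\gamma}$ in $L^{1}(\Omega_T)$ and thereby the desired norm convergence.

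The principal obstacle is the last paragraph: extracting strong $L^{1}$ convergence of the pressure from the sole vanishing of the concentration defects requires ruling out the coexistence of oscillation and concentration. The equi-integrability of $P(\varrho_h\thetah)$ is not automatic from the $L^{\infty}_{t}L^{1}_{x}$ bound, and must be deduced from the identification of the weak\textsuperscript{$\star$} limit with an absolutely continuous measure together with the equality of total masses supplied by the energy balance without defects.
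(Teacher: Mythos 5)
Your overall strategy coincides with the paper's: pass to a subsequence generating a DMV solution as in the proof of Theorem \ref{thm_main}, invoke the DMV--strong uniqueness principle of \citep{Lukacova_Schoemer} to conclude that the Young measure is the Dirac mass at the strong solution and that all defects vanish, and transfer the conclusion to the whole sequence by the subsequence-of-every-subsequence argument. Where you diverge is in how the strong convergence is then extracted. The paper takes from the uniqueness machinery a concrete list of consequences --- notably the strong convergence $\varrho_{h'}\theta_{h'}\to\varrho\theta$ in $L^\gamma(\Omega_T)$, the weak convergence of the moments $\varrho_{h'}\theta_{h'}^{m}\rightharpoonup\varrho\theta^{m}$, and the convergence of the kinetic-energy norm --- and then performs explicit norm manipulations ($\varrho_{h'}\to\varrho$ in $L^1$ first, then $\theta_{h'}\to\theta$ in $L^p$ by splitting on $\{\varrho_{h'}\le\underline{\varrho}/2\}$, then $\varrho_{h'}\to\varrho$ in $L^\gamma$, and finally the velocity via the weighted quantity $\|\varrho_{h'}|\overline{\fatu_{h'}}-\fatu|^2\|_{L^1}$). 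You instead use the general fact that a Dirac Young measure forces convergence in measure of the generating sequence and close with Vitali: for $\thetah$ and $\fatu_h$ this is arguably cleaner (the uniform $L^\infty$, respectively $L^2$, bounds suffice), and for $\varrho_h$ you re-derive the requisite norm convergence of the pressure from the vanishing of $\mathfrak{E}$ rather than importing it. Both routes are legitimate; yours is somewhat more self-contained about which convergences the companion paper actually supplies.

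Two points need repair. First, passing from $\mathfrak{E}=0$ to a defect-free limit of $P(\varrho_h\thetah)$ \emph{alone} requires splitting the kinetic and pressure contributions: apply the lower-semicontinuity lemma of \citep{Feireisl_Wiedemann} separately to $\tfrac12\varrho_h|\overline{\fatu_h}|^2$ and to $P(\varrho_h\thetah)$, observe that each partial defect is a non-negative measure, and use that their sum together with $\mathfrak{R}(\delta,\varrho,\theta)$ vanishes; you only address the split between the Young-measure defect and the artificial-pressure measure. Second, and more seriously, your justification of the $L^1$ upgrade is incorrect as stated: absolute continuity of the weak$^\star$ limit of a non-negative sequence does \emph{not} imply equi-integrability (spikes of height $n$ and width $n^{-2}$ distributed uniformly over $[0,1]$ converge weak$^\star$ to Lebesgue measure yet are not equi-integrable), so Dunford--Pettis cannot be launched from that identification. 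The step is nevertheless salvageable with the tools you already have: $P(\varrho_h\thetah)\ge 0$ converges in measure to $P(\varrho\theta)$ (from the Dirac Young measure), and $\int_{\Omega_T}P(\varrho_h\thetah)\,\dxdt\to\int_{\Omega_T}P(\varrho\theta)\,\dxdt$ (from the defect-free energy limit tested against $\mathds{1}$); Scheff\'e's lemma then gives convergence in $L^1(\Omega_T)$, hence $\|\varrho_h\thetah\|_{L^\gamma}\to\|\varrho\theta\|_{L^\gamma}$, and the remainder of your argument goes through.
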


\begin{proof}
Let $(\varrho_h,\thetah,\fatu_h)_{h\,\downarrow\,0}$ be a sequence as described above. To prove Theorem \ref{thm_strong_conv}, it suffices to show that every subsequence $(\varrho_{h_\star},\theta_{h_\star},\fatu_{h_\star})_{h_\star\,\downarrow\,0}$ of $(\varrho_h,\thetah,\fatu_h)_{h\,\downarrow\,0}$ possesses a subsequence $(\varrho_{h^{\prime}},\theta_{h^{\prime}},\fatu_{h^{\prime}})_{h^{\prime}\,\downarrow\,0}$ such that
\begin{align*}
    \varrho_{h^{\prime}} \rightarrow \varrho \;\;\text{in $\Lp{\gamma}{0mm}(\Omega_T)$,} \quad \theta_{h^{\prime}}\rightarrow\theta \;\; \text{in $\Lp{p}{0.5mm}(\Omega_T)$,} \quad \text{and} \quad \fatu_{h^{\prime}}\rightarrow\fatu \;\; \text{in $\Lp{q}{0.5mm}(\Omega_T)^d$}
\end{align*}
as ${h^{\prime}}\downarrow 0$. Thus, let $(\varrho_{h_\star},\theta_{h_\star},\fatu_{h_\star})_{h_\star\,\downarrow\,0}$ be an arbitrary subsequence of $(\varrho_h,\thetah,\fatu_h)_{h\,\downarrow\,0}$. From the proof of Theorem \ref{thm_main} and the DMV-strong uniqueness principle established in \citep{Lukacova_Schoemer} we deduce that there is a subsequence $(\varrho_{h^{\prime}},\theta_{h^{\prime}},\fatu_{h^{\prime}})_{{h^{\prime}}\,\downarrow\,0}$ of $(\varrho_{h_\star},\theta_{h_\star},\fatu_{h_\star})_{h_\star\,\downarrow\,0}$ such that
\begin{gather*}
    \varrho_{h^{\prime}}\theta_{h^{\prime}}^m \rightharpoonup \varrho\theta^m \;\;\text{in $\Lp{\gamma}{0mm}(\Omega_T)$ for all $m\in\naturals_0$,} \qquad \lnorm{\varrho_{h^{\prime}}|\overline{\fatu_{h^{\prime}}}|^2}{1}{(\Omega_T)}{0.55}{-0.5mm} \to \lnorm{\varrho|\fatu|^2}{1}{(\Omega_T)}{0.55}{-0.5mm}\,,  \\[2mm]
    \varrho_{h^{\prime}}\overline{\fatu_{h^{\prime}}} \rightharpoonup \varrho\fatu \;\;\text{in $\Lp{\tfrac{2\gamma}{\gamma+1}}{0mm}(\Omega_T)^d$,} \qquad \fatu_{h^{\prime}} \rightharpoonup \fatu \;\;\text{in $\Lp{2}{0mm}(\Omega_T)^d$} 
    \qquad \text{and} \qquad \varrho_{h^{\prime}}\theta_{h^{\prime}} \to \varrho\theta \;\;\text{in $\Lp{\gamma}{0mm}(\Omega_T)$}
\end{gather*}
as ${h^{\prime}}\downarrow 0$. Consequently,
\begin{align*}
    \lnorm{\varrho_{h^{\prime}}-\varrho}{1}{(\Omega_T)}{0.55}{-0.5mm} &\lesssim \lnorm{\varrho_{h^{\prime}}\theta-\varrho\theta}{1}{(\Omega_T)}{0.55}{-0.5mm} \lesssim \lnorm{\varrho_{h^{\prime}}\theta_{h^{\prime}}-\varrho\theta}{1}{(\Omega_T)}{0.55}{-0.5mm} + \lnorm{\varrho_{h^{\prime}}\theta-\varrho_{h^{\prime}}\theta_{h^{\prime}}}{1}{(\Omega_T)}{0.55}{-0.5mm} \notag \\[2mm]
    &\lesssim \lnorm{\varrho_{h^{\prime}}\theta_{h^{\prime}}-\varrho\theta}{\gamma}{(\Omega_T)}{0.55}{-0.5mm} + \left(\lnorm{\varrho_{h^{\prime}}}{1}{(\Omega_T)}{0.55}{-0.5mm}\,\lnorm{\varrho_{h^{\prime}}(\theta-\theta_{h^{\prime}})^2}{1}{(\Omega_T)}{0.55}{-0.5mm}\right)^{1/2} \notag \\[2mm]
    &\lesssim \lnorm{\varrho_{h^{\prime}}\theta_{h^{\prime}}-\varrho\theta}{\gamma}{(\Omega_T)}{0.55}{-0.5mm} + \explnorm{\varrho_{h^{\prime}}(\theta-\theta_{h^{\prime}})^2}{1}{(\Omega_T)}{0.55}{-0.5mm}{0.5mm}{1/2} \xrightarrow{\,{h^{\prime}}\,\downarrow\,0\,} 0\,,
\end{align*}
i.e., $\varrho_{h^{\prime}}\to\varrho$ in $\Lp{1}{0mm}(\Omega_T)$ as ${h^{\prime}}\downarrow 0$. Therefore,
\begin{align*}
    \lnorm{(\theta_{h^{\prime}}-\theta)^{2m}}{1}{(\Omega_T)}{0.55}{-0.5mm} &\lesssim \lnorm{\mathds{1}_{\{\varrho_{h^{\prime}}\,\leq\, \underline{\varrho}/2\}}(\theta_{h^{\prime}}-\theta)^{2m}}{1}{(\Omega_T)}{0.55}{-0.5mm} + \lnorm{\varrho_{h^{\prime}}(\theta_{h^{\prime}}-\theta)^{2m}}{1}{(\Omega_T)}{0.55}{-0.5mm} \\[2mm]
    &\lesssim \lnorm{\mathds{1}_{\{\varrho_{h^{\prime}}\,\leq\, \underline{\varrho}/2\}}}{1}{(\Omega_T)}{0.55}{-0.5mm} + \lnorm{\varrho_{h^{\prime}}(\theta_{h^{\prime}}-\theta)^{2m}}{1}{(\Omega_T)}{0.55}{-0.5mm} \xrightarrow{\,{h^{\prime}}\,\downarrow\,0\,} 0
\end{align*}
for all $m\in\naturals$, where $\underline{\varrho}=\inf_{(t,\fatx)\,\in\,\Omega_T}\{\varrho(t,\fatx)\}>0$. That is, $\theta_{h^{\prime}}\to\theta$ in $\Lp{p}{0.5mm}(\Omega_T)$ as ${h^{\prime}}\downarrow 0$. This in turn implies
\begin{align*}
    \lnorm{\varrho_{h^{\prime}}-\varrho}{\gamma}{(\Omega_T)}{0.55}{-0mm} &\lesssim \lnorm{\varrho_{h^{\prime}}\theta_{h^{\prime}}-\varrho\theta_{h^{\prime}}}{\gamma}{(\Omega_T)}{0.55}{-0mm} \lesssim \lnorm{\varrho_{h^{\prime}}\theta_{h^{\prime}}-\varrho\theta}{\gamma}{(\Omega_T)}{0.55}{-0mm} + \lnorm{\varrho\theta-\varrho\theta_{h^{\prime}}}{\gamma}{(\Omega_T)}{0.55}{-0mm} \notag \\[2mm]
    &\lesssim \lnorm{\varrho_{h^{\prime}}\theta_{h^{\prime}}-\varrho\theta}{\gamma}{(\Omega_T)}{0.55}{-0mm} + \lnorm{\theta-\theta_{h^{\prime}}}{\gamma}{(\Omega_T)}{0.55}{-0mm} \xrightarrow{\,{h^{\prime}}\,\downarrow\,0\,} 0\,,
\end{align*}
i.e., $\varrho_{h^{\prime}}\to\varrho$ in $\Lp{\gamma}{0.2mm}(\Omega_T)$ as ${h^{\prime}}\downarrow 0$. 
Finally, if $q\in[1,2)$, then
\begin{align*}
    \lnorm{|\fatu_{h^{\prime}}-\fatu|^{q}}{1}{(\Omega_T)}{0.55}{-0.5mm} &\lesssim \lnorm{|\fatu_{h^{\prime}}-\overline{\fatu_{h^{\prime}}}|^{q}}{1}{(\Omega_T)}{0.55}{-0.5mm} + \lnorm{|\overline{\fatu_{h^{\prime}}}-\fatu|^{q}}{1}{(\Omega_T)}{0.55}{-0.5mm} \\[2mm]
    &\lesssim \explnorm{\fatu_{h^{\prime}}-\overline{\fatu_{h^{\prime}}}}{2}{(\Omega_T)^d}{0.55}{-0.5mm}{0.5mm}{\hspace{0.3mm}q} + \lnorm{|\overline{\fatu_{h^{\prime}}}-\fatu|^{q}}{1}{(\Omega_T)}{0.55}{-0.5mm} \\[2mm]
    &\lesssim (h^{\prime})^q\,\explnorm{\nabla_{\!h^\prime}\fatu_{h^{\prime}}}{2}{(\Omega_T)^{d\times d}}{0.55}{-0.5mm}{0.5mm}{q} + \lnorm{|\overline{\fatu_{h^{\prime}}}-\fatu|^{q}}{1}{(\Omega_T)}{0.55}{-0.5mm} \\[2mm]
    &\lesssim (h^{\prime})^q + \lnorm{\mathds{1}_{\{\varrho_{h^{\prime}}\,\leq\, \underline{\varrho}/2\}}|\overline{\fatu_{h^{\prime}}}-\fatu|^{q}}{1}{(\Omega_T)}{0.55}{-0.5mm} + \lnorm{\varrho_{h^{\prime}}^{q/2}|\overline{\fatu_{h^{\prime}}}-\fatu|^{q}}{1}{(\Omega_T)}{0.55}{-0.5mm} \\[2mm]
    &\lesssim (h^{\prime})^q + \Lnorm{\mathds{1}_{\{\varrho_{h^{\prime}}\,\leq\, \underline{\varrho}/2\}}}{\frac{2}{2-q}}{(\Omega_T)}{0.75}{-0.5mm}\explnorm{\overline{\fatu_{h^{\prime}}}-\fatu}{2}{(\Omega_T)^d}{0.55}{-0.5mm}{0.5mm}{\hspace{0.3mm}q} + \explnorm{\varrho_{h^{\prime}}|\overline{\fatu_{h^{\prime}}}-\fatu|^{2}}{1}{(\Omega_T)}{0.55}{-0.5mm}{0.5mm}{\hspace{0.3mm}q/2} \\[2mm]
    &\phantom{\leq\;} \xrightarrow{\,{h^{\prime}}\,\downarrow\,0\,} 0\,, 
\end{align*}
i.e., $\fatu_{h^{\prime}}\to\fatu$ in $\Lp{q}{0.5mm}(\Omega_T)^d$ as ${h^{\prime}}\downarrow 0$. 
\end{proof}

\section{Conclusions}\label{sec_conlusions}
In the present paper, we introduced DMV solutions to the Navier-Stokes system with potential temperature transport (\ref{cequation})--(\ref{isen_press}) and proved their existence. For the existence proof we examined the convergence properties of solutions to a mixed FE-FV method that is a generalization of the method developed for the barotropic Navier-Stokes equations; see  \citep{Feireisl_Lukacova}, \citep[Chapter 13]{Feireisl_Lukacova_Book}, \citep[Chapter 7]{Feireisl_Karper_Pokorny}. In particular, we showed that any Young measure generated by a sequence $\{(\varrho_h,\thetah,\overline{\fatu_h})|_{\Omega_T}\}_{h\,\downarrow\,0}$ obtained from a sequence of solutions $\{(\varrho_h,\thetah,\fatu_h)\}_{h\,\downarrow\,0}$ to our FE-FV method (\ref{r_method})--(\ref{m_method}) represents a DMV solution to the Navier-Stokes system with potential temperature transport (\ref{cequation})--(\ref{isen_press}).

In order to ensure the validity of our existence result for all physically relevant values of the adiabatic index $\gamma$ -- that is, $\gamma\in(1,2]$ if $d=2$ and $\gamma\in(1,5/3]$ if $d=3$ -- we added two artificial pressure terms to our method. In the case of values of $\gamma$ close to $1$, these terms provided us with sufficiently good stability estimates for the limit process. In the limit process itself, we profited from the generality of DMV solutions that allowed us to hide the terms arising from the artificial pressure terms in the energy concentration defect and the Reynolds concentration defect, respectively. The strategy of adding artificial pressure terms  points out a flexibility of the DMV concept. Indeed, it would  not work in the framework of weak solutions.

In spite of the generality of DMV solutions to system (\ref{cequation})--(\ref{isen_press}), we can show DMV-strong uniqueness, i.e., provided there is a strong solution, we can show that in a suitable sense any DMV solution on the same time interval coincides with it. We will present the detailed result in our upcoming paper \citep{Lukacova_Schoemer}. Here, we made use of this result to prove the strong convergence of the solutions to our FE-FV method to the strong solution of the system.

\medskip
\noindent \textbf{Conflict of interests:} On behalf of all authors, the corresponding author states that there is no conflict of interest.
\medskip
\bibliographystyle{unsrt}
\bibliography{Bibliography_new}

\appendix
\section{Appendix}\label{sec_appendix}
\subsection{Mesh-related estimates}\label{subsec_mesh_est}
We summarize several important mesh-related estimates; see, e.g., \citep{Feireisl_Lukacova_Book} and the references therein.
We begin with the discrete trace and inverse inequalities. We have
\begin{align}
    \lnorm{r_K}{p}{(\sigma)}{0.55}{-0.0mm}\lesssim h^{-1/p}\,\lnorm{r}{p}{(K)}{0.55}{-0.0mm} \qquad \text{and} \qquad \lnorm{r}{p}{(\Omega_h)}{0.55}{-0.0mm}\lesssim h^{d\left(\frac{1}{p}-\frac{1}{q}\right)}\,\lnorm{r}{q}{(\Omega_h)}{0.55}{-0.0mm} \label{trace_neg_lp}
\end{align}
for all $r\in Q_h$, all $K\in\mathcal{T}_h$, all $\sigma\in\mathcal{E}_h(K)$, and all $1\leq q\leq p\leq\infty$. In addition,
\begin{gather}
    \lnorm{v-\overline{v}}{p}{(K)}{0.55}{-0.0mm}\lesssim h\,\lnorm{\gradh v}{p}{(K)^d}{0.55}{-0.0mm}\,, \qquad \lnorm{v-\smallgmean{v}}{p}{(\sigma)}{0.55}{-0.0mm}\lesssim h\,\lnorm{\gradh v}{p}{(\sigma)^d}{0.55}{-0.0mm}\,, \label{est_vh_1} \\[2mm]
    \text{and} \qquad \lnorm{\smallgmean{v}}{p}{(\sigma)}{0.55}{-0.0mm}\lesssim h^{-1/p}\left(\lnorm{v}{p}{(K)}{0.55}{-0.0mm}+h\,\lnorm{\gradh v}{p}{(K)^d}{0.55}{-0.0mm}\right) \label{est_vh_2}
\end{gather}
are valid for all $p\in[1,\infty]$, all $v\in V_{0,h}$, all $K\in\mathcal{T}_h$, and all $\sigma\in\mathcal{E}_h(K)$. Moreover, given $\phi\in\Ck{1}(\overline{\Omega})$, an application of Taylor's theorem yields
\begin{align}
    \Lnorm{\Jump{\,\overline{\phi}\,}}{\infty}{(\sigma)}{0.55}{-0.0mm} & \lesssim h\,\maxnormk{\phi}{1}{(\overline{\Omega})} \qquad \text{for all $\sigma\in\mathcal{E}_{h,\mathrm{int}}$,} \label{est_piqh_1} \\[2mm]
    \Lnorm{\phi-\overline{\phi}_K}{\infty}{(\sigma)}{0.55}{-0.0mm} & \lesssim h\,\maxnormk{\phi}{1}{(\overline{\Omega})} \qquad \text{for all $K\in\mathcal{T}_h$ and all $\sigma\in\mathcal{E}_h(K)$,} \label{est_piqh_2} \\[3mm]
    \qquad \Lnorm{\phi-\smallgmean{\phi}}{\infty}{(\sigma)}{0.55}{-0.0mm} & \lesssim h\,\maxnormk{\phi}{1}{(\overline{\Omega})} \qquad \text{for all $K\in\mathcal{T}_h$ and all $\sigma\in\mathcal{E}_h(K)$,} \label{est_piqh_3} \\[2mm]
    \Lnorm{\Jump{\,\overline{\pivh\phi}\,}}{\infty}{(\sigma)}{0.55}{-0.0mm} & \lesssim h\,\maxnormk{\phi}{1}{(\overline{\Omega})} \qquad \text{for all $\sigma\in\mathcal{E}_{h,\mathrm{int}}$,} \label{est_pivh_3} \\[2mm]
    \Lnorm{\phi-\overline{\pivh\phi}_K}{\infty}{(\sigma)}{0.55}{-0.0mm} & \lesssim h\,\maxnormk{\phi}{1}{(\overline{\Omega})} \qquad \text{for all $K\in\mathcal{T}_h$ and all $\sigma\in\mathcal{E}_h(K)$,} \label{est_pivh_4} \\[3mm]
    \text{and} \qquad \Lnorm{\phi-\overline{\pivh\phi}}{\infty}{(\Omega_h)}{0.55}{-0.0mm} & \lesssim h\,\maxnormk{\phi}{1}{(\overline{\Omega})}\,. \label{est_pivh_5}
\end{align}
Next, combining \citep[Theorem 6.1]{Pietro_Ern} with \citep[Lemma 2.2]{Gallouet_Herbin_Latche} we obtain a \textit{discrete version of Poincaré's inequality}, namely
\begin{align}
    \lnorm{v}{q}{(\Omega_h)}{0.55}{-0.0mm} \lesssim \lnorm{\gradh v}{2}{(\Omega_h)^d}{0.55}{-0.5mm} \label{sobolev}
\end{align}
for all $v\in V_{0,h}$, where $q\in[1,\infty)$ if $d=2$ and $q\in[1,6]$ if $d=3$. Due to \citep[Theorem 5]{Ciarlet_Raviart}, we have the following estimates for the projection operators $\piqh$ and $\pivh$:
\begin{align}
    \lnorm{\phi-\overline{\phi}}{q}{(\Omega_h)}{0.55}{0.0mm} \equiv \lnorm{\phi-\piqh\phi}{q}{(\Omega_h)}{0.55}{0.0mm} &\lesssim h\,\wnorm{\phi}{1,q}{(\Omega)}{0.55}{-0.5mm}\,, \label{est_piqh_4} \\[2mm]
    \lnorm{\phi-\pivh\phi}{q}{(\Omega_h)}{0.55}{0.0mm} + h\,\lnorm{\gradx \phi-\gradh\pivh\phi}{q}{(\Omega_h)^d}{0.55}{0.0mm} &\lesssim h\,\wnorm{\phi}{1,q}{(\Omega)}{0.55}{-0.5mm}\,, \label{est_pivh_1} \\[2mm]
    \lnorm{\psi-\pivh\psi}{q}{(\Omega_h)}{0.55}{0.0mm} + h\,\lnorm{\gradx \psi-\gradh\pivh\psi}{q}{(\Omega_h)^d}{0.55}{0.0mm} &\lesssim h^2\,\wnorm{\psi}{2,q}{(\Omega)}{0.55}{-0.5mm} \label{est_pivh_2}
\end{align}
for all $q\in[1,\infty]$, all $\phi\in W^{1,q}(\Omega)$, and all $\psi\in W^{2,q}(\Omega)$. The latter estimates are also known as the \textit{Crouzeix-Raviart estimates}. 

\begin{remark}\label{rem_proj_cont}
Clearly, the operators $\piqh$ and $\pivh$ are linear. Furthermore, we may use (\ref{est_pivh_1}) and the triangle inequality to deduce that
there exists an $h$-independent constant $C>0$ such that
\begin{align}
    \lnorm{\pivh v}{q}{(\Omega)}{0.55}{-0.5mm}\leq (1+Ch)\,\wnorm{v}{1,q}{(\Omega)}{0.55}{-0.5mm} \qquad \text{for all $v\in W^{1,q}(\Omega)$, $q\in[1,\infty]$.}
\end{align}
Consequently, $\pivh$ is continuous. The continuity of $\piqh$ is a consequence of Jensen's inequality which yields (cf. \citep[p.90]{Feireisl_Karper_Pokorny})
\begin{align}
    \lnorm{\piqh v}{q}{(\Omega)}{0.55}{-0.5mm}\leq \lnorm{v}{q}{(\Omega)}{0.55}{-0.5mm} \qquad \text{for all $v\in \Lp{q}{0mm}(\Omega)$, $q\in[1,\infty]$.} \label{bound_piq}
\end{align}
\end{remark}

\subsection{Auxiliary results for the projections \texorpdfstring{$\piqh$}{\textbackslash Pi\_\{Q,h\}} and \texorpdfstring{$\pivh$}{\textbackslash Pi\_\{V,h\}}}
This section is devoted to some important auxiliary results concerning the projections $\piqh$ and $\pivh$. Combining suitable density arguments with the estimates (\ref{est_piqh_4})--(\ref{est_pivh_2}), one easily establishes the subsequent lemma.
\begin{lemma}\label{lem_conv_piq_piv}
Let $p\in[1,\infty)$ be arbitrary. Then \hypertarget{lem_conv_piq_piv_i}{}
\begin{enumerate}
    \item[$\mathrm{(i)}$]{$\lnorm{\piqh v-v}{p}{(\Omega)}{0.55}{0mm} \xrightarrow{\,h\,\downarrow\,0\,} 0 \quad \text{for all $v\in\Lp{p}{0.5mm}(\Omega)$,}$ \hypertarget{lem_conv_piq_piv_ii}{}}
    \item[$\mathrm{(ii)}$]{$\lnorm{\pivh v-v}{p}{(\Omega)}{0.55}{0mm} \xrightarrow{\,h\,\downarrow\,0\,} 0 \quad \text{for all $v\in W^{1,p}(\Omega)$, and}$ \hypertarget{lem_conv_piq_piv_iii}{}}
    \item[$\mathrm{(iii)}$]{$\lnorm{\gradh\pivh v-\gradx v}{p}{(\Omega)^d}{0.55}{0mm} \xrightarrow{\,h\,\downarrow\,0\,} 0 \quad \text{for all $v\in W^{1,p}(\Omega)$.}$}
\end{enumerate}
\end{lemma}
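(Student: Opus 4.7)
The strategy is the classical three-$\varepsilon$ density argument: on a dense subclass of smooth functions the convergence rates (\ref{est_piqh_4})--(\ref{est_pivh_2}) give the assertions immediately, and on the full spaces $L^p(\Omega)$ and $W^{1,p}(\Omega)$ one transfers the convergence through uniform-in-$h$ boundedness of the projections. I will first fix the continuity properties, then carry out the limit for each of the three statements.

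\textbf{Continuity of the projections.} For $\piqh$ the bound $\lnorm{\piqh v}{p}{(\Omega)}{0.55}{-0.5mm}\leq\lnorm{v}{p}{(\Omega)}{0.55}{-0.5mm}$ is already recorded in (\ref{bound_piq}). For $\pivh$ I will invoke Remark \ref{rem_proj_cont}, which gives $\lnorm{\pivh v}{p}{(\Omega)}{0.55}{-0.5mm}\leq(1+Ch)\wnorm{v}{1,p}{(\Omega)}{0.55}{-0.5mm}$; moreover, combining the triangle inequality with the Crouzeix--Raviart estimate (\ref{est_pivh_1}) yields $\lnorm{\gradh\pivh v}{p}{(\Omega)^d}{0.55}{-0.5mm}\lesssim\wnorm{v}{1,p}{(\Omega)}{0.55}{-0.5mm}$ uniformly in $h\in(0,H]$.

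\textbf{Proof of (i).} Since $\Omega$ is a bounded Lipschitz domain, $C^\infty(\overline{\Omega})$ is dense in $\Lp{p}{0.5mm}(\Omega)$. Given $v\in\Lp{p}{0.5mm}(\Omega)$ and $\varepsilon>0$, pick $\phi\in C^\infty(\overline{\Omega})$ with $\lnorm{v-\phi}{p}{(\Omega)}{0.55}{-0.5mm}<\varepsilon$. Writing
\[
\piqh v-v=\piqh(v-\phi)+(\piqh\phi-\phi)+(\phi-v),
\]
the continuity $\lnorm{\piqh(v-\phi)}{p}{(\Omega)}{0.55}{-0.5mm}\leq\varepsilon$ handles the first and third terms, while the middle term satisfies $\lnorm{\piqh\phi-\phi}{p}{(\Omega)}{0.55}{-0.5mm}\lesssim h\,\wnorm{\phi}{1,p}{(\Omega)}{0.55}{-0.5mm}$ by (\ref{est_piqh_4}) and hence vanishes as $h\downarrow 0$. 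Thus $\limsup_{h\downarrow 0}\lnorm{\piqh v-v}{p}{(\Omega)}{0.55}{-0.5mm}\leq 2\varepsilon$, and $\varepsilon$ being arbitrary concludes the argument.

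\textbf{Proof of (ii) and (iii).} Since $\Omega$ is a bounded Lipschitz domain, $C^\infty(\overline{\Omega})$ is dense in $W^{1,p}(\Omega)$. Given $v\in W^{1,p}(\Omega)$ and $\varepsilon>0$, pick $\psi\in C^\infty(\overline{\Omega})\subset W^{2,p}(\Omega)$ with $\wnorm{v-\psi}{1,p}{(\Omega)}{0.55}{-0.5mm}<\varepsilon$. Splitting
\[
\pivh v-v=\pivh(v-\psi)+(\pivh\psi-\psi)+(\psi-v),
\]
the uniform-in-$h$ continuity of $\pivh$ established above controls the first term by a constant multiple of $\varepsilon$, the third term is smaller than $\varepsilon$ by choice of $\psi$, and the middle term is $\mathcal{O}(h^2\wnorm{\psi}{2,p}{(\Omega)}{0.55}{-0.5mm})$ by (\ref{est_pivh_2}); this proves (ii). For (iii) I repeat the same decomposition at the level of gradients,
\[
\gradh\pivh v-\gradx v=\gradh\pivh(v-\psi)+\big(\gradh\pivh\psi-\gradx\psi\big)+\gradx(\psi-v),
\]
use the uniform bound $\lnorm{\gradh\pivh(v-\psi)}{p}{(\Omega)^d}{0.55}{-0.5mm}\lesssim\wnorm{v-\psi}{1,p}{(\Omega)}{0.55}{-0.5mm}$ for the first term, $\wnorm{v-\psi}{1,p}{(\Omega)}{0.55}{-0.5mm}<\varepsilon$ for the third, and the $\mathcal{O}(h\,\wnorm{\psi}{2,p}{(\Omega)}{0.55}{-0.5mm})$ bound from (\ref{est_pivh_2}) for the middle term. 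Letting $h\downarrow 0$ and then $\varepsilon\downarrow 0$ completes the proof.

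The only mildly delicate point is verifying the uniform-in-$h$ $W^{1,p}\to L^p$ boundedness of $\gradh\circ\pivh$; all other steps are entirely routine once the quoted estimates are in hand.
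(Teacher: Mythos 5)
Your proof is correct and follows essentially the same route the paper intends: the paper disposes of this lemma in one line ("combining suitable density arguments with the estimates (\ref{est_piqh_4})--(\ref{est_pivh_2})"), and your three-$\varepsilon$ argument with the uniform boundedness of $\piqh$ from (\ref{bound_piq}) and of $\pivh$, $\gradh\pivh$ from Remark \ref{rem_proj_cont} and (\ref{est_pivh_1}) is precisely that argument made explicit. The only detail you silently pass over is that the quoted estimates are stated on $\Omega_h$ while the lemma's norms are over $\Omega$, so one should add the trivial observation that the contribution of $\Omega\setminus\Omega_h$ (where $\piqh\phi$ and $\pivh\psi$ vanish) tends to zero because $|\Omega\setminus\Omega_h|\to 0$ by (\ref{comp_set_mesh}) and the comparison functions are smooth.
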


Next, we prove the following auxiliary result that is needed in the proof of Theorem \ref{thm_main}.

\begin{lemma}\label{lem_aux_piv}
Let $\tau\in[0,T]$ be arbitrary. Further, let $\fatu_{\mathcal{V}}$, $\{\fatu_h\}_{h\,\downarrow\,0}$, and $C$ be the same as in the proof of Theorem $\mathrm{\ref{thm_main}}$. Then
\begin{align*}
    \liminf_{h\,\downarrow\,0}\left(\,\inttinto |\fatu_h-\pivh\fatu_{\mathcal{V}}|^2\;\dxdt\right) \leq 4C^{\,2} \liminf_{h\,\downarrow\,0}\left(\,\inttinto |\gradh\fatu_h-\gradh\pivh\fatu_{\mathcal{V}}|^2\;\dxdt\right)\,.
\end{align*}
\end{lemma}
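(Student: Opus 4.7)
The approach rests on the discrete Poincaré inequality (\ref{sobolev_C}) and a density argument. The obstacle is that, although $\fatu_{\mathcal{V}}(t,\cdot)\in W^{1,2}_0(\Omega)^d$, its Crouzeix--Raviart projection $\pivh\fatu_{\mathcal{V}}$ does \emph{not} in general belong to $\bm{V}_{0,h}$: the defining averages of $\pivh$ on the exterior faces $\sigma\subset\poh$ need not vanish, since $\poh$ lies strictly inside $\Omega$ and $\fatu_{\mathcal{V}}$ need not be zero there. Consequently (\ref{sobolev_C}) cannot be applied directly to $\fatu_h-\pivh\fatu_{\mathcal{V}}$, and this is the main technical difficulty.

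To circumvent this, I would introduce a sequence $(\fatu^{(\eta)})_{\eta\,\downarrow\,0}\subset \Lp{2}{0mm}(0,T;\Cinftyc(\Omega)^d)$ with $\fatu^{(\eta)}\to\fatu_{\mathcal{V}}$ in $\Lp{2}{0mm}(0,T;W^{1,2}_0(\Omega)^d)$. For each fixed $\eta$, the spatial support of $\fatu^{(\eta)}(t,\cdot)$ is contained in a common compact subset $\mathcal{K}_\eta\subset\Omega$. Property (\ref{comp_set_mesh}) then guarantees that $\mathcal{K}_\eta\subset\Omega_h$ for all sufficiently small $h$; since $\pivh\fatu^{(\eta)}|_K=\bm{0}$ on every element $K$ that does not meet the support of $\fatu^{(\eta)}$, it follows that the projection vanishes on a strip around $\poh$, and therefore $\pivh\fatu^{(\eta)}(t,\cdot)\in\bm{V}_{0,h}$ for a.e.~$t\in(0,T)$. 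Because $\fatu_h\in\bm{V}_{0,h}$ as well, the difference $\fatu_h-\pivh\fatu^{(\eta)}$ lies in $\bm{V}_{0,h}$, so (\ref{sobolev_C}) applied pointwise in time and integrated over $(0,\tau)$ yields
\begin{equation*}
\|\fatu_h-\pivh\fatu^{(\eta)}\|_{L^2}^{\,2}\;\leq\;C^{\,2}\,\|\gradh(\fatu_h-\pivh\fatu^{(\eta)})\|_{L^2}^{\,2},
\end{equation*}
where all norms refer to $(0,\tau)\times\Omega$.

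The remainder is a bookkeeping exercise. Using $\|a-b\|^2\leq 2\|a-c\|^2+2\|c-b\|^2$ twice (first with $c=\pivh\fatu^{(\eta)}$ for the function itself, then for its broken gradient) I would obtain
\begin{equation*}
\|\fatu_h-\pivh\fatu_{\mathcal{V}}\|^2\leq 4C^{\,2}\|\gradh(\fatu_h-\pivh\fatu_{\mathcal{V}})\|^2 + 4C^{\,2}\|\gradh\pivh(\fatu_{\mathcal{V}}-\fatu^{(\eta)})\|^2 + 2\|\pivh(\fatu^{(\eta)}-\fatu_{\mathcal{V}})\|^2.
\end{equation*}
Taking $\liminf$ as $h\downarrow 0$ with $\eta$ fixed, Lemma~\ref{lem_conv_piq_piv}(ii)--(iii) applied slicewise in time, combined with Remark~\ref{rem_proj_cont} and the dominated convergence theorem, shows that the last two terms converge to $4C^{\,2}\|\gradx(\fatu_{\mathcal{V}}-\fatu^{(\eta)})\|_{L^2}^{\,2}+2\|\fatu^{(\eta)}-\fatu_{\mathcal{V}}\|_{L^2}^{\,2}$. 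Finally, letting $\eta\downarrow 0$ kills both of these remainder terms by the chosen strong convergence $\fatu^{(\eta)}\to\fatu_{\mathcal{V}}$ in $\Lp{2}{0mm}(0,T;W^{1,2}_0(\Omega)^d)$, leaving precisely the asserted inequality with the universal factor $4C^{\,2}$.
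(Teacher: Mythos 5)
Your argument is correct and follows essentially the same route as the paper: approximate $\fatu_{\mathcal{V}}$ by compactly supported smooth functions so that the Crouzeix--Raviart projection lands in $\bm{V}_{0,h}$, apply the discrete Poincar\'e inequality (\ref{sobolev_C}) to $\fatu_h-\pivh\fatu^{(\eta)}$, and absorb the commutation errors via Lemma \ref{lem_conv_piq_piv} before sending the approximation parameter to zero. The only differences are cosmetic (a sequence $\fatu^{(\eta)}$ instead of a single $\barepsilon$-good $\bm{\phi}$), and your explicit justification via (\ref{comp_set_mesh}) of why $\pivh\fatu^{(\eta)}\in\bm{V}_{0,h}$ for small $h$ is a point the paper leaves implicit.
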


\begin{proof}
Let $\barepsilon>0$ be arbitrary. Further, let $\bm{\phi}\in\Lp{2}{0mm}(0,T;\Cinftyc(\Omega)^d)\subset \Lp{2}{0mm}(0,T;W^{1,2}_0(\Omega)^d)$ be a function satisfying
\begin{align*}
    4C^{\,2}\,\explnorm{\bm{\phi}-\fatu_{\mathcal{V}}}{2}{(0,T;W^{1,2}_0(\Omega)^d)}{0.55}{-0.5mm}{0.5mm}{2} \leq \barepsilon\,.
\end{align*}
Using (\ref{sobolev_C}), we deduce that
\begin{align*}
    & \inttinto |\fatu_h-\pivh\fatu_{\mathcal{V}}|^2\;\dxdt \leq 2\inttinto \left(|\fatu_h-\pivh\bm{\phi}|^2 + |\pivh(\bm{\phi}-\fatu_{\mathcal{V}})|^2\right)\dxdt \notag \\[2mm]
    & \leq 2C^{\,2}\inttinto \left(|\gradh\fatu_h-\gradh\pivh\bm{\phi}|^2 + |\pivh(\bm{\phi}-\fatu_{\mathcal{V}})|^2\right)\dxdt \notag \\[2mm]
    & \leq 4C^{\,2}\inttinto \left(|\gradh\fatu_h-\gradh\pivh\fatu_{\mathcal{V}}|^2 + |\gradh\pivh(\fatu_{\mathcal{V}}-\bm{\phi})|^2 + |\pivh(\bm{\phi}-\fatu_{\mathcal{V}})|^2\right)\dxdt\,,
\end{align*}
provided $h$ is sufficiently small. Therefore, an application of Lemma \ref{lem_conv_piq_piv}(\hyperlink{lem_conv_piq_piv_ii}{ii}), (\hyperlink{lem_conv_piq_piv_iii}{iii}) yields
\begin{align*}
    &\liminf_{h\,\downarrow\,0}\left(\,\inttinto |\fatu_h-\pivh\fatu_{\mathcal{V}}|^2\;\dxdt\right) \notag \\[2mm]
    & \quad \leq 4C^{\,2} \left[\liminf_{h\,\downarrow\,0}\left(\,\inttinto |\gradh\fatu_h-\gradh\pivh\fatu_{\mathcal{V}}|^2\;\dxdt\right) + \explnorm{\bm{\phi}-\fatu_{\mathcal{V}}}{2}{(0,T;W^{1,2}_0(\Omega)^d)}{0.55}{-0.5mm}{0.5mm}{2}\right] \notag \\[2mm]
    & \quad \leq 4C^{\,2} \liminf_{h\,\downarrow\,0}\left(\,\inttinto |\gradh\fatu_h-\gradh\pivh\fatu_{\mathcal{V}}|^2\;\dxdt\right) + \barepsilon\,.
\end{align*}
Since $\barepsilon>0$ was chosen arbitrarily, the desired result follows.
\end{proof}

\subsection{Properties of the numerical scheme}\label{app_solv}
In this section, we present a proof of Lemma \ref{lem_sol} that is based on the following lemma.

\begin{lemma}[{\citep[Theorem A.1]{Gallouet_Maltese_Novotny}}]\label{lem_fix}
Let $M,N$ be natural numbers, $C_1>\alpha>0$ and $C_2>0$ real numbers, and
\begin{align*}
    V &= \big\{(\fatr,\fatv)\in\reals^N\times\reals^M\,\big|\,\fatr > 0 \;\; \mathrm{componentwise}\big\}\,, \\[2mm]
    W &= \big\{(\fatr,\fatv)\in\reals^N\times\reals^M\,\big|\,\alpha < \fatr < C_1 \;\;\mathrm{componentwise \; and} \;\; |\fatv| < C_2\big\}\,.
\end{align*}
Further, let $\bm{F}:V\times[0,1]\to\reals^N\times\reals^M$ be a continuous function that complies with the following conditions: \hypertarget{lem_i}{}
\begin{enumerate}
    \item[$\mathrm{(i)}$]{If $\fatf\in V$ satisfies $\bm{F}(\fatf,\zeta)=(\bm{0},\bm{0})$ for some $\zeta\in[0,1]$, then $\fatf\in W$.} \hypertarget{lem_ii}{}
    \item[$\mathrm{(ii)}$]{The equation $\bm{F}(\fatf,0)=(\bm{0},\bm{0})$ is a linear system with respect to $\fatf$ and admits a solution in $W$.}
\end{enumerate}
Then there is $\fatf\in W$ such that $\bm{F}(\fatf,1)=(\bm{0},\bm{0})$.
\end{lemma}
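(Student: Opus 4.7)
The plan is to apply homotopy invariance of the Brouwer topological degree. The hypotheses are tailor-made for a finite-dimensional Leray--Schauder continuation argument on the bounded open set $W \subset \reals^{N+M}$, with the homotopy parameter $\zeta$ running from $0$ (the linear system, where the degree can be computed explicitly) to $1$ (the nonlinear system whose solvability is sought).

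First I would observe that $W$ is a bounded open subset of $V$ whose closure $\overline{W}$ is compactly contained in $V$, thanks to $\alpha > 0$ and $C_1, C_2 < \infty$. Hypothesis (\hyperlink{lem_i}{i}) then guarantees that for every $\zeta \in [0,1]$ the map $f \mapsto \bm{F}(f,\zeta)$ has no zero on $\partial W$: any zero in $V$ is forced to lie in the open set $W$. Consequently, the Brouwer degree $d(\zeta) := \deg(\bm{F}(\cdot,\zeta), W, \mathbf{0})$ is well defined for every $\zeta$, and by the continuity of $\bm{F}$ in both arguments together with homotopy invariance, the integer-valued map $\zeta \mapsto d(\zeta)$ is constant on $[0,1]$.

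Next I would evaluate $d(0)$. By hypothesis (\hyperlink{lem_ii}{ii}) the map $\bm{F}(\cdot,0)$ is affine, so write $\bm{F}(f,0) = \mathbb{A} f - \mathbf{b}$ for some $\mathbb{A}\in\reals^{(N+M)\times(N+M)}$ and $\mathbf{b}\in\reals^{N+M}$. By assumption there exists at least one solution of $\bm{F}(\cdot,0)=\mathbf{0}$ in $W$, and by (\hyperlink{lem_i}{i}) every solution in $V$ already lies in the bounded set $W$. An affine system whose solution set is nonempty and bounded must have a unique solution, which forces $\mathbb{A}$ to be invertible; hence $d(0) = \mathrm{sgn}(\det \mathbb{A}) = \pm 1 \neq 0$. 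Homotopy invariance then yields $d(1) = d(0) \neq 0$, and the solution property of the Brouwer degree produces some $f \in W$ with $\bm{F}(f,1) = \mathbf{0}$.

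The only genuinely delicate point is the invertibility of $\mathbb{A}$: one has to exploit that boundedness of the affine solution set in $V$ forces uniqueness, which is exactly where hypotheses (\hyperlink{lem_i}{i}) and (\hyperlink{lem_ii}{ii}) must be combined. An alternative, if one prefers to avoid invoking the degree machinery directly, would be to run a Schaefer-type fixed point argument: rewrite $\bm{F}(\cdot,\zeta) = 0$ as $f = T(f,\zeta)$ on $\overline{W}$, use (\hyperlink{lem_i}{i}) to obtain uniform a priori bounds for solutions along the homotopy, and invoke Schaefer's/Leray--Schauder's theorem. Either route is essentially standard once the boundary condition from (\hyperlink{lem_i}{i}) has been extracted; the continuous dependence on $\zeta$ and the finite-dimensional setting do all the heavy lifting.
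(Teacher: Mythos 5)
Your degree-theoretic argument is correct and is essentially the canonical proof of this continuation lemma; note that the paper itself does not prove the statement but imports it from the cited reference (Theorem A.1 of Gallou\"{e}t--Maltese--Novotn\'{y}), where the same Brouwer-degree homotopy argument is carried out, so you have in effect reproduced the reference's proof rather than deviated from the paper. The one step worth tightening is the invertibility of $\mathbb{A}$: hypothesis (i) only confines the solutions that lie in $V$, and a nonempty affine solution set whose intersection with $V$ is bounded need not a priori be a singleton, so the phrase ``boundedness forces uniqueness'' skips a beat. The clean way to close it uses that $\overline{W}\subset V$ because $\alpha>0$: if $\ker\mathbb{A}$ contained some $v\neq 0$, then starting from the solution $\fatf_0\in W$ guaranteed by (ii) and moving along $t\mapsto \fatf_0+tv$, one exits the bounded open set $W$ at a first time $t^{*}\in(0,\infty)$, producing a zero of $\bm{F}(\cdot,0)$ at a point of $\partial W\subset V\setminus W$, which contradicts (i). Hence $\det\mathbb{A}\neq 0$, $d(0)=\mathrm{sgn}(\det\mathbb{A})=\pm 1$, and homotopy invariance (legitimate since $\bm{F}$ is continuous on $\overline{W}\times[0,1]$ and, by (i), nonvanishing on $\partial W\times[0,1]$) gives $d(1)\neq 0$ and the desired zero in $W$.
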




The proof of Lemma \ref{lem_sol} is done in two steps.
\begin{proof}[Proof of Lemma $\mathbf{\ref{lem_sol}}(\hyperlink{lem_sol_i}{\mathbf{i}})$]\label{proof_lem_sol_i}
We start by showing that, given
$(\varrho_h^{k-1},Z_h^{k-1},\fatu_h^{k-1})\in Q_h^+\times Q_h^+\times \bm{V}_{h}$, there is $(\varrho_h^{k},Z_h^{k},\fatu_h^{k})\in Q_h^+\times Q_h^+\times \bm{V}_{0,h}$ such that
\begin{align}
    \intoh (\Dt\varrho^k_h)\,\phi_h\;\dx - \int_{\mathcal{E}_{\mathrm{int}}}\up{\varrho^k_h}{\fatu_h^k}\jump{\phi_h}\;\dsx &= 0\,, \label{r_method_z} \\[2mm]
    \intoh (\Dt Z^k_h)\,\phi_h\;\dx - \int_{\mathcal{E}_{\mathrm{int}}}\up{Z^k_h}{\fatu_h^k}\jump{\phi_h}\;\dsx &= 0\,, \label{z_method_z} \\[2mm]
    \intoh \Dt\!\left(\varrho_h^k\overline{\fatu_h^k}\,\right)\cdot\bm{\phi}_h\;\dx - \int_{\mathcal{E}_{\mathrm{int}}}\up{\varrho^k_h\overline{\fatu_h^k}}{\fatu_h^k}\cdot\Jump{\overline{\bm{\phi}_h}\vphantom{\overline{\overline{\phi}}}}\dsx + \mu\intoh \gradh\fatu_h^k:\gradh\bm{\phi}_h\;\dx & \phantom{.} \notag \\[2mm]
    {}+\nu\intoh \divh(\fatu_h^k)\,\divh(\bm{\phi}_h)\;\dx -\intoh \left(\p(Z^k_h)+\hd\!\left[(\varrho^k_h)^2+(Z^k_h)^2\right]\right)\divh(\bm{\phi}_h)\;\dx &= 0 \label{m_method_z}
\end{align}
for all $\phi_h\in Q_h$ and $\bm{\phi}_h\in\bm{V}_{0,h}$. The proof of this fact is essentially identical to that of \citep[Lemma 11.3]{Feireisl_Lukacova_Book}. In order to be able to apply Lemma \ref{lem_fix}, we set
\begin{align*}
    V &= \big\{((\varrho_h^k,Z_h^k),\fatu_h^k)\in Q_h^2\times \bm{V}_{0,h}\,\big|\,\varrho_h^k,Z_h^k>0 \;\;\text{in $\Omega_h$}\big\} \\[2mm]
    \text{and} \qquad  W &= \big\{((\varrho_h^k,Z_h^k),\fatu_h^k)\in Q_h^2\times \bm{V}_{0,h}\,\big|\,\alpha<\varrho_h^k,Z_h^k<C_1 \;\;\text{in $\Omega_h$ and}\;\; ||\fatu^k_h||<C_2\big\}\,,
\end{align*}
where $||\fatu^k_h||\equiv\lnorm{\gradh\fatu^k_h}{2}{(\Omega_h)^{d\times d}}{0.55}{-0.5mm}$ and the numbers $\alpha,C_1,C_2$ are yet to be determined. Clearly, we can construe $Q_h^2$ as a subset of $\reals^{2N}$ and $\bm{V}_{0,h}$ as a subset of $\reals^{dM}$, where $N$ is the number of tetrahedra (triangles) and $M$ the number of inner faces (edges) of the mesh $\mathcal{T}_h$. Next, we define the continuous map
\begin{align*}
    \bm{F}:V\times[0,1]\to Q_h^2\times\bm{V}_{0,h} \qquad \text{via} \qquad (((\varrho_h^k,Z_h^k),\fatu^k_h),\zeta)\mapsto ((\varrho^\star,Z^\star),\fatu^\star)\,,
\end{align*}
where $((\varrho^\star,Z^\star),\fatu^\star)$ is the uniquely determined element of $Q_h^2\times\bm{V}_{0,h}$ satisfying
\begin{align*}
    \intoh \varrho^\star\phi_h\;\dx &= \intoh (\Dt\varrho^k_h)\,\phi_h\;\dx - \zeta\inteint \up{\varrho^k_h}{\fatu^k_h}\,\jump{\phi_h}\;\dsx\,,  \\[2mm]
    \intoh Z^\star\phi_h\;\dx &= \intoh (\Dt Z_h^k)\,\phi_h\;\dx - \zeta\inteint \up{Z^k_h}{\fatu_h^k}\,\jump{\phi_h}\;\dsx\,, \\[2mm]
    \intoh \fatu^\star\cdot\bm{\phi}_h\;\dx &= \mu\intoh\gradh\fatu_h^k:\gradh\bm{\phi}_h\;\dx + \zeta\,\nu\intoh\divh(\fatu_h^k)\,\divh(\bm{\phi}_h)\;\dx \notag \\[2mm]
    &\phantom{\;=\;} + \intoh\Dt\big(\varrho_h^k\overline{\fatu_h^k}\,\big)\cdot\bm{\phi}_h\;\dx - \zeta\inteint\up{\varrho^k_h\overline{\fatu_h^k}}{\fatu_h^k}\cdot\Jump{\overline{\bm{\phi}_h}\vphantom{\overline{\overline{\phi}}}}\dsx \notag \\[2mm]
    &\phantom{\;=\;}
     - \zeta\intoh\left[\p(Z^k_h)+\hd\big((\varrho_h^k)^2+(Z_h^k)^2\big)\right]\divh(\bm{\phi}_h)\;\dx
\end{align*}
for all $\phi_h\in Q_h$ and $\bm{\phi}_h\in\bm{V}_{0,h}$. To show that $\bm{F}$ satisfies assumption (\hyperlink{lem_i}{i}) of Lemma \ref{lem_fix}, we suppose that $((\varrho^k_h,Z_h^k),\fatu_h^k)\in V$ solves $\bm{F}(((\varrho_h^k,Z_h^k),\fatu_h^k),\zeta)=(\bm{0},\bm{0})$ for some $\zeta\in[0,1]$, i.e.
\begin{align}
    0 &= \intoh (\Dt\varrho^k_h)\,\phi_h\;\dx - \zeta\inteint \up{\varrho^k_h}{\fatu^k_h}\,\jump{\phi_h}\;\dsx\,, \label{z0c_method} \\[2mm]
    0 &= \intoh (\Dt Z_h^k)\,\phi_h\;\dx - \zeta\inteint \up{Z^k_h}{\fatu_h^k}\,\jump{\phi_h}\;\dsx\,, \label{z0z_method} \\[2mm]
    0 &= \mu\intoh\gradh\fatu_h^k:\gradh\bm{\phi}_h\;\dx + \zeta\,\nu\intoh\divh(\fatu_h^k)\,\divh(\bm{\phi}_h)\;\dx + \intoh\Dt\big(\varrho_h^k\overline{\fatu_h^k}\,\big)\cdot\bm{\phi}_h\;\dx \notag \\[2mm]
    &\phantom{\;=\;} - \zeta\inteint\up{\varrho^k_h\overline{\fatu_h^k}}{\fatu_h^k}\cdot\Jump{\overline{\bm{\phi}_h}\vphantom{\overline{\overline{\phi}}}}\dsx - \zeta\intoh\left[\p(Z^k_h)+\hd\big((\varrho_h^k)^2+(Z_h^k)^2\big)\right]\divh(\bm{\phi}_h)\;\dx \label{z0m_method}
\end{align}
for all $\phi_h\in Q_h$ and $\bm{\phi}_h\in\bm{V}_{0,h}$. Adapting and repeating the arguments from Section \ref{sec_stability} to derive the energy estimates,
we deduce that
\begin{align}
    ||\fatu_h^k|| < C_2 \equiv C_2(\varrho_h^{k-1},Z_h^{k-1},\fatu_h^{k-1})\,. \label{bound_grad}
\end{align}
Next, we choose $K\in\mathcal{T}_h$ such that $(\varrho_h^k)_K = \min_{R\,\in\,\mathcal{T}_h}\{(\varrho_h^k)_R\}$. Taking $\phi_h=\mathds{1}_K$ in (\ref{z0c_method}), leads to
\begin{align*}
    &|K|\big((\varrho_h^k)_K-(\varrho_h^{k-1})_K\big) \notag \\[2mm]
    & \quad = \zeta\deltat\sum_{\sigma\,\in\,\mathcal{E}_{\mathrm{int}}(K)}\intg \left((\varrho_h^k)^{\mathrm{out}}\big[\smallgmean{\fatu_h^k\cdot\ngamma}\big]^- + (\varrho_h^k)^{\mathrm{in}}\big[\smallgmean{\fatu_h^k\cdot\ngamma}\big]^+ -\frac{\he}{2}\,\jump{\varrho_h^k}\right)\jump{\mathds{1}_K}\;\dsx \notag \\[2mm]
    & \quad \geq \zeta\deltat\sum_{\sigma\,\in\,\mathcal{E}(K)}\intg\left(\big((\varrho_h^k)^{\mathrm{out}}-(\varrho_h^k)_K\big)\big[\smallgmean{\fatu_h^k\cdot\ngamma}\big]^- + \big((\varrho_h^k)^{\mathrm{in}}-(\varrho_h^k)_K\big)\big[\smallgmean{\fatu_h^k\cdot\ngamma}\big]^+\right)\jump{\mathds{1}_K}\;\dsx \notag \\[2mm]
    & \quad\phantom{\;=\;} + \zeta\deltat \sum_{\sigma\,\in\,\mathcal{E}(K)}\intg (\varrho_h^k)_K\,\smallgmean{\fatu_h^k\cdot\ngamma}\,\jump{\mathds{1}_K}\;\dsx \notag \\[2mm]
    & \quad \geq \zeta\deltat \sum_{\sigma\,\in\,\mathcal{E}(K)}\intg (\varrho_h^k)_K\,(\fatu_h^k\cdot\fatn_K)\;\dsx = -|K|\zeta\deltat \big(\varrho_h^k\,\divh(\fatu_h^k)\big)_{\!K} \geq -|K|\zeta\deltat \big(\varrho_h^k\,|\divh(\fatu^k_h)|\big)_{\!K}\,. 
\end{align*}
Consequently,
 $\varrho_h^k\geq (\varrho_h^k)_K\geq \frac{(\varrho_h^{k-1})_K}{1+\zeta\deltat\,|(\div_{h}(\fatu^k_h))_K|}$ in $\Omega_h$
and, similarly,
$Z_h^k\geq (Z_h^k)_L\geq \frac{(Z_h^{k-1})_L}{1+\zeta\deltat\,|(\div_h(\fatu_h^k))_L|} $ in $\Omega_h$,
where $L\in\mathcal{T}_h$ is chosen in such a way that $(Z_h^k)_L = \min_{R\,\in\,\mathcal{T}_h}\{(Z_h^k)_R\}$. In view of (\ref{bound_grad}), we can find a constant $\alpha\equiv\alpha(\varrho_h^{k-1},Z_h^{k-1},\fatu_h^{k-1})>0$ such that $\varrho_h^{k-1},Z_h^{k-1},\varrho_h^k,Z_h^k>\alpha$ in $\Omega_h$. Finally,
taking $\phi_h=\mathds{1}_{\overline{\Omega_h}}$ in (\ref{z0c_method}) yields
\begin{align*}
    \lnorm{\varrho_h^k}{1}{(\Omega_h)}{0.55}{-0.5mm} &= \intoh \varrho_h^k\;\dx = \intoh \varrho_h^{k-1}\;\dx \equiv M_{0,\varrho} > 0.
\end{align*}
Thus, we have
\begin{align*}
    \varrho_h^k \leq \frac{M_{0,\varrho}}{\min_{R\,\in\,\mathcal{T}_h}\{|R|\}}\quad \mbox{ in $\Omega_h$ and, analogously, }\quad
      Z_h^k \leq \frac{M_{0,Z}}{\min_{R\,\in\,\mathcal{T}_h}\{|R|\}}
    \quad \text{in $\Omega_h$.}
\end{align*}
Consequently, there is a constant $C_1\equiv C_1(\varrho_h^{k-1},Z_h^{k-1},\fatu_h^{k-1})>0$ such that $\varrho_h^{k-1},Z_h^{k-1},\varrho_h^k,Z_h^k<C_1$ in $\Omega_h$. Therefore, $\bm{F}$ fulfills assumption (\hyperlink{lem_i}{i}) of Lemma \ref{lem_fix}. We proceed by proving that $\bm{F}$ satisfies assumption (\hyperlink{lem_ii}{ii}) of Lemma \ref{lem_fix}. To this end, we consider the equation $\bm{F}(((\varrho_h^k,Z_h^k),\fatu_h^k),0)=(\bm{0},\bm{0})$ that can be written as
\begin{align*}
    (\varrho_h^k,Z_h^k) &= (\varrho_h^{k-1},Z_h^{k-1})\,, \\[2mm]
    0 &= \mu\intoh \gradh\fatu_h^k:\gradh\bm{\phi}_h\;\dx + \intoh \varrho_h^{k-1}\,\frac{\overline{\fatu_h^k}-\overline{\fatu_h^{k-1}}}{\deltat}\cdot\bm{\phi}_h\;\dx \qquad \text{for all $\bm{\phi}_h\in\bm{V}_{0,h}$.}
\end{align*}
Obviously, this is a linear system for $((\varrho_h^k,Z_h^k),\fatu_h^k)$ with 
a positive definite matrix. Thus, the equation $\bm{F}(((\varrho_h^k,Z_h^k),\fatu_h^k),0)=(\bm{0},\bm{0})$ has a unique solution.
Therefore, $\bm{F}$ also satisfies assumption (\hyperlink{lem_ii}{ii}) of Lemma \ref{lem_fix} and the existence of a solution $(\varrho_h^k,Z_h^k,\fatu_h^k)\in Q_h^+\times Q_h^+\times \bm{V}_{0,h}$ to (\ref{r_method_z})--(\ref{m_method_z}) follows from Lemma \ref{lem_fix}.

Finally, given $(\varrho_h^{k-1},\thetakmh,\fatu_h^{k-1})\in Q_h^+\times Q_h^+\times \bm{V}_h$, we set $Z_h^{k-1}=\varrho_h^{k-1}\thetakmh\in Q_h^+$, find a solution $(\varrho_h^k,Z_h^k,\fatu_h^k)\in Q_h^+\times Q_h^+\times \bm{V}_{0,h}$ to (\ref{r_method_z})--(\ref{m_method_z}), and observe that
$(\varrho_h^k,\thetakh,\fatu_h^k)\in Q_h^+\times Q_h^+\times \bm{V}_{h,0}$, where
\begin{align*}
    (\thetakh)_R = \frac{(Z_h^k)_R}{(\varrho_h^k)_R} \quad \text{for all $R\in\mathcal{T}_h$,}
\end{align*}
is a solution to (\ref{r_method})--(\ref{m_method}).
\end{proof}

\begin{proof}[Proof of Lemma $\mathbf{\ref{lem_sol}}(\hyperlink{lem_sol_ii}{\mathbf{ii}})$]\label{proof_lem_sol_ii}
Suppose the triplet $(r_h^{k-1},r_h^k,\fatu_h^k)\in Q_h^+\times Q_h\times \bm{V}_{0,h}$ satisfies
\begin{align*}
    \intoh (\Dt r_h^k)\,\phi_h\;\dx - \inteint \up{r^k_h}{\fatu_h^k}\,\jump{\phi_h}\;\dsx = 0 \qquad \text{for all $\phi_h\in Q_h$.}
\end{align*}
Then \citep[Chapter 7.6, Lemma 6]{Feireisl_Karper_Pokorny} shows that $r^k_h\in Q_h^+$. The desired conclusions follow by applying this observation with
\begin{align*}
    (r_h^{k-1},r_h^k) &\in \big\{(\varrho_h^{k-1},\varrho_h^k),(\varrho_h^{k-1}\thetakmh-\underline{c}\varrho_h^{k-1},\varrho_h^k\thetakh-\underline{c}\varrho^k_h)\big\} \notag \\[2mm]
    &\quad \cup \big\{(\varrho_h^{k-1}\thetakmh,\varrho_h^k\thetakh),(\overline{c}\varrho_h^{k-1}-\varrho_h^{k-1}\thetakmh,\overline{c}\varrho_h^k-\varrho_h^k\thetakh)\big\}\,.
\end{align*}
\end{proof}

\subsection{Stability estimates}\label{app_stab}
The aim of this section is to provide the reader with a proof of Corollary \ref{cor_stab}.
\begin{proof}[Proof of Corollary $\mathbf{\ref{cor_stab}}$]
To begin with, we observe that $0\leq E_h^k\leq E_h^{k-1}$ for all $k\in\naturals$. This follows from the fact that the second term on the left-hand side of (\ref{disc_energy}) is nonnegative and all terms on the right-hand side are nonpositive. Here, the nonpositivity of the terms on the right-hand side is ensured by the convexity of the pressure potential $P$. Moreover, employing Hölder's inequality and Remark \ref{rem_proj_cont}, we see that
\begin{align*}
    E_h^0 &= \intoh \left[\,\frac{1}{2}\,\varrho_h^0|\overline{\fatu_h^0}|^2+P(\varrho_h^0\thetazeroh)+\hd\big((\varrho_h^0)^2+(\varrho_h^0\thetazeroh)^2\big)\right]\dx \notag \\[2mm]
    &\lesssim \lnorm{\varrho_h^0}{\infty}{(\Omega_h)}{0.55}{-0.0mm}\,\explnorm{\overline{\fatu_h^0}}{2}{(\Omega_h)^d}{0.55}{-0.5mm}{0.5mm}{2} + \explnorm{\varrho_h^0}{\infty}{(\Omega_h)}{0.55}{0.0mm}{0.5mm}{\gamma}\,\explnorm{\thetazeroh}{\gamma}{(\Omega_h)}{0.55}{0.0mm}{0.5mm}{\gamma} + \hd\big(\explnorm{\varrho_h^0}{2}{(\Omega_h)}{0.55}{-0.5mm}{0.5mm}{2}+\explnorm{\varrho_h^0}{\infty}{(\Omega_h)}{0.55}{-0.0mm}{0.5mm}{2}\,\explnorm{\thetazeroh}{2}{(\Omega_h)}{0.55}{-0.5mm}{0.5mm}{2}\big) \notag \\[2mm]
    &\lesssim \lnorm{\varrho_0}{\infty}{(\Omega)}{0.55}{-0.0mm}\,\expwnorm{\fatu_0}{1,\!2}{(\Omega)^d}{0.55}{-0.5mm}{0.5mm}{2} + \explnorm{\varrho_0}{\infty}{(\Omega)}{0.55}{0.0mm}{0.5mm}{\gamma}\,\explnorm{\thetazero}{\gamma}{(\Omega)}{0.55}{0.0mm}{0.5mm}{\gamma} + \hd\big(\explnorm{\varrho_0}{2}{(\Omega)}{0.55}{-0.5mm}{0.5mm}{2}+\explnorm{\varrho_0}{\infty}{(\Omega)}{0.55}{-0.0mm}{0.5mm}{2}\,\explnorm{\thetazero}{2}{(\Omega)}{0.55}{-0.5mm}{0.5mm}{2}\big) 
    \lesssim 1.
\end{align*}
Using this observation, it is easy to establish the first estimate in (\ref{energy_est_1}), the first two estimates in (\ref{energy_est_2}), the estimates in (\ref{energy_est_3}), and the estimates (\ref{energy_est_5})--(\ref{energy_est_8}). Then, due to Corollary \ref{cor_sol}(\hyperlink{cor_ii}{i}), the second estimate in (\ref{energy_est_1}) follows from the first estimate in (\ref{energy_est_3}). Next, applying Hölder's inequality, we observe that
\begin{align*}
    & \lnorm{\varrho_h\overline{\fatu_h}}{\infty}{(0,T;\Lp{2\gamma/(\gamma+1)}{0.0mm}(\Omega_h)^d)}{0.55}{0.0mm} \notag \\[2mm]
    & \qquad = \sup_{k\,\in\,\{1,\dots,N_T\}}\left\{\lnorm{\varrho_h^k\overline{\fatu_h^k}}{2\gamma/(\gamma+1)}{(\Omega_h)^d}{0.55}{-0.5mm}\right\} 
    \leq \sup_{k\,\in\,\{1,\dots,N_T\}}\left\{\explnorm{\varrho_h^k}{\gamma}{(\Omega_h)}{0.55}{-0.2mm}{0.5mm}{1/2}\,\explnorm{\varrho_h^k|\overline{\fatu_h^k}|^2}{1}{(\Omega_h)}{0.55}{-0.5mm}{0.5mm}{1/2}\right\} \notag \\[2mm]
    & \qquad \leq \left(\,\sup_{k\,\in\,\{1,\dots,N_T\}}\left\{\lnorm{\varrho_h^k}{\gamma}{(\Omega_h)}{0.55}{0.0mm}\right\}\sup_{k\,\in\,\{1,\dots,N_T\}}\left\{\lnorm{\varrho_h^k|\overline{\fatu_h^k}|^2}{1}{(\Omega_h)}{0.55}{-0.5mm}\right\}\right)^{1/2} \notag \\[2mm]
    & \qquad \leq \left(\lnorm{\varrho_h}{\infty}{(0,T;\Lp{\gamma}{0.5mm}(\Omega_h))}{0.55}{0.0mm}\,\lnorm{\varrho_h|\overline{\fatu_h}|^2}{\infty}{(0,T;\Lp{1}{0.0mm}(\Omega_h))}{0.55}{0.0mm}\right)^{1/2}\,.
\end{align*}
Consequently, the last estimate in (\ref{energy_est_1}) follows from the first two. Furthermore, an application of Poincaré's inequality (\ref{sobolev}) reveals that the last estimate in (\ref{energy_est_2}) is a consequence of the first. Due to Corollary \ref{cor_sol}(\hyperlink{cor_ii}{i}), the validity of the first estimate in (\ref{energy_est_4}) results from the third estimate in (\ref{energy_est_1}). Using Hölder's inequality and the second estimate in (\ref{trace_neg_lp}), we deduce that
\begin{align*}
    & \lnorm{\varrho_h\overline{\fatu_h}}{2}{(0,T;\Lp{2}{0.0mm}(\Omega_h)^d)}{0.55}{-0.5mm} \notag \\[2mm]
    & \qquad = \left(\intt\lnorm{(\varrho_h^2|\overline{\fatu_h}|^2)(t,\cdot)}{1}{(\Omega_h)}{0.55}{-0.5mm}\;\dt\right)^{1/2} \lesssim \left(\intt \explnorm{\varrho_h(t,\cdot)}{3}{(\Omega_h)}{0.55}{-0.5mm}{0.5mm}{2}\,\explnorm{\overline{\fatu_h}(t,\cdot)}{6}{(\Omega_h)^d}{0.55}{-0.5mm}{0.5mm}{2}\;\dt\right)^{1/2} \notag \\[2mm]
    & \qquad \lesssim h^{-\frac{d+3\delta}{6}}\left(\intt \explnorm{h^{\delta/2}\varrho_h(t,\cdot)}{2}{(\Omega_h)}{0.55}{-0.5mm}{0.5mm}{2}\,\explnorm{\overline{\fatu_h}(t,\cdot)}{6}{(\Omega_h)^d}{0.55}{-0.5mm}{0.5mm}{2}\;\dt\right)^{1/2} \notag \\[2mm]
    & \qquad \lesssim h^{-\frac{d+3\delta}{6}}\,\lnorm{h^{\delta/2}\varrho_h}{\infty}{(0,T;\Lp{2}{0.0mm}(\Omega_h))}{0.55}{0.0mm}\,\lnorm{\overline{\fatu_h}}{2}{(0,T;\Lp{6}{0.0mm}(\Omega_h)^d)}{0.55}{-0.5mm}\,.
\end{align*}
Therefore, the second estimate in (\ref{energy_est_4}) follows from the third estimate in (\ref{energy_est_2}), the second estimate in (\ref{energy_est_3}), and (\ref{bound_piq}). Finally, we combine Hölder's inequality, the estimates (\ref{est_vh_2}) and (\ref{energy_est_5}), the first estimate in (\ref{trace_neg_lp}), and the first and third estimate in (\ref{energy_est_2}) to conclude that
\begin{align*}
    &\intt\!\!\inteint \big|\jump{\varrho_h}\smallgmean{\fatu_h\cdot\ngamma}\big|\;\dsxdt \notag \\[2mm]
    & \qquad \lesssim h^{-\delta/2}\left(\hd\intt\!\!\inteint \jump{\varrho_h}^2\,|\smallgmean{\fatu_h\cdot\ngamma}|\;\dsxdt\right)^{1/2}\left(\intt\!\!\inteint |\smallgmean{\fatu_h}|\;\dsxdt\right)^{1/2} \notag \\[2mm]
    & \qquad \lesssim h^{-\delta/2}\left(\intt h^{-1}\left(\lnorm{\fatu_h(t,\cdot)}{1}{(\Omega_h)^d}{0.55}{-0.5mm}+h\,\lnorm{\gradh\fatu_h(t,\cdot)}{1}{(\Omega_h)^{d\times d}}{0.55}{-0.5mm}\right)\dt\right)^{1/2} \notag \\[2mm]
    & \qquad \lesssim h^{-\delta/2}\left(h^{-1}\lnorm{\fatu_h}{2}{(0,T;\Lp{2}{0.0mm}(\Omega_h)^d)}{0.55}{-0.5mm}+\lnorm{\gradh\fatu_h}{2}{(0,T;\Lp{2}{0.0mm}(\Omega_h)^{d\times d})}{0.55}{-0.5mm}\right)^{1/2} \lesssim h^{-\delta/2}(1+h^{-1/2})\,.
\end{align*}
We note in passing that estimate (\ref{energy_est_10}) can be proven in the same way.
\end{proof}

\end{document}